\newcommand{\doi}[1]{\url{http://dx.doi.org/#1}}
\newtheorem{theorem}{Theorem}[section]
\newtheorem{lemma}[theorem]{Lemma}
\newtheorem{proposition}[theorem]{Proposition}
\newtheorem{remark}[theorem]{Remark}
\newtheorem{definition}[theorem]{Definition}
\newcommand{\dcirc}{\dot{\circ}}
\newcommand{\R}{{\mathbb R}}
\newcommand{\mP}{\mathbb P}
\newcommand{\mE}{\mathbb E}
\newcommand{\cK}{{\cal K}}
\newcommand{\cW}{{\cal W}}
\newcommand{\cM}{\mathcal M}
\newcommand{\cN}{{\cal N}}
\newcommand{\cG}{{\cal G}}
\newcommand{\cT}{{\cal T}}
\newcommand{\cV}{\mathcal V}
\newcommand{\cL}{\mathcal L}
\newcommand{\inpro}[2]{\left\langle{#1},{#2}\right\rangle}
\newcommand{\inprod}[2]{\left\langle{#1},{#2}\right\rangle}
\newcommand{\sett}[1]{\left\{#1\right\}}
\newcommand{\norm}[2]{\left\|{#1}\right\|_{#2}}
\newcommand{\snorm}[2]{\left|{#1}\right|_{#2}}
\newcommand{\brac}[1]{\left(#1\right)}
\newcommand{\brap}[1]{\left[#1\right]}
\newcommand{\abs}[1]{\left|#1\right|}
\newcommand{\De}{\Delta}
\newcommand{\G}{\Gamma}
\newcommand{\Om}{\Omega}
\newcommand{\al}{\alpha}
\newcommand{\eps}{\epsilon}
\newcommand{\ga}{\gamma}
\newcommand{\om}{\omega}
\newcommand{\vecx}{\boldsymbol{x}}
\newcommand{\vecy}{\boldsymbol{y}}
\newcommand{\vecn}{\boldsymbol{n}}
\newcommand{\vecv}{\boldsymbol{v}}
\newcommand{\vx}{\boldsymbol{x}}
\newcommand{\vn}{\boldsymbol{n}}
\newcommand{\Corr}{{\rm{Cor}}}
\newcommand{\Covv}{{\rm{Covar}}}
\newcommand{\goto}{\rightarrow}
\newcommand{\wtd}{\widetilde}
\DeclareMathOperator{\supp}{{supp \/}}
\DeclareMathOperator{\divv}{{div \/}}
\DeclareMathOperator*{\esssup}{ess\,sup}
\title{A parabolic equation on domains with random 
boundaries}
\author{
% Duong Pham\thanks{ 
% Vietnamese German
% University, Le Lai street, Binh Duong New City, 
% Binh Duong Province, Vietnam,
% \texttt{ptduong01@gmail.com}}~
Duong Thanh Pham
\thanks{Vietnamese German University, Le Lai street, Binh Duong New City, 
Binh Duong Province, Vietnam} 
\and 
Thanh Tran
\thanks{School of Mathematics and Statistics, The University of New South Wales, 
Sydney 2052, Australia, \texttt{thanh.tran@unsw.edu.au}.}
\thanks{Both authors are supported by ARC DP160101755.}
}
\date{}
\makeatletter \@addtoreset{equation}{section}
\begin{document}

\maketitle

\begin{abstract}
A heat equation with uncertain domains
is thoroughly investigated. Statistical 
moments of the solution is approximated by the 
counterparts of the shape 
derivative. 
A rigorous proof for the existence of the shape derivative 
is presented. Boundary integral equation methods are used to compute 
statistical moments of the shape derivative. 
\end{abstract}

\section{Introduction}\label{s:Introduction}

Parabolic partial differential equations arise in a wide 
family of science, including heat diffusion, ocean acoustic 
propagation, physical and mathematical systems with a time 
variable, and processes having behaviour of heat diffusion 
through a solid. A typical example of parabolic partial 
differential equations is the heat equation that describes 
distribution of heat in a given region over time. Provided 
that the problem parameters are known exactly, 
nowadays powerful computers together with advanced 
numerical schemes are capable of producing highly accurate  
deterministic numerical solutions. 

However, in reality 
problem parameters are prone to uncertainty for many 
reasons. First, the parameters are often 
obtained through inexact measurements due to 
imperfect 
measurement devices. Second, the parameters are 
approximated from a large but finite number of system 
samples; this approximation can be incomplete or 
stochastic. 
Finally, mathematical problems are themselves only 
approximations of the actual processes. Under these 
circumstances, numerical results of a finite number of 
deterministic simulations with a finite set of problem 
parameters are of limited use. 
 An important
paradigm, becoming rapidly popular over the past
years, see e.g. 
\cite{BarSwbZol11,CohDVeSwb11,ChSwb13fokm,
CohDVeSwb10,
ForKor10,Git13adap,
HrbSndSwb08sm,BNobTmp07,GraKuoNuySclSlo11,SwbGit11,SwbTod06}
and the references therein, is to treat the lack of 
knowledge
via modelling uncertain parameters as random fields. 

In this paper we consider the following 
initial-boundary value problem 
\begin{subequations}\label{main1}
\begin{align}
u_t(\om) 
-\Delta u(\om) 
&
=
f
\quad 
\text{in }
Q_T(\om):= (0,T)\times U(\om),
\label{e:ut Delta u f}
\\
Bu(\om)
&
=
0
\quad 
\text{on }
\Sigma_T(\om):= (0,T)\times \partial U(\om)    
\\
u(\om)|_{t=0} 
&
=
g
\quad 
\text{in }
U(\om),
\label{e:u eps 0}
\end{align}
\end{subequations}
where $Bu(\om) = 0$ indicates 
either the Dirichlet boundary condition
\begin{equation}\label{Dir cond}
u(t,\vecx;\om) 
=
0,
\quad 
(t,\vecx)\in (0,T)\times \partial U(\om),
\end{equation}
or the Neumann boundary condition 
\begin{equation}\label{e:Neumann cond}
\frac{\partial u(t,\vecx;\om)}{\partial \vecn}
=
0,
\quad 
(t,\vecx)\in (0,T)\times\partial U(\om).
\end{equation}
Here, the domain $U(\om)$ and so does its 
boundary $\partial U(\om)$ depend on a 
``random event''
$\om\in\Om$,
where $(\Om,\Sigma,\mP)$ is a generic 
complete probability space.
In this paper, we shall estimate probabilistic 
properties of $u(\om)-\mE[u]$.
We postpone until the next section a precise 
description of 
the random domain $U^\eps(\om)$ and random 
boundary 
$\G^\eps(\om)$. 

In this article, we develop a deterministic method 
for 
numerical solution to the problems~\eqref{e:ut 
Delta u 
f}--\eqref{e:u eps 0} with either~\eqref{Dir 
cond} 
or~\eqref{e:Neumann cond}, respectively. In this 
model, the 
spatial domain on which the problem is stated 
depends on 
the ``random event'' $\om$ and the parameter 
$\eps>0$ 
controlling the amplitude of the perturbation. 
Thus, the 
solution depends on $\om$ and $\eps$ and is 
denoted by 
$u^\eps(\om)$. The case $\eps=0$ corresponds to 
the zero 
perturbation and the solution is denoted by $u^0$. 
In the 
paper, we shall estimate probabilistic properties 
of the 
solution perturbation $u^\eps(\om)- u^0$ when the 
perturbation amplitude is small.

\section{Preliminaries}\label{s:Preliminaries}
\subsection{Sobolev spaces}\label{ss:Sobolev}

In this subsection we introduce the function 
spaces 
needed for the forthcoming analysis. 
Let $U$ be a bounded domain in $\R^3$. The Sobolev 
space
$H^1(U)$ is defined, as usual, as the space of 
all 
distributions which together with their first 
order 
partial derivatives
are square integrable. 
The corresponding norm $\norm{\cdot}{H^1(U)}$ is 
defined by 
\begin{equation}\label{e:norm H1}
\norm{v}{H^1(U)}
:=
\left( 
\int_U 
\big(
\abs{v(\vecx)}^2 
+
\abs{\nabla v(\vecx)}^2 
\big)\,d\vecx
\right)^{1/2}.
\end{equation}
The space $H_0^1(U)$ is the space of all functions 
in 
$H^1(U)$ vanishing on the boundary $\partial U$ of 
$U$.
The following Friedrich--Poinca\'e inequality 
(see 
e.g.~\cite[Page 61]{TosWid05}) will be frequently 
used in 
this paper.

\begin{lemma}\label{l:Poincare inequality}
In the Sobolev space $H_0^1(U)$, 
the seminorm 
\[
\snorm{v}{H^1(U)}
:=
\left( 
\int_U 
\big(
\abs{\nabla v(\vecx)}^2 
\big)\,d\vecx
\right)^{1/2}
\]
is a norm and it is equivalent to the norm given 
by~\eqref{e:norm H1}. 
\end{lemma}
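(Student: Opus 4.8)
The plan is to reduce the entire statement to a single Poincar\'e-type inequality: there is a constant $C=C(U)>0$ with
\begin{equation*}
\norm{v}{L^2(U)} \le C\,\snorm{v}{H^1(U)} \qquad\text{for all } v\in H_0^1(U).
\tag{$\star$}
\end{equation*}
Granting $(\star)$, the two assertions of the lemma follow immediately. First, $(\star)$ shows that $\snorm{\cdot}{H^1(U)}$ is definite on $H_0^1(U)$: if $\snorm{v}{H^1(U)}=0$ then $\norm{v}{L^2(U)}=0$, hence $v=0$; together with the obvious homogeneity and triangle inequality this makes the seminorm a genuine norm. Second, the definition~\eqref{e:norm H1} gives $\snorm{v}{H^1(U)}\le\norm{v}{H^1(U)}$ for free, while $(\star)$ yields $\norm{v}{H^1(U)}^2=\norm{v}{L^2(U)}^2+\snorm{v}{H^1(U)}^2\le(C^2+1)\,\snorm{v}{H^1(U)}^2$; the two norms are therefore equivalent.

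To prove $(\star)$ I would first establish it for $v\in C_c^\infty(U)$ and then pass to the limit. Since $U$ is bounded, it lies in a slab $\{\vecx:0<x_1<a\}$ for some $a>0$. Extending $v$ by zero outside $U$ (permissible because $v$ has compact support in $U$), the fundamental theorem of calculus gives $v(\vecx)=\int_0^{x_1}\partial_{x_1}v(s,x_2,x_3)\,ds$. Applying the Cauchy--Schwarz inequality in the variable $s$ and then integrating successively in $x_1$ over $(0,a)$ and in $(x_2,x_3)$ over their range produces $\norm{v}{L^2(U)}^2\le a^2\,\norm{\partial_{x_1}v}{L^2(U)}^2\le a^2\,\snorm{v}{H^1(U)}^2$, i.e.\ $(\star)$ with $C=a$. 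The geometric constant $a$ is just the width of the smallest slab containing $U$.

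Finally I would extend $(\star)$ to arbitrary $v\in H_0^1(U)$ by density: pick $v_n\in C_c^\infty(U)$ with $v_n\to v$ in $\norm{\cdot}{H^1(U)}$, apply the smooth inequality to each $v_n$, and let $n\to\infty$, using that both $\norm{\cdot}{L^2(U)}$ and $\snorm{\cdot}{H^1(U)}$ are continuous with respect to $H^1$-convergence. The step I expect to be the genuine obstacle is precisely this approximation: the elementary slab computation needs compact support to justify the extension by zero, so one must know that functions in $H_0^1(U)$---defined here as those vanishing on $\partial U$---can be approximated in the $H^1$ norm by elements of $C_c^\infty(U)$. This is the standard characterization of $H_0^1(U)$ as the $H^1$-closure of $C_c^\infty(U)$, valid for the (sufficiently regular) domains under consideration. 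An alternative that sidesteps the compact-support issue is a compactness argument: were $(\star)$ to fail, there would be $v_n$ with $\norm{v_n}{L^2(U)}=1$ and $\snorm{v_n}{H^1(U)}\to 0$; boundedness in $H^1(U)$ together with the Rellich--Kondrachov theorem would extract an $L^2$-limit with vanishing gradient, hence a nonzero constant lying in $H_0^1(U)$ on connected $U$, contradicting the boundary condition.
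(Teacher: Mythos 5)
Your argument is correct, but the paper does not actually prove this lemma: it is stated as the Friedrich--Poincar\'e inequality and referred to \cite[Page~61]{TosWid05}, so there is no in-text proof to compare against. Your reduction of both claims to the single inequality $\norm{v}{L^2(U)}\le C\,\snorm{v}{H^1(U)}$ is exactly right, and both of your routes to that inequality (the slab/fundamental-theorem-of-calculus computation for $C_c^\infty(U)$ followed by density, and the contradiction argument via Rellich--Kondrachov) are the standard ones and are sound. The one point you correctly flag as delicate is also the only real issue: the paper defines $H_0^1(U)$ as the functions of $H^1(U)$ ``vanishing on the boundary,'' so to run the slab argument you must invoke the identification of this trace-zero space with the $H^1$-closure of $C_c^\infty(U)$, which holds for the Lipschitz-class boundaries ($\G^0\in C^{1,1}$, and its small perturbations) considered in the paper; your Rellich-based alternative avoids the extension-by-zero step but still needs $H_0^1(U)$ to be a closed subspace of $H^1(U)$ and $U$ to be connected (or an argument applied component by component). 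In short, your proof supplies a self-contained justification where the paper relies on a citation, at the modest cost of having to make the regularity hypotheses on $U$ explicit.
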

We denote by $H^{-1}(U)$ the dual space of 
$H_0^1(U)$ with 
the norm
\begin{equation}\label{e dual norm}
\norm{v}{H^{-1}(U)}
:=
\sup_{w\in H_0^1(U)\atop w\not= 0}
\frac{ 
\inpro{v}{w}
}{
\norm{w}{H^1(U)}
},
\quad 
v\in H^{-1}(U).
\end{equation}
The Sobolev space 
$H^{1/2}(\partial U)$ is defined by 
\[
H^{1/2}(\partial U)
=
\{ 
g: \partial U\goto \R\ |\ g=v \ \text{on}\ 
{\partial U}
\ (\text{in the trace sense})\
\text{for some}\
v\in H^1(U)
\}
\]
and
equipped with the following norm
\[
\norm{g}{H^{1/2}(\partial U)}
:=
\inf
\{ 
\norm{v}{H^1(U)}: v \in H^1(U) \text{ and }
g = v|_{\partial U}
\}.
\]
The dual space of $H^{1/2}(\partial U)$ 
is denoted by $H^{-1/2}(\partial U)$.

In the study of parabolic PDEs, it is important to 
identify 
functions $v: [a,b]\times U \goto \R$ which maps 
from 
$[a,b]$ into a Banach space. Let $X$ denote a 
Banach space 
with the norm $\norm{\cdot}{X}$. The space 
$L^p(a,b; X)$ is 
the space of all functions $v:[a,b]\goto X$ so 
that 
$v(t)\in 
X$ for almost all $t\in [a,b]$. The 
$L^p(a,b;X)$-norm of 
$v$ is given by 
\begin{equation*}\label{e:Lp ab X norm}
\norm{v}{L^p(a,b;X)}
=
\begin{cases}
\displaystyle
\left( 
\int_a^b 
\norm{v(t)}{X}^p\,dt
\right)^{1/p}
&
\text{ if } 
1\le p\le \infty
\\
\displaystyle
\esssup_{t\in [a,b]}
\norm{v(t)}{X}
&
\text{ if }
p = \infty.
\end{cases}
\end{equation*}
In this paper, we often work  on the space 
$L^p(a,b;X)$ 
where $p=2$. 
The space $H^1(a,b;X)$ is a subspace of 
$L^2(a,b;X)$ 
consisting all functions $v:[a,b]\times X\goto 
\R$ 
satisfying $\partial v/\partial t\in L^2(a,b;X)$.
The corresponding norm is defined by 
\begin{equation}\label{e:H1 ab X norm}
\norm{v}{H^1(a,b;X)}
=
\left(
\int_a^b 
\left[
\norm{v(t)}{X}^2 
+
\norm{\frac{\partial v(t)}{\partial t}}{X}^2
\right]\,dt
\right)^{1/2}.
\end{equation}
The space $C([a,b];X)$ consists of continuous 
function 
$v:[a,b]\goto X$. The $C([a,b];X)$-norm is given 
by 
\begin{equation}\label{e:C ab X norm}
\norm{v}{C([a,b];X)}
=
\max_{t\in[a,b]}
\norm{v(t)}{X}.
\end{equation}
We note here that the spaces $L^p(a,b;X)$, 
$H^1(a,b;X)$ and 
$C([a,b];X)$ are Banach spaces for all $p\ge 1$.

\subsection{Bochner spaces}\label{ss:Bochner}
Throughout this paper we denote by $(\Om, 
\Sigma,\mP)$ a 
generic complete probability space. Let $X$ be a 
Banach 
space.
% separable Hilbert space. 
For any $1\leq k \leq \infty$, the Bochner space 
$\cL^k(\Om, 
X)$ is defined as usual by
\begin{equation}\label{Lk-def}
\cL^k(\Omega,X): = 
\big\{v: \Omega\goto X, \text{ measurable}: 
\|v\|_{\cL^k(\Omega,X)} <\infty \big\}
\end{equation}
with the norm
\begin{equation}\label{Lk-norm-def}
\norm{v}{\cL^k(\Omega, X)} :=
\left\{
\begin{array}{cr} \displaystyle
\left(\int_{\Omega}\norm{v(\omega)}{X}^k d\mathbb
P(\omega)\right)^{1/k}, & 1\leq k < \infty,\\[3ex]
\mathop{\rm ess

sup}\limits_{\omega\in 
\Omega}\norm{v(\omega)}{X}, & k=\infty.
\end{array}
\right.
\end{equation}
The elements of $\cL^k(\Omega,X)$ are called 
\emph{random 
fields}. 
We remark that for a part of the subsequent 
analysis we may 
restrict to the special case when $X$ is a 
Hilbert 
space.
In particular, 
when $X_1$ and $X_2$ are two separable Hilbert 
spaces, 
their 
tensor 
product $X_1 \otimes X_2$ is a separable Hilbert 
space with 
the natural 
inner product extended by linearity from $\langle 
v\otimes 
a, 
w\otimes b\rangle_{X_1 \otimes X_2} = \langle v, 
w\rangle_{X_1}
\langle a, b\rangle_{X_2}$, cf. e.g. \cite[p. 
20]{LightCheney80}, 
\cite[Definition 12.3.2, p.298]{Aubin00}. In this 
{paper} 
we 
work 
with $k$-fold tensor products of Hilbert spaces
\begin{equation}\label{Xk-def}
 X^{(k)} := X \otimes \dots \otimes X.
\end{equation}
with the natural inner product satisfying 
$\langle 
v_1\otimes \dots\otimes v_k, w_1\otimes 
\dots\otimes 
w_k\rangle_{X^{(k)}} = \langle v_1, 
w_1\rangle_{X} 
\dots 
\langle v_k, w_k\rangle_{X}$.

\begin{definition}
For a random field $v\in \cL^k(\Omega, X)$,
its $k$-order moment $\cM^k[v]$ is an element of 
$X^{(k)}$ 
defined by
\begin{equation}\label{moments-def}
\cM^k [v] :=  \int_{\Omega} 
\big(
\underbrace{v(\omega)\otimes\cdots\otimes 
v(\omega)}_{k\textrm{-times}}
\big)
\,d\mathbb P(\omega).
\end{equation}
\end{definition}

In the case $k=1$, the statistical moment $\cM^1 
[v]$ 
coincides with the \textit{mean value}
of $v$ and is denoted by $\mE [v]$. If $k\geq 2$, 
the statistical moment $\cM^k [v]$ is the 
\textit{$k$-point autocorrelation function} of 
$v$.
The quantity $\cM^k [v - \mE[v]]$ is termed the 
$k$-th 
central moment of $v$. We distinguish in 
particular 
second order moments: the \textit{correlation} 
and 
\textit{covariance} 
defined by
\begin{equation}
\Corr[v] := \cM^2[v] \quad\text{and}\quad  
\Covv[v] := 
\cM^2[v - \mE[v]].
\end{equation}
In this paper we work with $X$ being Sobolev 
spaces of 
real-valued functions defined on a domain 
${U\subseteq\R^3}$
yielding, in  particular, the representation
\begin{equation}\label{Corr-def}
{\Corr[v](\vecx,\vecy)}
:=
\int_{\Om}
v(\vecx,\om) 
v(\vecy,\om)
\,d\mP(\om),
\quad
\vecx,\vecy\in U.
\end{equation}
We observe that $\Corr[v]$ is defined on the 
Cartesian 
product $U \times U$. 
Similarly, $\cM^k[v]$ is defined on the $k$-fold 
Cartesian 
product $U \times \dots \times U$. {Here,} 
the dimension of the underlying domain grows 
rapidly with 
increasing moment order $k$.

% \begin{verbatim}
% a\ \ b
% 
% >> clear all
% >> % This functions ofjfkfkfkfgkfg
% 
%  
% \end{verbatim}

\subsection{Random domains}

In this subsection, we describe the random 
domain and its boundary on which the 
initial-boundary value problem~\eqref{main1} is 
stated.
Let $U^0$ be a fixed bounded domain in $\R^n$, 
$n=2,3$. 
Then 
the boundary 
$\G^0:=\partial U^0$ is a closed manifold in 
$\R^n$. We 
assume that $\G^0\in 
C^{1,1}$ so that the outward normal vector 
$\vecn^0$ to 
$\G^0$ satisfies 
$\vecn^0\in C^{0,1}(\G^0)$. Suppose that 
$\kappa\in 
\cL^k(\Om, 
C^{0,1}(\G^0)$ is 
a random field, i.e. for almost any realization 
$\om\in 
\Om$, 
we have $\kappa(\cdot, \om)\in C^{0,1}(\G^0)$. 
For 
some 
sufficiently small, 
nonnegative $\eps$,  we consider a family of 
random 
closed 
surfaces of the form 
\begin{equation}\label{e:G eps def}
\G^\eps(\om) 
=
\{
\vecx + \eps \kappa(\vecx,\om)\vecn^0(\vecx) : 
\vecx\in \G^0
\},
\quad 
\om\in\Om.
\end{equation}
The bounded domain which is surrounded by 
$\G^\eps(\om)$  is denoted by $U^\eps(\om)$.
Here, the uncertainty is represented by the 
uncertainty in 
$\kappa(\cdot, \om)$.
We assume further that the random perturbation 
amplitude 
$\kappa(\vecx,\om)$ is 
centered, i.e., 
\begin{equation}\label{e:kappa 0}
\mE[\kappa(\vecx,\cdot)] 
=
0
\quad 
\forall \vecx\in \G^0,
\end{equation}
and $\kappa$ is uniformly bounded, i.e., there 
exist 
bounded 
domain $U_-$ and 
$U_+$ satisfying 
\begin{equation}\label{D D eps}
U_-\subset U^\eps(\om)\subset U_+
\quad 
\forall 
\om\in \Om,
\quad 
\forall \eps\le \eps_0,
\end{equation}
for some sufficiently small and positive $\eps_0$.
Due to~\eqref{e:kappa 0}, the mean random 
boundary 
satisfies 
\[
\mE[\G^\eps]
=
\{ 
\vecx + \eps \mE[\kappa]\vecn^0(\vecx) : \vecx\in 
\G^0
\}
=
\G^0
\]
and $\Covv[\kappa] = \Corr[\kappa]$.
We consider initial-boundary value problem on 
random 
domains $U^\eps(\om)$,
\begin{equation}\label{e:initial rand prob}
\begin{aligned}
u_t^\eps(\om) - \Delta u^\eps(\om) 
&
= f
\quad 
\text{in } Q_\cT^{\eps}(\om):= (0,T)\times 
U^\eps(\om)
\\
B u^\eps(\om) 
&
=
0
\quad 
\text{on }
\sigma_\cT^{\eps}(\om):= (0,T)\times \G^\eps(\om)
\\
u^\eps(\om)|_{t=0}
&
=
g
\quad 
\text{in }
U^\eps(\om).
\end{aligned}
\end{equation}
The randomnesses in the domain $U^\eps(\om)$ and 
its boundary result in randomness of the 
solution $u^\eps(\cdot,\om)$. Here, the solution 
operator $u^\eps(\om) = {\rm Sol}(U^\eps(\om))$ 
is nonlinear.
Thus, linearisation by using shape 
calculus is in demand. In this process, existence 
of a 
shape derivative of the solutions of 
deterministic 
perturbed problems has to be clarified. The shape 
derivative 
will then be  used to approximate statistical 
moments of 
the solution.

\section{Shape calculus}

\subsection{Deterministic perturbed domains}
In this section, we aim to prove the existence of 
shape 
derivative of the solution $u^\eps$ 
of~\eqref{e:initial rand prob}, which will then 
be 
used in linearisation development of the solution 
$u^\eps$ 
with respect to the perturbed domain $U^\eps$. In 
this 
section, we temporarily stay away from randomness 
and only 
work on 
deterministic perturbed domains.
Let $U^0$ be a fixed bounded domain in $\R^n$, 
$n=2,3$. 
Assume that  
the boundary 
$\G^0:=\partial U^0$ is a closed manifold in 
$\R^n$ satisfying  $\G^0\in 
C^{1,1}$.
Let
$\kappa\in 
C^{0,1}(\G^0)$.  For 
any $\eps\in (0,\eps_0)$, where $\eps_0$ is some 
sufficiently small positive number, we consider a 
family of 
deterministic closed 
surfaces of the form 
\begin{equation}\label{e:G eps def}
\G^\eps
=
\{
\vecx + \eps V(\vecx) : 
\vecx\in \G^0
\}.
\end{equation}
The bounded domain surrounded by $\G^\eps$ is 
denoted by 
$U^\eps$.
Analogously to~\eqref{D D eps}, we assume that 
$V$ is uniformly bounded, i.e., there exist 
bounded 
domain $U_-$ and 
$U_+$ satisfying 
\begin{equation}\label{D D eps1}
U_-\Subset U^\eps\Subset U_+
\quad 
\forall 
\eps\le \eps_0,
\end{equation}
where $U^\eps\Subset U_+$ means that the closures 
of all 
$U^\eps$ are proper subsets of $U_+$ for all 
$\eps\le 
\eps_0$.
Following~\cite{SokZol92},
we define a mapping $\cT^\eps:U_+\goto U_+$ which 
transforms 
$\G^0$ into $\G^\eps$ and $U^0$ into $U^\eps$, 
respectively, by 
\begin{equation}\label{e:T eps define}
\cT^\eps(\vecx)
:=
\vecx
+
\eps \tilde{V}(\vecx),
\quad
\vecx\in U_+,
\end{equation}
where $\tilde{V}$ is 
any 
smoothness-preserving extensions of $V$. Without 
loss of generality, we may 
assume that 
\[
\supp(\tilde{V}):=
\overline{\{\vecx\in U_+: \wtd V(\vecx)\not= 
0\}}
\]
is a proper subset of $U_+$.
Denoting by 
\begin{equation}\label{e:U star}
U^*:= \supp(\tilde{V}), 
\end{equation}
the set $U^*$ is a compact subset in 
$U_+$. In this paper,
we require in particular that $\tilde{V}\in 
W^{1,\infty}(U_+)$. 
For the ease of notation we also use  
$V$ for its extension in the rest of the paper.
In~\cite{SokZol92}, $V$ is called \textit{the 
velocity 
field} of the mapping $\cT^\eps$. 
In the present paper, for any function $v$ 
defined 
on the 
$[0,T]\times U^\eps$, we denote 
\[
v\dot{\circ} \cT^\eps(t,\vecx)
:=
v(t, \cT^\eps(\vecx)),
\quad 
(t,\vecx)\in [0,T]\times U^0
\]
for notational convenience.

In the subsequent analysis, for any $3$ 
by $3$ matrix  $M(\vecx)$ 
whose entries are functionals of $\vecx\in 
U_+\subset\R^3$,
we denote
\[
\norm{M(\cdot)}{L^p(U)}
:=
\max_{i,j=1,2,3}
\{ 
\norm{M_{i,j}(\cdot)}{L^p(U)}
\},
\quad
{1\le p\le \infty},
\]
where $M_{ij}$ are components of $M$.
In this section, we assume that $\cT^\eps$ is 
defined 
by~\eqref{e:T eps define} where 
$\tilde{V}
\in W^{1,\infty}(U_+)$, 
and denote its Jacobian matrix and Jacobian 
determinant by
$J_{\cT^\eps}(\cdot)$ and 
$\gamma(\eps,\cdot)$, 
respectively.
It can be prove that a function $v$ belongs to 
$H^1(U^\eps)$ ($H_0^1(U^\eps)$ or $L^2(U^\eps)$, 
resp.) if 
and only if $v{\circ}\cT^{\eps}$ belongs to 
$H^1(U^0)$ 
($H_0^1(U^0)$ or $L^2(U^0)$, resp.) and there hold
\begin{subequations}\label{e norm T eps}
\begin{align}
\norm{v{\circ}\cT^\eps}{H^1(U^0)}
&
\simeq 
\norm{v}{H^1(U^\eps)},
\quad 
v\in H^1(U^\eps),
\label{9M1f1}
\\
\norm{v{\circ}\cT^\eps}{L^2(U^0)}
&
\simeq 
\norm{v}{L^2(U^\eps)},
\quad 
v\in L^2(U^\eps).
\label{9M1f2}
\end{align}
\end{subequations}

The following lemmas which will be frequently 
used 
in the 
rest of this section state some important 
properties of the 
transformation $\cT^\eps$.

\begin{lemma}\label{l:9M1a}
Assume that $V\in W^{1,\infty}(U_+)$. 
The Jacobian determinant $\ga(\eps,\cdot)$ of the 
tranformation $\cT^{\eps}$ satisfies
\begin{align}
\lim_{\eps\goto0}
\norm{\ga(\eps,\cdot) -1}{L^\infty(U_+)}
=
0,
\label{9M1h}
\end{align}
and 
\begin{align}
\lim_{\eps\goto0}
\norm{\frac{\ga(\eps,\cdot) 
-1}{\eps} - \divv V}{L^\infty(U_+)}
=
0.
\label{9Oct2c}
\end{align}

\end{lemma}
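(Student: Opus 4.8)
The plan is to compute $\ga(\eps,\vecx)=\dett J_{\cT^\eps}(\vecx)$ explicitly as a polynomial in $\eps$ and then read off both limits from elementary $L^\infty$-estimates on its coefficients. Since $\cT^\eps(\vecx)=\vecx+\eps\wtd V(\vecx)$, its Jacobian matrix is $J_{\cT^\eps}=I+\eps A$, where $A:=\nabla\wtd V$ is the matrix with entries $A_{ij}=\partial_j\wtd V_i$. The hypothesis $\wtd V\in W^{1,\infty}(U_+)$ guarantees that every entry $A_{ij}$ lies in $L^\infty(U_+)$, so in the matrix norm of the statement $\norm{A}{L^\infty(U_+)}=\max_{i,j}\norm{A_{ij}}{L^\infty(U_+)}=:a<\infty$.

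First I would expand the $3\times 3$ determinant via the characteristic-polynomial identity
\begin{equation*}
\ga(\eps,\vecx)=\dett\bigl(I+\eps A(\vecx)\bigr)=1+\eps\,e_1(\vecx)+\eps^2 e_2(\vecx)+\eps^3 e_3(\vecx),
\end{equation*}
where $e_1=\operatorname{tr}A=\divv V$, $e_2$ is the sum of the three $2\times 2$ principal minors of $A$, and $e_3=\dett A$. Each $e_k$ is a fixed polynomial, homogeneous of degree $k$, in the entries $A_{ij}$.

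The key—and essentially only—analytic fact I would invoke is that $L^\infty(U_+)$ is a Banach algebra under pointwise multiplication, so a product of $k$ entries of $A$ is bounded in $L^\infty$ by $a^k$, and hence $\norm{e_k}{L^\infty(U_+)}\le C_k\,a^k$ for absolute combinatorial constants $C_k$ (with $C_1=1$). From the expansion,
\begin{equation*}
\norm{\ga(\eps,\cdot)-1}{L^\infty(U_+)}\le \eps\,a+\eps^2 C_2 a^2+\eps^3 C_3 a^3\goto 0\quad\text{as }\eps\goto 0,
\end{equation*}
which gives \eqref{9M1h}. Subtracting $\divv V=e_1$ and dividing by $\eps$ leaves
\begin{equation*}
\frac{\ga(\eps,\cdot)-1}{\eps}-\divv V=\eps\,e_2+\eps^2 e_3,
\end{equation*}
so $\norm{\tfrac{\ga(\eps,\cdot)-1}{\eps}-\divv V}{L^\infty(U_+)}\le \eps C_2 a^2+\eps^2 C_3 a^3\goto 0$, which is \eqref{9Oct2c}.

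There is no genuine obstacle here: the result is a routine consequence of the polynomial structure of the determinant together with the stability of $L^\infty$ under products. The only points meriting care are confirming that $A=\nabla\wtd V$ really has all entries in $L^\infty(U_+)$—which is precisely the content of the assumption $\wtd V\in W^{1,\infty}(U_+)$—and checking that the matrix-norm convention $\norm{\cdot}{L^\infty(U_+)}$ adopted in the paper is compatible with the product estimates used above, which it is up to the harmless constants $C_k$.
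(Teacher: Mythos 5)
Your proof is correct and follows essentially the same route as the paper: both expand $\dett(I+\eps\nabla V)$ as a cubic polynomial in $\eps$ whose coefficients are polynomials in the $L^\infty$ entries of $\nabla V$ (your $e_1,e_2,e_3$ are the paper's $\gamma_1,\gamma_2,\gamma_3$, with $e_1=\divv V$), and then read off both limits from the $L^\infty$ bounds on the coefficients. The only cosmetic difference is that the paper writes the Jacobian determinant with an absolute value and notes it can be dropped for small $\eps$ by positivity, a point your version implicitly absorbs by working directly with $\dett(I+\eps A)$.
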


\begin{proof}
Recalling~\eqref{e:T eps define},
we
denote $V(\vecx):= 
(V_1(\vecx), V_2(\vecx), V_3(\vecx))^\top$.
The Jacobian 
matrix and the Jacobian determinant
of
$\cT^\eps$ are given by
\begin{equation}\label{e:Jc T}
J_{\cT^{\eps}}(\vecx)
=
\begin{bmatrix}
1 + \eps\dfrac{\partial V_1(\vecx)}{\partial x_1}
&
\eps\dfrac{\partial V_1(\vecx)}{\partial x_2}
&
\eps\dfrac{\partial V_1(\vecx)}{\partial x_3}
\\
\eps\dfrac{\partial V_2(\vecx)}{\partial x_1}
&
1+\eps\dfrac{\partial V_2(\vecx)}{\partial x_2}
&
\eps\dfrac{\partial V_2(\vecx)}{\partial x_3}
\\
\eps\dfrac{\partial V_3(\vecx)}{\partial x_1}
&
\eps\dfrac{\partial V_3(\vecx)}{\partial x_2}
&
1+\eps\dfrac{\partial V_3(\vecx)}{\partial x_3}
\end{bmatrix}
\end{equation}
and 
\begin{align}
\gamma(\eps,\vecx)
&
=
{\Big{|}}
1 + \eps
\Big(
\sum_{k = 1}^3
\dfrac{\partial V_k(\vecx)}{\partial x_k}
\Big)
+
\eps^2
\Big(
\sum_{k,l = 1\atop k\not= l}^3
\dfrac{\partial V_k(\vecx)}{\partial x_k}
\dfrac{\partial V_l(\vecx)}{\partial x_l}
-
\dfrac{\partial V_l(\vecx)}{\partial x_k}
\dfrac{\partial V_k(\vecx)}{\partial x_l}
\Big)
\notag
\\
&
\quad +
\eps^3
\Big(
\sum_{i,j,k = 1}^3
{\rm sign}{(i,j,k)}
\dfrac{\partial V_i(\vecx)}{\partial x_1}
\dfrac{\partial V_j(\vecx)}{\partial x_2}
\dfrac{\partial V_k(\vecx)}{\partial x_3}
\Big)
{\Big{|}}
\notag
\\
&
=:
{\big{|}}
1 + \eps\gamma_1(\vecx) + \eps^2\gamma_2(\vecx) + 
\eps^3\gamma_3(\vecx)
{\big{|}}.
\label{e:tt 1}
\end{align}
Here ${\rm sign}(i,j,k)$ denotes the sign of the 
permutation $(i,j,k)$. The entries 
$A_{ij}(\eps,\vecx)$, 
$i,j = 1,2,3$, 
of the matrix $A(\eps,\vecx)$
are given by
\begin{equation}\label{e:A eps comp}
A_{ij}(\eps,\vecx)
=
\gamma(\eps,\vecx)^{-1}
\left(
\delta_{ij}
+
\sum_{n=1}^4 \eps^n h_{ijn}(\vecx)
\right),
\end{equation}
where $h_{ijn}$ is a polynomial of 
partial derivatives of $V$ and 
$\delta_{ij}$ is the Kronecker delta. 
Since $V \in W^{1,\infty}(U_+)$, we 
deduce 
\begin{equation}\label{e:st 4}
\begin{gathered}
\gamma_n,\ h_{ijn}\in L^\infty(U_+)\cap L^2(U_+),
\quad
i,j =1,2,3
\
\text{and}
\
n=1,\ldots,4,
\\
\lim_{\eps\goto 0}
\norm{\gamma(\eps,\cdot)}{L^\infty(U_+)}
>0,
\end{gathered}
\end{equation}
where $\gamma_1$, $\gamma_2$, $\gamma_3$ are 
defined 
by~\eqref{e:tt 1} and 
$\gamma_4:= 0$ for notational convenience later.
In particular, for sufficiently small $\eps>0$, 
there holds
\begin{equation}\label{e:vbn 4}
\gamma(\eps,\vecx)
=
1 + \eps\gamma_1(\vecx) + \eps^2\gamma_2(\vecx) + 
\eps^3\gamma_3(\vecx) 
\geq c > 0
\qquad
\forall \vecx\in U_+.
\end{equation}
We then have 
\begin{align}
\lim_{\eps\goto \infty}
\norm{\ga(\eps,\cdot) - 1}{L^\infty(U_+)}
&
=
\lim_{\eps\goto \infty}
\eps 
\norm{\ga_1 + \eps \ga_2 + \eps^2 
\ga_3}{L^\infty(U_+)}
=
0,
\notag
\end{align}
noting~\eqref{e:st 4}. 
Furthermore, it follows from~\eqref{e:vbn 4} 
and~\eqref{e:tt 1} that 
\begin{align}
\lim_{\eps\goto0}
\norm{\frac{\ga(\eps,\cdot) 
-1}{\eps} - \divv V}{L^\infty(U_+)}
&
=
\eps\norm{\gamma_2 + \eps\gamma_3}{L^\infty(U_+)}.
\notag
\end{align}
Letting $\eps$ go to zero and noting~\eqref{e:st 
4}, we obtain~\eqref{9Oct2c}, completing the 
proof 
of the lemma.

\end{proof}

\begin{lemma}\label{l:T eps prop} 
Assume that $V\in W^{1,\infty}(U_+)$.
{Consider} $A(\eps,\cdot):= \gamma(\eps,\cdot) 
J_{\cT^\eps}^{-1} J_{\cT^\eps}^{-\top}$,
where $J_{\cT^\eps}^{\top}$ {is} the transpose of 
$J_{\cT^{\eps}}$. {Then} there hold
\begin{equation}\label{e:st 1}
\lim_{\eps\goto 0} \norm{A(\eps,\cdot) - 
I}{L^\infty(U_+)} 
= 0
\end{equation}
and
\begin{equation}\label{e:st 2}
\lim_{\eps\goto 0} 
\norm{\dfrac{A(\eps,\cdot) - I}{\eps} - 
A'(0,\cdot)}{L^\infty(U_+)} = 0.
\end{equation}
Here, $A'(0,\cdot)$ is the G\^ateaux derivative 
of 
$A(\eps,\cdot)$ 
%(determined by $\cT^{\eps}ilon$) 
at $\eps =0$, namely
\[
A'(0,\vecx)
=
\lim_{\eps\goto 0}
\frac{A(\eps,\vecx) - I(\vecx)}{\eps},
\quad\vecx\in U_+.
\]
\end{lemma}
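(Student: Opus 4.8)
The plan is to read off both limits directly from the explicit componentwise representation \eqref{e:A eps comp} recorded above. Since the Jacobian determinant is positive for small $\eps$, one has $J_{\cT^\eps}^{-1} = \gamma(\eps,\cdot)^{-1}\,\mathrm{adj}(J_{\cT^\eps})$, whence
\[
A(\eps,\cdot)
=
\gamma(\eps,\cdot)\,J_{\cT^\eps}^{-1}J_{\cT^\eps}^{-\top}
=
\gamma(\eps,\cdot)^{-1}\,\mathrm{adj}(J_{\cT^\eps})\,\mathrm{adj}(J_{\cT^\eps})^{\top}.
\]
The entries of $\mathrm{adj}(J_{\cT^\eps})\,\mathrm{adj}(J_{\cT^\eps})^{\top}$ are polynomials in $\eps$ of degree at most $4$ whose coefficients are polynomials in the first partial derivatives of $V$; this is precisely \eqref{e:A eps comp}, with $h_{ijn}\in L^\infty(U_+)\cap L^2(U_+)$ by \eqref{e:st 4} and $\gamma(\eps,\cdot)^{-1}$ uniformly bounded on $U_+$ by the lower bound \eqref{e:vbn 4}. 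The whole argument then reduces to passing to limits in products of $L^\infty$ functions, using that $L^\infty(U_+)$ is a Banach algebra.

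For \eqref{e:st 1} I would write, componentwise,
\[
A_{ij}(\eps,\cdot) - \delta_{ij}
=
\delta_{ij}\big(\gamma(\eps,\cdot)^{-1} - 1\big)
+
\gamma(\eps,\cdot)^{-1}\sum_{n=1}^4 \eps^n h_{ijn}.
\]
By \eqref{9M1h} and \eqref{e:vbn 4} the factor $\gamma(\eps,\cdot)^{-1} - 1 = \big(1-\gamma(\eps,\cdot)\big)/\gamma(\eps,\cdot)$ tends to $0$ in $L^\infty(U_+)$, while each $\eps^n h_{ijn}$ tends to $0$ in $L^\infty(U_+)$ and $\gamma(\eps,\cdot)^{-1}$ stays uniformly bounded. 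Hence every entry of $A(\eps,\cdot)-I$ tends to $0$ in $L^\infty(U_+)$, and taking the maximum over $i,j$ yields \eqref{e:st 1}.

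For \eqref{e:st 2} I would divide the same identity by $\eps$,
\[
\frac{A_{ij}(\eps,\cdot) - \delta_{ij}}{\eps}
=
\delta_{ij}\,\frac{\gamma(\eps,\cdot)^{-1} - 1}{\eps}
+
\gamma(\eps,\cdot)^{-1}\sum_{n=1}^4 \eps^{\,n-1} h_{ijn},
\]
and identify the two limits separately. Writing $\big(\gamma(\eps,\cdot)^{-1}-1\big)/\eps = -\gamma(\eps,\cdot)^{-1}\,\big(\gamma(\eps,\cdot)-1\big)/\eps$ and invoking \eqref{9Oct2c} together with $\gamma(\eps,\cdot)^{-1}\goto 1$ gives $\big(\gamma(\eps,\cdot)^{-1}-1\big)/\eps \goto -\divv V$ in $L^\infty(U_+)$; the remaining sum converges in $L^\infty(U_+)$ to $h_{ij1}$, since $\eps^{\,n-1}h_{ijn}\goto 0$ for $n\ge 2$ and $\gamma(\eps,\cdot)^{-1}\goto 1$. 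Consequently the pointwise G\^ateaux derivative exists, with entries $-\delta_{ij}\divv V + h_{ij1}$, which after a short computation equals $(\divv V)\,I - \big(\nabla V + (\nabla V)^\top\big)$, and the convergence is uniform, proving \eqref{e:st 2}.

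The routine parts are the polynomial bookkeeping behind \eqref{e:A eps comp} and the elementary products-of-limits manipulations. The one step that needs care is the limit of $\big(\gamma(\eps,\cdot)^{-1}-1\big)/\eps$: it cannot be obtained by differentiating a pointwise formula for $\gamma^{-1}$, but must be assembled by combining the two separate estimates \eqref{9M1h} and \eqref{9Oct2c} of Lemma \ref{l:9M1a} with the uniform lower bound \eqref{e:vbn 4}, which keeps every intermediate factor uniformly bounded and thereby legitimises multiplying the convergences in $L^\infty(U_+)$.
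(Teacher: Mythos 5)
Your argument is correct. Note that the paper itself does not prove this lemma: it simply defers to \cite[Lemma 3.1]{ChePhaTra15}. Your proof is a self-contained version of exactly the computation the paper has already prepared the ground for: it uses the componentwise representation \eqref{e:A eps comp} with $h_{ijn}\in L^\infty(U_+)\cap L^2(U_+)$ from \eqref{e:st 4}, the uniform lower bound \eqref{e:vbn 4} to control $\gamma(\eps,\cdot)^{-1}$, and the two limits \eqref{9M1h} and \eqref{9Oct2c} of Lemma~\ref{l:9M1a}. The decomposition
$A_{ij}(\eps,\cdot)-\delta_{ij}=\delta_{ij}\big(\gamma(\eps,\cdot)^{-1}-1\big)+\gamma(\eps,\cdot)^{-1}\sum_{n=1}^{4}\eps^{n}h_{ijn}$
is exactly what is needed, the treatment of $\big(\gamma(\eps,\cdot)^{-1}-1\big)/\eps=-\gamma(\eps,\cdot)^{-1}\big(\gamma(\eps,\cdot)-1\big)/\eps$ is the one delicate point and you handle it correctly, and the identification $A'(0,\cdot)=(\divv V)\,I-\big(\nabla V+(\nabla V)^{\top}\big)$ is consistent with $h_{ij1}=2\delta_{ij}\divv V-\big(\partial V_i/\partial x_j+\partial V_j/\partial x_i\big)$. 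In short, you have supplied the proof the paper omits, using only ingredients already established in the text.
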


\begin{proof}
The proof of the lemma can be done in the same 
manner as
the proof of~\cite[Lemma 3.1]{ChePhaTra15}.
\end{proof}

\begin{lemma}\label{l:9M1b}
For any function $v\in L^2([0,T]\times U_+)$, 
there holds
\begin{equation}\label{9M1m}
\lim_{\eps\goto 0} 
\norm{
v\dot{\circ}\cT^\eps
\ga(\eps,\cdot) - v}{L^2(0,T;L^2(U_+))}
= 
0.
\end{equation}
\end{lemma}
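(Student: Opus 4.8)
\proof
The plan is to estimate via the triangle inequality
\[
\norm{v\dcirc\cT^\eps\,\ga(\eps,\cdot) - v}{L^2(0,T;L^2(U_+))}
\le
\norm{v\dcirc\cT^\eps\,(\ga(\eps,\cdot)-1)}{L^2(0,T;L^2(U_+))}
+
\norm{v\dcirc\cT^\eps - v}{L^2(0,T;L^2(U_+))}
\]
and to send each summand to zero. For the first summand, since $\ga(\eps,\cdot)$ enters only as a spatial multiplier, I would bound it by $\norm{\ga(\eps,\cdot)-1}{L^\infty(U_+)}\,\norm{v\dcirc\cT^\eps}{L^2(0,T;L^2(U_+))}$, the first factor vanishing as $\eps\goto 0$ by~\eqref{9M1h}. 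This reduces the first summand to showing that the composition operator $v\mapsto v\dcirc\cT^\eps$ is bounded on $L^2(0,T;L^2(U_+))$ uniformly for small $\eps$. Since $\cT^\eps$ acts only on the spatial variable and reduces to the identity outside the compact set $U^*$, it is a bijection of $U_+$ for small $\eps$; the change of variables $\vecy=\cT^\eps(\vecx)$ then gives $\norm{v(t,\cdot)\dcirc\cT^\eps}{L^2(U_+)}^2=\int_{U_+}|v(t,\vecy)|^2\,\ga(\eps,(\cT^\eps)^{-1}\vecy)^{-1}\,d\vecy\le C\,\norm{v(t,\cdot)}{L^2(U_+)}^2$, with $C$ independent of $\eps$ and $t$ by~\eqref{9M1h}, and integrating in $t$ yields the desired uniform bound.

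The heart of the matter is the second summand, the continuity of the composition operator at $\eps=0$. I would first prove it on a dense class. For $w\in C([0,T]\times\ol{U_+})$, note that $\norm{\cT^\eps-\mathrm{id}}{L^\infty(U_+)}=\eps\norm{V}{L^\infty(U_+)}\goto 0$, so $w(t,\cT^\eps(\vecx))\goto w(t,\vecx)$ uniformly, hence pointwise, with the uniform bound $\norm{w}{C([0,T]\times\ol{U_+})}$ on the finite-measure set $(0,T)\times U_+$. The dominated convergence theorem then gives $\norm{w\dcirc\cT^\eps-w}{L^2(0,T;L^2(U_+))}\goto 0$.

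Finally I would pass to arbitrary $v\in L^2([0,T]\times U_+)$ by density. Given $\de>0$, pick $w\in C([0,T]\times\ol{U_+})$ with $\norm{v-w}{L^2(0,T;L^2(U_+))}<\de$; splitting $v\dcirc\cT^\eps-v=(v-w)\dcirc\cT^\eps+(w\dcirc\cT^\eps-w)+(w-v)$ and invoking the uniform operator bound of the first paragraph for the first term gives
\[
\norm{v\dcirc\cT^\eps-v}{L^2(0,T;L^2(U_+))}\le (C^{1/2}+1)\,\de+\norm{w\dcirc\cT^\eps-w}{L^2(0,T;L^2(U_+))}.
\]
Letting $\eps\goto 0$ and using the continuous case yields $\limsup_{\eps\goto 0}\norm{v\dcirc\cT^\eps-v}{L^2(0,T;L^2(U_+))}\le(C^{1/2}+1)\de$, and since $\de$ is arbitrary the second summand vanishes, completing the proof. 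I expect the main obstacle to be the uniform-in-$\eps$ boundedness of the composition operator in the first paragraph: one must confirm that $\cT^\eps$ is a bijection of $U_+$ so that the change of variables is legitimate, and that the inverse Jacobian factor $\ga(\eps,(\cT^\eps)^{-1}\cdot)^{-1}$ stays bounded, both of which hinge on $\ga(\eps,\cdot)$ remaining close to $1$ and on $\cT^\eps$ being the identity off $U^*$; once these are secured, the density argument is routine.
\endproof
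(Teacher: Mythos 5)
Your proposal is correct and follows essentially the same route as the paper: the same triangle-inequality split into $v\dcirc\cT^\eps(\ga(\eps,\cdot)-1)$ and $v\dcirc\cT^\eps-v$, the same change-of-variables bound showing the composition operator is uniformly bounded in $L^2$, and the same density argument from continuous functions for the second summand. Your write-up is in fact slightly more explicit than the paper's (which compresses the density step into one sentence), but there is no substantive difference.
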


\begin{proof}
We then have
\begin{align}
\int_0^T
\norm{
\big(\ga(\eps,\cdot)
-1\big)v\dcirc\cT^\eps(\tau)
}{L^2(U^0)}^2\,d\tau
 &
\le
\norm{\gamma(\eps,\cdot) -1}{L^\infty(U^0)}^2\,
\int_0^T
\norm{v\dcirc\cT^\eps(\tau)}{L^2(U^0)}^2d\tau
\notag
\\
&
\leq
C\eps^2\,
\int_0^T
\norm{v\dcirc\cT^\eps(\tau)}{L^2(U^0)}^2d\tau.
\label{e:st 10}
\end{align}
Using the change of variables $\vecy 
=\cT^\eps(\vecx)$ and 
noting~\eqref{e:st 4}, we have 
\begin{align}
\norm{v\dcirc\cT^\eps(\tau)}{L^2(U^0)}^2
&
=
\int_{U^\eps}
\snorm{v(\tau,\vecy)}{}^2
\big(\gamma(\eps, 
(\cT^{\eps})^{-1}(\vecy))\big)^{-1}
\,d\vecy
\le 
C
\norm{v(\tau)}{L^2(U^\eps)}^2
\quad 
\forall \tau\in (0,T).
\notag%\label{e:Tue 1}
\end{align}
Here, the constant $C$ is independent of $\tau$. 
Thus,
\[ 
\int_0^T
\norm{v\dcirc\cT^\eps(\tau)}{L^2(U^0)}^2d\tau
\lesssim 
\int_0^T
\norm{v(\tau)}{L^2(U^\eps)}^2d\tau
\le 
\int_0^T
\norm{v(\tau)}{L^2(U_+)}^2d\tau.
\]
This together with~\eqref{e:st 10} implies 
\begin{equation*}\label{ea1}
\int_0^T
\norm{
\big(\ga(\eps,\cdot)
-1\big)v\dcirc\cT^\eps(\tau)
}{L^2(U^0)}^2\,d\tau
\lesssim 
\eps^2 
\int_0^T
\norm{v(\tau)}{L^2(U_+)}^2d\tau.
\end{equation*}
Therefore,
\begin{equation}\label{e:st 9}
\lim_{\eps\goto 0}
\int_0^T
\norm{\big(\ga(\eps,\vecx)
-1\big)v\dcirc\cT^\eps(\tau)}{L^2(U^0)}^2d\tau
=
0.
\end{equation}

Assume that 
$v$ belongs to  $C([0,T]\times U_+)$. Then $v$ is 
continuous and thus uniformly continuous on the 
compact set $[0,T]\times U^*$. Furthermore, since 
$\wtd \kappa\in W^{1,\infty}(U_+)$, there holds 
\[
\lim_{\eps\goto 0}
\norm{\cT^\eps(\vecx)-\vecx}{L^\infty(U^*)}
=
0.
\]
Thus,
\[
\lim_{\eps\goto 0}
\norm{v\dcirc \cT^\eps - 
v}{L^\infty([0,T]\times U^*)} 
=
0.
\]
Noting~\eqref{e:U star}, the difference $v\dcirc 
\cT^\eps - v$ vanishes outside $U^*$ and thus 
 \[
\lim_{\eps\goto 0}
\int_0^T 
\norm{ 
(v\dcirc \cT^\eps-v)(\tau)
}{L^2(U_+)}^2d\tau 
= 
\lim_{\eps\goto 0}
\int_0^T 
\norm{ 
(v\dcirc \cT^\eps-v)(\tau)
}{L^2(U^*)}^2d\tau
=
0.
\]
By using a density argument we 
deduce that 
\begin{align}
\lim_{\eps\goto 0}
\int_0^T 
\norm{ 
(v\dcirc \cT^\eps-v)(\tau)
}{L^2(U_+)}^2d\tau = 0  
\quad
\forall
v\in L^2([0,T]\times U_+).
\notag
\end{align}
This together with~\eqref{e:st 9} (by using the 
{triangle} inequality) yields the desired 
equality.
\end{proof}

\begin{lemma}\label{lem:T eps f 2}
For any function $v\in
H^1(U_+)$, there hold
\begin{align}
\lim_{\eps\goto 0}
\norm{\frac{v{\circ}\cT^\eps - v}{\eps} 
-
V\cdot \nabla  v
}{L^2(U_+)}
&
=
0,
\label{e:T g 2}
\\
\lim_{\eps\goto 0}
\norm{
\frac{\ga(\eps,\cdot) (v{\circ}\cT^\eps) - 
v}{\eps}
-
\divv\big{(} vV \big{)}
}{L^2(U_+)}
&
=
0.
\label{e:T g 1}
\end{align}

\end{lemma}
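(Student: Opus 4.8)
The plan is to prove the first identity \eqref{e:T g 2} directly and then to obtain the second identity \eqref{e:T g 1} from it combined with Lemma~\ref{l:9M1a}. For \eqref{e:T g 2} I would study the linear operators $L_\eps v := (v\circ\cT^\eps - v)/\eps$ and $Lv := V\cdot\nabla v$ from $H^1(U_+)$ into $L^2(U_+)$, establish the convergence $L_\eps v \goto Lv$ first for smooth $v$, and then extend it to every $v\in H^1(U_+)$ by a density argument. The density argument hinges on a bound for $\norm{L_\eps v}{L^2(U_+)}$ that is \emph{uniform} in $\eps\le\eps_0$; producing this uniform bound is the main obstacle.

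For smooth $v$ the fundamental theorem of calculus applied to $s\mapsto v(\vecx + s\eps V(\vecx))$ (noting $\cT^{s\eps}(\vecx)=\vecx+s\eps V(\vecx)$) gives the representation
\[
\frac{v\circ\cT^\eps(\vecx) - v(\vecx)}{\eps}
=
\int_0^1 V(\vecx)\cdot\nabla v\big(\vecx + s\eps V(\vecx)\big)\,ds,
\]
so that $L_\eps v - Lv = \int_0^1 V(\cdot)\cdot\big(\nabla v(\cT^{s\eps}(\cdot)) - \nabla v(\cdot)\big)\,ds$. Since $\nabla v$ is uniformly continuous, $V$ is bounded with support in the compact set $U^*$ (recall \eqref{e:U star}), and $\norm{\cT^\eps(\vecx)-\vecx}{L^\infty(U^*)}\goto0$, the integrand tends to zero uniformly on $U^*$ and vanishes off $U^*$; integrating over the bounded set $U^*$ yields $\norm{L_\eps v - Lv}{L^2(U_+)}\goto0$.

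From the same representation, the Cauchy--Schwarz inequality in $s$ and Fubini's theorem give
\[
\norm{L_\eps v}{L^2(U_+)}^2
\le
\norm{V}{L^\infty(U_+)}^2
\int_0^1\int_{U_+}\abs{\nabla v\big(\cT^{s\eps}(\vecx)\big)}^2\,d\vecx\,ds.
\]
For each fixed $s$ the change of variables $\vecy=\cT^{s\eps}(\vecx)$, whose Jacobian determinant $\ga(s\eps,\cdot)$ is bounded below by a positive constant uniformly in $s\eps\le\eps_0$ by \eqref{e:vbn 4}, together with $\cT^{s\eps}(U_+)\subseteq U_+$, shows that the inner integral is $\le C\norm{\nabla v}{L^2(U_+)}^2$ with $C$ independent of $\eps$. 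Hence $\norm{L_\eps v}{L^2(U_+)}\le C\norm{v}{H^1(U_+)}$ uniformly in $\eps\le\eps_0$; since smooth functions are dense in $H^1(U_+)$ and $L_\eps$ is bounded for each fixed $\eps$, this bound persists for all $v\in H^1(U_+)$. Given $v\in H^1(U_+)$ and $\delta>0$, choose smooth $w$ with $\norm{v-w}{H^1(U_+)}<\delta$ and write $L_\eps v - Lv = L_\eps(v-w) + (L_\eps w - Lw) + L(w-v)$; the outer terms are $\le C\delta$ uniformly in $\eps$ (using also $\norm{Lv}{L^2(U_+)}\le\norm{V}{L^\infty(U_+)}\norm{\nabla v}{L^2(U_+)}$), while the middle term vanishes as $\eps\goto0$ by the smooth case. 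Letting $\delta\goto0$ proves \eqref{e:T g 2}.

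Finally, for \eqref{e:T g 1} I would split
\[
\frac{\ga(\eps,\cdot)\,(v\circ\cT^\eps) - v}{\eps}
=
\ga(\eps,\cdot)\,\frac{v\circ\cT^\eps - v}{\eps}
+
\frac{\ga(\eps,\cdot) - 1}{\eps}\,v.
\]
By \eqref{e:T g 2} the difference quotient converges to $V\cdot\nabla v$ in $L^2(U_+)$ and, by \eqref{9M1h}, $\ga(\eps,\cdot)\goto1$ in $L^\infty(U_+)$, so the first term converges to $V\cdot\nabla v$ in $L^2(U_+)$. By \eqref{9Oct2c}, $(\ga(\eps,\cdot)-1)/\eps\goto\divv V$ in $L^\infty(U_+)$, whence the second term converges to $v\,\divv V$ in $L^2(U_+)$, upon bounding the error by $\norm{(\ga(\eps,\cdot)-1)/\eps - \divv V}{L^\infty(U_+)}\,\norm{v}{L^2(U_+)}$. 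Adding, the limit is $V\cdot\nabla v + v\,\divv V = \divv(vV)$, which is exactly \eqref{e:T g 1}.
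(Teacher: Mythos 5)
Your proposal is correct, and it follows the same overall skeleton as the paper (prove \eqref{e:T g 2} for smooth $v$ and extend by density, then deduce \eqref{e:T g 1} by an algebraic split plus Lemma~\ref{l:9M1a}), but it differs in two ways that are worth recording. First, the paper invokes the mean value theorem with a pointwise $\theta_{\vecx}$ and then simply asserts that density of $C^\infty(U_+)$ in $H^1(U_+)$ transfers the limit to all of $H^1(U_+)$; it never exhibits the uniform-in-$\eps$ bound $\norm{(v\circ\cT^\eps-v)/\eps}{L^2(U_+)}\le C\norm{v}{H^1(U_+)}$ without which the density argument does not close. Your integral (FTC) representation $\int_0^1 V\cdot\nabla v(\cT^{s\eps}(\cdot))\,ds$, combined with Cauchy--Schwarz, Fubini and the change of variables with Jacobian bounded below by \eqref{e:vbn 4}, supplies exactly this missing estimate, so your write-up actually completes the step the paper leaves implicit. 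Second, for \eqref{e:T g 1} you split as $\ga(\eps,\cdot)\,\tfrac{v\circ\cT^\eps-v}{\eps}+\tfrac{\ga(\eps,\cdot)-1}{\eps}\,v$, whereas the paper splits as $\tfrac{\ga(\eps,\cdot)-1}{\eps}(v\circ\cT^\eps)+\tfrac{v\circ\cT^\eps-v}{\eps}$ and must then expand $\ga$ via $\gamma_1=\divv V$ and run a second density argument to get $v\circ\cT^\eps\to v$ in $L^2$. Your split is cleaner: the second term pairs the $L^\infty$ limit \eqref{9Oct2c} against the fixed function $v\in L^2(U_+)$ and needs nothing further, and the first term follows from \eqref{e:T g 2} together with \eqref{9M1h}. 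Both decompositions are algebraically valid; yours buys a shorter and more self-contained deduction of \eqref{e:T g 1}, at no extra cost.
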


\begin{proof}
First of all, the equality~\eqref{e:T g 2} can be 
proved by 
using the density argument in which we shall 
prove~\eqref{e:T g 2}
for an arbitrary function $v\in C^\infty(U_+)$ 
and 
because of
the density of $C^\infty(U_+)$ in $H^1(U_+)$, the 
equality 
is also true for all functions in 
$H^1(U_+)$.
Indeed, let $v\in C^\infty(U_+)$.
Applying the mean
value theorem, for any $\vecx\in U^*$, there 
exists a 
$\theta_{\vecx}\in (0,1)$ such that 
\begin{equation*}
v(\cT^\eps(\vecx)) - v(\vecx)
=
(\cT^\eps(\vecx) - \vecx)\cdot
\nabla v\big(\theta_{\vecx} \cT^\eps(\vecx) + 
(1-\theta_{\vecx})\vecx\big).
\end{equation*}
This gives, noting~\eqref{e:T eps define},  
\begin{align}
\frac{v(\cT^\eps(\vecx)) - v(\vecx)
}{\eps}
-
V(\vecx)\cdot \nabla v(\vecx)
&
=
V(\vecx)\cdot 
\nabla
\left[ 
v\big(\theta_{\vecx} \cT^\eps(\vecx) + 
(1-\theta_{\vecx})\vecx\big)
-
v(\vecx)
\right].
\label{e:vi1}
\end{align}
Since $\wtd\kappa\in W^{1,\infty}(U^*)$, 
$\lim_{\eps\goto 0} 
\norm{\cT^\eps(\vecx)-\vecx}{L^\infty(U^*)} = 0$. 
If $v\in 
C^\infty(U^*)$, its partial derivatives are 
uniformly 
continuous in $U^*$. Thus 
\begin{align}
\lim_{\eps\goto 0}
\norm{ 
\nabla 
\left[ 
v\big(\theta_{\vecx} \cT^\eps(\vecx) + 
(1-\theta_{\vecx})\vecx\big)
-
v(\vecx)
\right]
}{L^\infty(U^*)}
=
0.
\notag
\end{align}
This together with~\eqref{e:vi1} implies
\begin{align}
\lim_{\eps\goto 0}
\norm{\frac{v{\circ}\cT^\eps - v}{\eps} 
-
V\cdot \nabla  v
}{L^2(U_+)}
&
=
\lim_{\eps\goto 0}
\norm{\frac{v{\circ}\cT^\eps - v}{\eps} 
-
V\cdot \nabla  v
}{L^2(U^*)}
=
0.
\label{9Au1e}
\end{align}
We next apply 
the triangle
inequality to  obtain
\begin{align}
\norm{
\frac{\ga(\eps,\cdot) (v{\circ}\cT^\eps) -
v}{\eps}
-
\divv\big{(} vV \big{)}
}{L^2(U_+)}
&
\le
\norm{\frac{\gamma(\eps,\cdot)-1}{\eps}(v{\circ}
\cT^\eps) 
-
v\divv V}{L^2({U_+})}
\notag
\\
&
+
\norm{\frac{v{\circ}\cT^\eps - v}{\eps} 
-
V\cdot \nabla  v
}{L^2({U_+})}.
\label{9Au1f}
\end{align}
Recall from~\eqref{e:tt 1} that $\gamma_1=\divv 
V$. 
It follows from~\eqref{e:vbn 4} that
\begin{align}
\frac{\gamma(\eps,\cdot) - 
1}{\eps}(v\circ\cT^\eps)
-
v \divv V
&
=
\gamma_1
(v\circ\cT^\eps -v)
+
\eps(\gamma_2 + \eps\gamma_3)
(v\circ\cT^\eps).
\notag
\end{align}
Employing the density argument as in proof of 
Lemma~\ref{l:9M1b}
and noting~\eqref{e:st 4}, 
we obtain
\begin{equation*}
\lim_{\eps\goto 0}
\norm{\gamma_1(v\circ\cT^\eps -v)
}{L^2({U_+})}
=
0.
\end{equation*}
Noting~\eqref{e:st 4}, we deduce 
\[
\lim_{\eps\goto 0}
\norm{\eps(\gamma_2 + \eps\gamma_3)(v\circ 
\cT^{\eps})}{L^2({U_+})}
=
0.
\]
Hence, 
\begin{align}
\lim_{\eps\goto 0}
\norm{\frac{\gamma(\eps,\cdot)-1}{\eps}
(v\circ\cT^\eps) 
-
v\divv V}{L^2({U_+})}
=
0.
\label{9Au1g}
\end{align}
The equality~\eqref{e:T g 2} can be derived 
from~\eqref{9Au1e}--\eqref{9Au1g}. 
This completes the proof of the lemma.

\end{proof}

\begin{lemma}\label{l:f T eps}
Let $v\in L^2(0,T; H^2(U_+))\cap 
H^1(0,T;L_2(U_+))$. There hold
\begin{align}
\lim_{\eps\goto 0}
\norm{ 
\frac{
v\dcirc\cT^\eps - v
}{\eps}
-
V\cdot \nabla v
}{L^2(0,T;L^2(U_+))}
& 
=
0,
\label{e:fT1}
\end{align}
and
\begin{align}
\lim_{\eps\goto 0}
\norm{ 
\frac{\gamma(\eps,\cdot)-1}{\eps}
v\dcirc\cT^\eps 
-
v\divv V
}{L^2(0,T;L^2(U_+))}
&
=
0.
\label{e:fT2}
\end{align}

\end{lemma}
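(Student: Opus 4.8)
The plan is to recognise that both \eqref{e:fT1} and \eqref{e:fT2} are merely the time-integrated versions of the purely spatial statements already established in Lemma~\ref{lem:T eps f 2}: squaring the $L^2(0,T;L^2(U_+))$-norm turns each assertion into an integral over $\tau\in(0,T)$ of the square of an $L^2(U_+)$-norm, and \eqref{e:fT1} is exactly the $\tau$-integral of \eqref{e:T g 2} applied to the slice $v(\tau)$, while \eqref{e:fT2} is the $\tau$-integral of the intermediate estimate~\eqref{9Au1g} obtained inside the proof of Lemma~\ref{lem:T eps f 2}. The whole argument thus reduces to two ingredients: pointwise-in-time convergence of the integrand, and a uniform-in-$\eps$ integrable majorant permitting passage to the limit under the time integral by dominated convergence.

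First I would fix the pointwise-in-time convergence. Since $v\in L^2(0,T;H^2(U_+))$, for almost every $\tau\in(0,T)$ the slice $v(\tau)$ lies in $H^2(U_+)\hookrightarrow H^1(U_+)$, so Lemma~\ref{lem:T eps f 2} applies to $v(\tau)$ and yields, for a.e.\ such $\tau$,
\[
\lim_{\eps\goto 0}\norm{\frac{v(\tau)\dcirc\cT^\eps - v(\tau)}{\eps} - V\cdot\nabla v(\tau)}{L^2(U_+)} = 0,
\]
and likewise $\frac{\ga(\eps,\cdot)-1}{\eps}\,v(\tau)\dcirc\cT^\eps \goto v(\tau)\,\divv V$ in $L^2(U_+)$ by~\eqref{9Au1g}. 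Hence both integrands tend to zero for a.e.\ fixed $\tau$.

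Next I would produce the dominating function. For smooth $v$ the line-integral representation $\frac{v\dcirc\cT^\eps - v}{\eps}(\vecx) = \int_0^1 \wtd V(\vecx)\cdot\nabla v\big(\cT^{s\eps}(\vecx)\big)\,ds$, combined with $\wtd V\in W^{1,\infty}(U_+)$, the Minkowski integral inequality, and the uniform nondegeneracy of the Jacobian from~\eqref{e:st 4} applied in the change of variables $\vecy=\cT^{s\eps}(\vecx)$, gives $\norm{(v(\tau)\dcirc\cT^\eps - v(\tau))/\eps}{L^2(U_+)}\le C\norm{\nabla v(\tau)}{L^2(U_+)}$ uniformly for small $\eps$; this extends to $v(\tau)\in H^1(U_+)$ by density. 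Since also $\norm{V\cdot\nabla v(\tau)}{L^2(U_+)}\le C\norm{v(\tau)}{H^1(U_+)}$, the integrand of \eqref{e:fT1} is dominated by $C\norm{v(\tau)}{H^1(U_+)}^2$, which is integrable over $(0,T)$ as $v\in L^2(0,T;H^1(U_+))$. For \eqref{e:fT2} the majorant is even simpler: $(\ga(\eps,\cdot)-1)/\eps = \ga_1 + \eps\ga_2 + \eps^2\ga_3$ is bounded in $L^\infty(U_+)$ uniformly in $\eps$ by~\eqref{e:st 4}, and $\norm{v(\tau)\dcirc\cT^\eps}{L^2(U_+)}\le C\norm{v(\tau)}{L^2(U_+)}$ by change of variables, so that integrand is dominated by $C\norm{v(\tau)}{L^2(U_+)}^2$, again integrable in $\tau$.

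With pointwise convergence and an integrable majorant in hand, I would conclude by the dominated convergence theorem applied along an arbitrary sequence $\eps_n\goto 0$: the time integrals of both squared integrands tend to zero, and since the limit is independent of the chosen sequence the full limit as $\eps\goto 0$ vanishes, giving \eqref{e:fT1} and \eqref{e:fT2}. The only genuinely delicate point is the domination step, namely securing the uniform-in-$\eps$ control of the difference quotient $(v\dcirc\cT^\eps - v)/\eps$, which rests on the line-integral representation and the uniform lower bound for $\ga(\eps,\cdot)$ from~\eqref{e:st 4}; everything else is bookkeeping. I note that only the spatial regularity $v\in L^2(0,T;H^1(U_+))$ is actually used here, the stronger hypothesis $v\in L^2(0,T;H^2(U_+))\cap H^1(0,T;L^2(U_+))$ being the natural regularity class of the parabolic solution to which the lemma will subsequently be applied.
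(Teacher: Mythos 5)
Your argument is correct, but it is organised rather differently from the paper's. The paper dispatches this lemma in one sentence: repeat the proof of Lemma~\ref{lem:T eps f 2} for time-dependent functions, using the embedding $L^2(0,T;H^2(U_+))\cap H^1(0,T;L^2(U_+))\hookrightarrow C([0,T];H^1(U_+))$ (\cite[Theorem~4]{Evans10}) to carry the mean-value-theorem and density arguments uniformly in $t$ --- which is precisely why the stronger hypothesis is stated. You instead apply Lemma~\ref{lem:T eps f 2} as a black box to almost every time slice $v(\tau)\in H^1(U_+)$ and supply the one genuinely new ingredient needed to integrate in time: the uniform-in-$\eps$ operator bound $\norm{(v(\tau)\circ\cT^\eps-v(\tau))/\eps}{L^2(U_+)}\le C\norm{\nabla v(\tau)}{L^2(U_+)}$, obtained from the line-integral representation, Minkowski's integral inequality and the uniform positivity of the Jacobian (which is~\eqref{e:vbn 4} rather than~\eqref{e:st 4}, a harmless misattribution); dominated convergence along arbitrary sequences $\eps_n\goto 0$ then finishes both limits. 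This route buys two things: you avoid rerunning the density argument in the parabolic setting, and, as you observe, only $v\in L^2(0,T;H^1(U_+))$ is actually used, so your proof establishes the lemma under a weaker hypothesis than stated. Two small points are worth making explicit in a write-up: the segment $\vecx+s\eps\wtd V(\vecx)$, $s\in[0,1]$, stays inside $U_+$ for small $\eps$ because $\supp\wtd V=U^*\Subset U_+$ (so the line-integral representation is legitimate), and the difference-quotient bound proved for smooth $v$ passes to $H^1(U_+)$ because the map $w\mapsto (w\circ\cT^\eps-w)/\eps$ is linear and bounded from $H^1(U_+)$ to $L^2(U_+)$ uniformly in small $\eps$.
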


\begin{proof}
This proof can be obtained by employing similar 
arguments as used in the proof 
of Lemma~\ref{lem:T eps f 2}, noting that $v\in 
C(0,T; H^1(U_+))$ 
(\cite[Theorem~4]{Evans10}).

\end{proof}

\subsection{Material and shape 
derivatives}\label{subsec:mat 
shape der}

\begin{definition}\label{def:Mat-ShapeDeriv}
For any sufficiently small $\eps$, let $v^\eps$ 
be 
an 
element in 
$H^1(U^\eps)$. 
The material derivative of $v^\eps$, denoted by 
$\dot v$, 
is defined by
\begin{equation}\label{MaterialDeriv-def}
\dot v
:=
\lim_{\eps\goto 0}
\frac{v^\eps\circ\cT^\eps - v^0}{\eps},
\end{equation}
if the limit exists in the corresponding space 
$H^1(U^0)$.
The \textit{shape derivative} of $v^\eps$ is 
defined 
% \nolinebreak[9] 
by 
\begin{equation}\label{ShapeDeriv-def}
v'
=
\dot v - \nabla v^0\cdot V.
\end{equation}
\end{definition}

\begin{lemma}\label{lem:v shap mat K}
If $v'$ is a shape derivative of $v^\eps\in 
H^1(U^\eps)$, 
then for 
any compact set $K\subset\subset U^0$ we have 
\begin{equation}\label{ShapeDeriv-limit}
v'
=
\lim_{\eps\goto 0}
\frac{v^\eps-v^0}{\eps}\quad
\text{in}
\quad
H^1(K).
\end{equation}
\end{lemma}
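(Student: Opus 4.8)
The plan is to compare $v^\eps$ and $v^0$ by transporting both to the fixed domain $U^0$ through $\cT^\eps$, where the material derivative already controls the comparison, and then to undo the transport on the compact set $K$. Fix $K\subset\subset U^0$ and put $d_0:=\distt(K,\G^0)>0$. Since $\cT^\eps(\vecx)=\vecx+\eps V(\vecx)$ with $V\in W^{1,\infty}(U_+)$, for all sufficiently small $\eps$ we have $\norm{\cT^\eps-\mathrm{id}}{L^\infty(U_+)}\le \eps\norm{V}{L^\infty(U_+)}<d_0/2$; together with~\eqref{D D eps1} this ensures $K\subset U^\eps\cap U^0$ and that $\cT^\eps(K)$ and $(\cT^\eps)^{-1}(K)$ remain inside a fixed compact subset of $U^0\cap U^\eps$. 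Hence $v^\eps-v^0$ is well defined on $K$ and every composition below makes sense. Note also that, since $v'=\dot v-\nabla v^0\cdot V$ with $\dot v\in H^1(U^0)$, the assertion of convergence in $H^1(K)$ presupposes $v'\in H^1(K)$, hence $\nabla v^0\cdot V\in H^1(K)$; I shall use the interior regularity $v^0\in H^2$ on a neighbourhood of $K$ that guarantees this and that holds for the solutions of interest.

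Writing $w^\eps:=v^\eps\circ\cT^\eps$ and $S^\eps:=(\cT^\eps)^{-1}$, one has $v^\eps=w^\eps\circ S^\eps$, and therefore on $K$
\begin{align}
\frac{v^\eps-v^0}{\eps}
=
\Big(\frac{w^\eps-v^0}{\eps}\Big)\circ S^\eps
+
\frac{v^0\circ S^\eps-v^0}{\eps}.
\notag
\end{align}
The first summand tends to $\dot v$ in $H^1(K)$. Indeed, by Definition~\ref{def:Mat-ShapeDeriv}, $g^\eps:=(w^\eps-v^0)/\eps\goto\dot v$ in $H^1(U^0)$; splitting $g^\eps\circ S^\eps-\dot v=(g^\eps-\dot v)\circ S^\eps+(\dot v\circ S^\eps-\dot v)$, the first piece is bounded in $H^1(K)$ by $C\norm{g^\eps-\dot v}{H^1(U^0)}$ through a change of variables and the Jacobian bounds of Lemmas~\ref{l:9M1a} and~\ref{l:T eps prop} (in the spirit of~\eqref{9M1f1}), while the second piece tends to $0$ in $H^1(K)$ because $\dot v$ is a fixed $H^1$ function and $S^\eps\goto\mathrm{id}$ with $J_{\cT^\eps}\goto I$, a convergence reduced to smooth $\dot v$ by density. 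Only $\dot v\in H^1(U^0)$ is used here, so no second–order control on $v^\eps$ is needed.

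It remains to identify the limit of the transport term $\big(v^0\circ S^\eps-v^0\big)/\eps$. The inverse flow has velocity $-V$: from $\cT^\eps(S^\eps(\vecy))=\vecy$ one gets $S^\eps(\vecy)=\vecy-\eps V(S^\eps(\vecy))$, so $(S^\eps-\mathrm{id})/\eps\goto -V$ uniformly on $U_+$. Repeating the mean–value and density argument used for Lemma~\ref{lem:T eps f 2}, now along $S^\eps$ and using $v^0\in H^2$ near $K$, yields $\big(v^0\circ S^\eps-v^0\big)/\eps\goto -V\cdot\nabla v^0$ in $H^1(K)$. Adding the two limits gives $(v^\eps-v^0)/\eps\goto \dot v-V\cdot\nabla v^0=v'$ in $H^1(K)$, which is~\eqref{ShapeDeriv-limit}.

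The main obstacle is exactly this last step: upgrading the transport term from $L^2(K)$ convergence, which holds for $v^0\in H^1$ just as in Lemma~\ref{lem:T eps f 2}, to $H^1(K)$ convergence. Differentiating the difference quotient produces terms containing $\nabla^2 v^0$ and $\nabla\big((S^\eps-\mathrm{id})/\eps\big)$, so one must combine the interior $H^2$–regularity of $v^0$ with uniform $W^{1,\infty}$–bounds on the (now $\eps$–dependent) velocity field of $S^\eps$; securing that the composition operators $u\mapsto u\circ S^\eps$ are bounded on $H^1(K)$ uniformly in $\eps$, via the Jacobian estimates of Lemmas~\ref{l:9M1a} and~\ref{l:T eps prop}, is the technical heart of the proof.
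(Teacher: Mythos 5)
Your argument is correct (granted the interior regularity you flag) but follows a genuinely different route from the paper's. The paper's proof is a formal chain--rule computation: it sets $\tilde v(\eps,\vecx):=v^\eps(\vecx)$ and $\mathcal T(\eps,\vecx):=\cT^\eps(\vecx)$, writes $\dot v=\frac{\partial}{\partial\eps}\tilde v(\eps,\mathcal T(\eps,\cdot))\big|_{\eps=0}$, expands by the chain rule to get $\dot v=\partial_\eps\tilde v(0,\cdot)+\nabla v^0\cdot V$, and reads off $v'=\partial_\eps\tilde v(0,\cdot)=\lim_{\eps\goto0}(v^\eps-v^0)/\eps$ in $H^1(K)$. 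This presupposes that $(\eps,\vecx)\mapsto\tilde v(\eps,\vecx)$ is jointly differentiable and that the partial $\eps$-derivative at fixed $\vecx$ exists a priori --- which is essentially what the lemma is meant to establish --- so it is best read as a heuristic. Your decomposition $(v^\eps-v^0)/\eps=\big((v^\eps\circ\cT^\eps-v^0)/\eps\big)\circ(\cT^\eps)^{-1}+\big(v^0\circ(\cT^\eps)^{-1}-v^0\big)/\eps$ replaces the chain rule by two separately justified limits and makes explicit where each hypothesis enters: the material derivative handles the first summand using only $H^1$ data and the uniform boundedness of the composition operators, while the transport term is the sole place where the interior regularity $v^0\in H^2$ near $K$ is needed. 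That extra hypothesis is not spurious: without it $\nabla v^0\cdot V$, and hence $v'$, need not belong to $H^1(K)$, so the $H^1(K)$ convergence asserted by the lemma could not hold in the first place; the paper's formal argument silently assumes this as well. In short, your proof buys rigour and an explicit accounting of the regularity actually used, at the cost of stating an added (but in fact necessary) assumption; the paper's proof buys brevity at the cost of being non-rigorous about exactly the points you isolate.
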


\begin{proof}
Given $K\subset\subset U^0$, 
there exists an $\eps_0>0$ such that 
$K\subset\subset 
U^\eps$ for all 
$0\le \eps\le \eps_0$.
We denote by $\mathcal T: 
[0,\eps_0]\times\R^3\goto\R^3$ 
the mapping given by
\[
\mathcal T(\eps,\vecx)
:=
\cT^{\eps}(\vecx),
\quad
\forall 
(\eps,\vecx)
\in 
[0,\eps_0]\times\R^3.
\]
We also denote  by $\tilde v(\eps, \vecx) := 
v^\eps(\vecx)$ 
for any $0\le \eps \le \eps_0$ and 
$\vecx\in U^\eps$.
By the definition of material derivative,
we have
\[
\dot{v}
=
\frac{\partial}{\partial\eps}
\tilde v(\eps, 
\mathcal{T}(\eps,\cdot))\Big{|}_{\eps=0},
\quad
\text{in}
\quad
H^1(K).
\]
Applying the chain  rule, we obtain
\begin{align}
\dot v
&
=
\frac{\partial\tilde v}{\partial \eps}
(0,\mathcal{T}(0,\cdot)) 
+
\nabla \tilde v(0,\mathcal{T}(0,\cdot))
\cdot
\frac{\partial \mathcal{T}(0,\cdot)}{\partial 
\eps}
\notag
\\
&
=
\frac{\partial \tilde v(0,\cdot)}{\partial \eps}
+
\nabla v^0
\cdot
V,
\quad
\text{in}
\quad
H^1(K).
\notag
\end{align}
This implies 
\[
v'
=
\frac{\partial \tilde v(0,\cdot)}{\partial \eps}
=
\lim_{\eps\goto 0}
\frac{v^\eps-v^0}{\eps}
\quad
\text{in}
\quad
H^1(K).
\]
\end{proof}

\begin{remark}\label{rem:v sha vx}
The limit in the above lemma does not hold in 
$H^1(U^0)$ 
since,
in general, $v^\eps$ is not defined in $U^0$.
%does not belong to $H^1(U^0)$.
\end{remark}

Similar definitions can be introduced for vector 
functions 
$\vecv$. 
The following lemmas state some useful properties 
of 
material and 
shape derivatives which will be used frequently 
in 
the 
remainder
of the paper.

\begin{lemma}\label{pro:mat sha pro}
Let $ \dot v$, $\dot w$ be material derivatives, 
and
$v'$, $w'$ be shape derivatives of
 $v^\eps$, $w^\eps$ in $H^1(U^\eps)$, $\eps\ge 
0$, 
respectively. 
Then the following statements are true.
\begin{enumerate}[(i)]
\item\label{ite:t1}  
The material and shape derivatives of the product 
$v^\eps 
w^\eps$ 
are $
\dot v w^0
+
v^0\dot w$ and 
$
v' w^0
+
v^0 w'$, respectively.
\item\label{ite:t4}
The material and shape derivatives of the 
quotient 
$v^\eps/ 
w^\eps$ 
are $
(\dot v w^0
-
v^0\dot w)/(w^0)^2$ and 
$
(v' w^0
-
v^0 w')/(w^0)^2$, respectively,
provided that all the fractions are well-defined.
\item\label{ite:t3}
If $v^\eps = v$ for all $\eps\ge 0$, then $ \dot 
v 
= \nabla 
v^0\cdot V
= \nabla v\cdot V$
and $v' = 0$.
\item\label{ite:t2}
If 
\[
\mathcal{J}_1(U^\eps) := 
\displaystyle\int_{U^\eps} 
v^\eps\,d\vecx,
\quad
\mathcal{J}_2(U^\eps) := 
\displaystyle\int_{\G^\eps} 
v^\eps\,d\sigma,
\]
and
\[
d\mathcal{J}_i(U^\eps)|_{\eps=0}
:=
\lim_{\eps\goto 0}
\frac{J_i(U^\eps)-J_i(U^0)}{\eps},
\
i=1,2,
\]
then 
\[
d\mathcal{J}_1(U^\eps)|_{\eps=0}
=
\int_{U^0} v'\,d\vecx
+
\int_{\G^0} v^0
\inpro{V}{{\vecn^0}}
\,d\sigma
\]
and 
\[
d\mathcal{J}_2(U^\eps)|_{\eps=0}
=
\int_{\G^0} v'\,d\sigma
+
\int_{\G^0} 
\left(
\frac{\partial v^0}{\partial n}
+
\divv_{\G^0}(\vecn^0)\,
v^0
\right)
\inpro{V}{{\vecn^0}}
\,d\sigma.
\]
\end{enumerate}
\end{lemma}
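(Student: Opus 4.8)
The plan is to obtain (i)--(iii) directly from the definitions of the material and shape derivatives together with the transformation lemmas already proved, reserving the genuine work for the two integral formulas in (iv). For (i) I would use that composition with $\cT^\eps$ preserves products, $(v^\eps w^\eps)\dcirc\cT^\eps = (v^\eps\dcirc\cT^\eps)(w^\eps\dcirc\cT^\eps)$, and insert the intermediate term $v^0(w^\eps\dcirc\cT^\eps)$ to split
\begin{align}
\frac{(v^\eps w^\eps)\dcirc\cT^\eps - v^0 w^0}{\eps}
=
\frac{(v^\eps\dcirc\cT^\eps) - v^0}{\eps}\,(w^\eps\dcirc\cT^\eps)
+
v^0\,\frac{(w^\eps\dcirc\cT^\eps) - w^0}{\eps}.
\notag
\end{align}
Since the existence of $\dot w$ forces $w^\eps\dcirc\cT^\eps\goto w^0$, passing to the limit gives the material derivative $\dot v\,w^0 + v^0\dot w$; the shape-derivative claim then follows by subtracting $\nabla(v^0 w^0)\cdot V = (\nabla v^0\cdot V)w^0 + v^0(\nabla w^0\cdot V)$ and regrouping through $v' = \dot v - \nabla v^0\cdot V$. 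Part (ii) is the same add-and-subtract device applied to the numerator $(v^\eps\dcirc\cT^\eps)w^0 - v^0(w^\eps\dcirc\cT^\eps)$ over the denominator $(w^\eps\dcirc\cT^\eps)w^0$, with $w^\eps\dcirc\cT^\eps\goto w^0$ controlling the denominator; again the shape-derivative form is purely algebraic once the material derivative is known.

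For (iii), when $v^\eps\equiv v$ the material derivative reduces to $\dot v = \lim_{\eps\goto 0}(v\dcirc\cT^\eps - v)/\eps$, which by~\eqref{e:T g 2} of Lemma~\ref{lem:T eps f 2} equals $V\cdot\nabla v = \nabla v^0\cdot V$; hence $v' = \dot v - \nabla v^0\cdot V = 0$.

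The substantive part is (iv). For $\mathcal{J}_1$ I would change variables $\vecy = \cT^\eps(\vecx)$ to obtain $\mathcal{J}_1(U^\eps) = \int_{U^0}(v^\eps\dcirc\cT^\eps)\ga(\eps,\cdot)\,d\vecx$, so that the difference quotient equals $\int_{U^0}\frac{(v^\eps\dcirc\cT^\eps)\ga(\eps,\cdot) - v^0}{\eps}\,d\vecx$. Splitting the integrand as $\frac{(v^\eps\dcirc\cT^\eps) - v^0}{\eps}\ga(\eps,\cdot) + v^0\frac{\ga(\eps,\cdot) - 1}{\eps}$ and invoking~\eqref{9M1h} and~\eqref{9Oct2c} of Lemma~\ref{l:9M1a} yields the limit $\int_{U^0}(\dot v + v^0\divv V)\,d\vecx$; writing $\dot v = v' + \nabla v^0\cdot V$ and recognizing $\nabla v^0\cdot V + v^0\divv V = \divv(v^0 V)$, the divergence theorem produces the boundary term $\int_{\G^0}v^0\inpro{V}{\vecn^0}\,d\sigma$, which is exactly the asserted formula.

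The boundary integral $\mathcal{J}_2$ is where the main obstacle lies. Pulling back to $\G^0$, the surface element acquires a tangential Jacobian $\omega(\eps,\cdot)$, and I must establish $\tfrac{\partial}{\partial\eps}\omega(\eps,\cdot)\big|_{\eps=0} = \divv_{\G^0}V$, which gives the limit $\int_{\G^0}(\dot v + v^0\divv_{\G^0}V)\,d\sigma$. Decomposing $V = V_\tau + \inpro{V}{\vecn^0}\vecn^0$ into tangential and normal parts, using the identity $\divv_{\G^0}(\phi\,\vecn^0) = \phi\,\divv_{\G^0}(\vecn^0)$ for the normal component and the closed-surface integration by parts $\int_{\G^0}v^0\,\divv_{\G^0}V_\tau\,d\sigma = -\int_{\G^0}\nabla_{\G^0}v^0\cdot V_\tau\,d\sigma$ for the tangential one, the tangential pieces cancel against the tangential part of $\nabla v^0\cdot V$ hidden in $\dot v$, leaving precisely $\int_{\G^0}v'\,d\sigma + \int_{\G^0}\big(\frac{\partial v^0}{\partial n} + \divv_{\G^0}(\vecn^0)\,v^0\big)\inpro{V}{\vecn^0}\,d\sigma$. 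I expect computing the $\eps$-derivative of the tangential Jacobian and justifying the surface integration by parts on the merely $C^{1,1}$ manifold $\G^0$ to be the principal technical hurdles, the rest being bookkeeping analogous to the $\mathcal{J}_1$ case.
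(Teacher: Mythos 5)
Your proposal is correct and follows the standard shape-calculus route: the paper itself disposes of (i)--(iii) as ``elementary calculations'' and simply cites \cite[pages~113, 116]{SokZol92} for (iv), and your computations (the add-and-subtract splittings, the pullback with Jacobian $\ga(\eps,\cdot)$ for $\mathcal J_1$, and the tangential Jacobian plus tangential Stokes formula for $\mathcal J_2$) are precisely the arguments found there. The only point you rightly flag as needing care --- the identity $\partial_\eps\omega(\eps,\cdot)|_{\eps=0}=\divv_{\G^0}V$ for the surface Jacobian and the integration by parts on the $C^{1,1}$ surface --- is exactly what the cited reference supplies, so there is no gap relative to the paper's own treatment.
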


\begin{proof}
Statements~{\eqref{ite:t1}--\eqref{ite:t3}} can 
be 
obtained 
by using 
elementary calculations. Statement~\eqref{ite:t2}
is proved in~\cite[pages~113, 116]{SokZol92}.
\end{proof}

The following lemma, which is proved 
in~\cite{ChePhaTra15}, gives the material and 
shape derivatives of the normal field 
$\vecn^\eps$ to the surfaces $\G^\eps$.

\begin{lemma}\label{lem:n shape}
The material and shape derivatives of the normal 
field
$\vecn^\eps$ are given by 
\[
\dot{\vecn} 
=
\vecn' = -\nabla_{\G^0}\kappa.
\]
\end{lemma}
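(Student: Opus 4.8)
The plan is to obtain $\dot{\vecn}$ by a direct computation from a local parametrization of $\G^\eps$, and then to read off $\vecn'$ from Definition~\ref{def:Mat-ShapeDeriv}. Recall that the velocity field here is purely normal, $V = \kappa\,\vecn^0$, because $\G^\eps = \{\vecx + \eps\kappa(\vecx)\vecn^0(\vecx) : \vecx\in\G^0\}$. I fix a point of $\G^0$ and choose a local parametrization $\phi=\phi(u_1,u_2)$ of $\G^0$, so that $\phi^\eps := \cT^\eps\circ\phi = \phi + \eps(\kappa\circ\phi)(\vecn^0\circ\phi)$ parametrizes $\G^\eps$ near the image point. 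With $\tau_i := \partial_{u_i}\phi$ the coordinate tangent vectors, the (unnormalized) normal to $\G^\eps$ is $N^\eps := \partial_{u_1}\phi^\eps\times\partial_{u_2}\phi^\eps$ and $\vecn^\eps\circ\phi^\eps = N^\eps/|N^\eps|$.

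First I differentiate the unit vector $N^\eps/|N^\eps|$ at $\eps=0$. Since $N^0 = \tau_1\times\tau_2$ is parallel to $\vecn^0$, differentiation projects out the normal component,
\begin{equation*}
\dot{\vecn} = \frac{1}{|N^0|}\bigl(I - \vecn^0\otimes\vecn^0\bigr)\dot N, \qquad \dot N := \frac{d}{d\eps}\Big|_{\eps=0} N^\eps,
\end{equation*}
so that only the tangential part of $\dot N$ matters. Expanding $\partial_{u_i}\phi^\eps = \tau_i + \eps\bigl[(\partial_{u_i}\kappa)\vecn^0 + \kappa\,\partial_{u_i}\vecn^0\bigr]$, the curvature terms $\kappa\,\partial_{u_i}\vecn^0$ are tangential (as $|\vecn^0|=1$), so their contributions to $\dot N$ are cross products of two tangent vectors and therefore point along $\vecn^0$; the projection annihilates them. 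Only the gradient terms $(\partial_{u_1}\kappa)(\vecn^0\times\tau_2)$ and $(\partial_{u_2}\kappa)(\tau_1\times\vecn^0)$ survive. Evaluating these in a right-handed orthonormal tangent frame, where $|N^0|=1$, $\vecn^0\times\tau_2 = -\tau_1$ and $\tau_1\times\vecn^0 = -\tau_2$, collapses the right-hand side to $-(\partial_{u_1}\kappa)\tau_1 - (\partial_{u_2}\kappa)\tau_2 = -\nabla_{\G^0}\kappa$, i.e. $\dot{\vecn} = -\nabla_{\G^0}\kappa$.

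For the shape derivative I then invoke the vector analogue of Definition~\ref{def:Mat-ShapeDeriv}, $\vecn' = \dot{\vecn} - (\nabla\vecn^0)\,V$, where $(\nabla\vecn^0)\,V$ is the directional derivative of $\vecn^0$ along $V$. Extending $\vecn^0$ to a tubular neighborhood of $\G^0$ as the gradient of the signed distance function, its Jacobian $\nabla\vecn^0$ is symmetric and satisfies $(\nabla\vecn^0)\vecn^0 = 0$; since $V = \kappa\,\vecn^0$, the correction $(\nabla\vecn^0)V = \kappa(\nabla\vecn^0)\vecn^0$ vanishes. Hence $\vecn' = \dot{\vecn} = -\nabla_{\G^0}\kappa$, as claimed.

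I expect the main obstacle to be the bookkeeping forced by the fact that $\vecn^0$ is defined only on $\G^0$: one must check that the material derivative is taken along the transported surface and that the shape-derivative correction is interpreted through an extension of $\vecn^0$ whose particular choice does not affect the intrinsic result. Passing to an orthonormal moving frame is the cleanest device for simultaneously controlling the cancellation of the curvature terms and fixing the sign of the surviving gradient terms.
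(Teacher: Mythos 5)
Your computation is correct, but it is worth noting that the paper does not actually prove this lemma at all: it simply cites \cite{ChePhaTra15} (which in turn relies on the shape-calculus identities of Soko{\l}owski--Zol\'esio, where one finds the general formula $\vecn'=-\nabla_{\G^0}(V\cdot\vecn^0)$, specialised here to $V=\kappa\,\vecn^0$). Your argument is therefore a genuinely different, self-contained route: a direct local-parametrisation computation of $\partial_\eps\bigl(N^\eps/|N^\eps|\bigr)$ at $\eps=0$, with the projection $I-\vecn^0\otimes\vecn^0$ killing the curvature terms $\kappa\,\partial_{u_i}\vecn^0$ (tangential, hence contributing only normal components to the cross product) and the orthonormal-frame identities $\vecn^0\times\tau_2=-\tau_1$, $\tau_1\times\vecn^0=-\tau_2$ fixing the sign of the surviving gradient terms. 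The passage from $\dot{\vecn}$ to $\vecn'$ via the signed-distance extension, for which $(\nabla\vecn^0)\vecn^0=0$ by symmetry of $\nabla^2 d$ and $|\nabla d|=1$, is also sound and correctly explains why the two derivatives coincide for a purely normal perturbation. What your approach buys is transparency and independence from the cited reference; what it costs is that the pointwise computation implicitly assumes enough smoothness ($\G^0\in C^{1,1}$, $\kappa\in C^{0,1}$ give only a.e.\ differentiability of $\vecn^0$ and $\kappa$, so the identity should be read as holding a.e.\ on $\G^0$, in keeping with the regularity assumed elsewhere in the paper), and that the choice of an orthonormal frame at the given point should be stated as such (always possible pointwise, and harmless since the identity is pointwise). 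Neither point is a gap.
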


\subsection{Shape derivative for Dirichlet 
conditions}

In this subsection, existence of the shape 
derivative of the solution to heat equation with 
Dirichlet condition will be clarified.
We consider the perturbed initial-boundary value 
problem 
\begin{subequations}\label{e:sh1}
\begin{align}
u_t^\eps
-
\Delta u^\eps
&
=
f^\eps
\quad 
\text{in }
Q_\cT^{\eps}:=
(0,T)\times U^\eps
\label{e:sh1a}
\\
u^\eps 
&
=
0
\quad 
\text{on }
\Sigma_\cT^\eps:=
(0,T)\times \G^\eps 
\label{e:sh1b}
\\
u^\eps|_{t=0}
&
=
g^\eps
\quad 
\text{in }
U^\eps.
\label{e:sh1c}
\end{align}
\end{subequations}
Meanwhile, the reference initial-boundary value 
problem on 
the reference domain $U^0$ is given by 
\begin{subequations}\label{e:sh2}
\begin{align}
u_t^0
-
\Delta u^0
&
=
f^0
\quad 
\text{in }
Q_T^0
\label{e:sh2a}
\\
u^0
&
=
0
\quad 
\text{on }
\Sigma_T^0
\label{e:sh2b}
\\
u^0|_{t=0}
&
=
g^0
\quad 
\text{in }
U^0.
\label{e:sh2c}
\end{align}
\end{subequations}
The weak formulation of~\eqref{e:sh1} reads as 
follows: 
given $f^\eps\in L^2(Q_\cT^{\eps})$ and 
$g^\eps\in 
L^2(U^\eps)$, find 
$u^\eps\in L^2(0,T; H_0^1(U^\eps)) \cap 
C^0([0,T]; 
L^2(U^\eps))$ such that 
\begin{equation}\label{e:weak for}
\begin{cases}
\frac{d}{dt}\inpro{u^\eps(t)}{v}_{L^2(U^\eps)}
+
a(u^\eps(t),v; U^\eps)
=
\inprod{f^\eps(t)}{v}_{L^2(U^\eps)}
\quad 
\forall v\in H_0^1(U^\eps)
\\
u^\eps(0)
=
g^\eps,
\end{cases}
\end{equation}
where the bilinear form $a(\cdot,\cdot; U)$ 
associated with a domain $U$ is defined 
by 
\begin{equation}\label{e:a bilinear}
a(v,w; U)
:=
\int_U
\nabla v\cdot\nabla w\,d\vecx,
\quad 
v,w\in H^1(U).
\end{equation}
In this paper, we assume that the sequences 
$\sett{f^\eps}_{0<\eps<\eps_0}$ and 
$\sett{g^\eps}_{0<\eps<\eps_0}$ satisfy
\begin{subequations}\label{fg eps cond}
\begin{align}
\lim_{\eps\goto 0}
\norm{f^\eps\dcirc \cT^\eps - f^0}{L^2(0,T; 
L^2(U^0))}
&
=
0,
\label{feps f}
\\
\lim_{\eps\goto 0}
\norm{g^\eps\circ\cT^\eps - g^0}{H^1(U^0)}
&
=
0.
\label{geps g}
\end{align}
\end{subequations}
The above equations suggest that the 
$\sett{\norm{f^\eps\dcirc \cT^\eps}{L^2(0,T; 
L^2(U^0))}}_{\eps>0}$ and 
$\sett{\norm{g^\eps\circ\cT^\eps}{H^1(U^0)}}_{
\eps>0}$ are bounded for sufficiently small 
$\eps$.
This together with the assumption that $V\in 
W^{1,\infty}(U_+)$ implies that for sufficiently 
small $\eps$, there hold
\begin{subequations}\label{fg eps1}
\begin{align}
\norm{f^\eps}{L^2(0,T; 
L^2(U^\eps))}
\le 
M_1,
\qquad
&
\norm{f^\eps\dcirc \cT^\eps}{L^2(0,T; 
L^2(U^0))}
\le 
M_1,
\label{feps 1}
\\
\norm{g^\eps}{H^1(U^\eps)}
\le 
M_2,
\qquad
&
\norm{g^\eps\circ\cT^\eps}{H^1
(U^0)}
\le
M_2,
\label{geps 1}
\end{align}
\end{subequations}
where $M_1$ and $M_2$ are two positive 
constants depending on $f^0$ and $g^0$. 

The following lemmas (see~\cite[Page 
366]{QuaVal97}) state  
the unique existence of the weak solution to the 
initial-boundary value problem with the Dirichlet 
boundary 
condition.

\begin{lemma}\label{l:unique Dir}
Given $f^\eps\in L^2(Q_\cT^{\eps})$ and 
$g^\eps\in 
L^2(U^\eps)$, there 
exists a unique solution $u^\eps\in L^2(0,T; 
H_0^1(U^\eps))\cap C^0([0,T]; 
L^2(U^\eps))$ to~\eqref{e:weak for}. Moreover, 
$\partial 
u^\eps/\partial t\in L^2(0,T; H^{-1}(U^\eps))$ 
and 
the 
energy estimate 
\begin{equation}\label{e:ene est}
\begin{aligned}
\norm{u^\eps(t)}{L^2(U^\eps)}^2
+
\int_0^t
\norm{u^\eps(\tau)}{H^1(U^\eps)}^2\,d\tau
&
\lesssim 
\norm{g^\eps}{L^2(U^\eps)}^2
+
\int_0^t
\norm{f^\eps(\tau)}{L^2(U^\eps)}^2\,d\tau
\end{aligned}
\end{equation}
holds for each $t\in [0,T]$.
Here, the constant implicitly included in the 
above 
inequality is independent of $T$.
\end{lemma}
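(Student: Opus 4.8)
The plan is to establish the result by the Faedo--Galerkin method: construct finite-dimensional approximations, derive an energy estimate uniform in the discretisation level (and, crucially, independent of $T$), pass to the limit, and close with a standard energy argument for uniqueness. Throughout, the domain $U^\eps$ is fixed and bounded, since $U_-\Subset U^\eps\Subset U_+$, so the Poincar\'e inequality of Lemma~\ref{l:Poincare inequality} is available on $H_0^1(U^\eps)$, with Poincar\'e constant $C_P$ depending only on $U^\eps$. First I would fix a basis $\sett{w_k}_{k\ge1}$ of $H_0^1(U^\eps)$ which is orthonormal in $L^2(U^\eps)$ (for instance the Dirichlet eigenfunctions of $-\Delta$ on $U^\eps$) and seek $u_m(t)=\sum_{k=1}^m d_k^m(t)\,w_k$ solving
\begin{equation*}
\inpro{u_m'(t)}{w_k}_{L^2(U^\eps)} + a(u_m(t),w_k;U^\eps) = \inpro{f^\eps(t)}{w_k}_{L^2(U^\eps)},\qquad k=1,\dots,m,
\end{equation*}
with $u_m(0)$ the $L^2(U^\eps)$-projection of $g^\eps$ onto $\spann\sett{w_1,\dots,w_m}$. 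This is a linear ODE system with $L^2(0,T)$ data, hence admits a unique absolutely continuous solution on $[0,T]$.

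Multiplying by $d_k^m(t)$ and summing over $k$ yields the energy identity
\begin{equation*}
\tfrac12\tfrac{d}{dt}\norm{u_m(t)}{L^2(U^\eps)}^2 + \snorm{u_m(t)}{H^1(U^\eps)}^2 = \inpro{f^\eps(t)}{u_m(t)}_{L^2(U^\eps)},
\end{equation*}
where $a(u_m,u_m;U^\eps)=\snorm{u_m}{H^1(U^\eps)}^2$. The key point, which is what makes the constant in \eqref{e:ene est} independent of $T$, is to bound the right-hand side by Cauchy--Schwarz and Young and then absorb the lower-order term using Lemma~\ref{l:Poincare inequality}, namely
\begin{equation*}
\inpro{f^\eps}{u_m}_{L^2(U^\eps)} \le \tfrac{C_P}{2}\norm{f^\eps}{L^2(U^\eps)}^2 + \tfrac{1}{2C_P}\norm{u_m}{L^2(U^\eps)}^2 \le \tfrac{C_P}{2}\norm{f^\eps}{L^2(U^\eps)}^2 + \tfrac12\snorm{u_m}{H^1(U^\eps)}^2.
\end{equation*}
Integrating in time and using $\norm{u_m(0)}{L^2(U^\eps)}\le\norm{g^\eps}{L^2(U^\eps)}$ gives \eqref{e:ene est} for $u_m$, uniformly in $m$, with a constant depending only on $C_P$; a naive Gr\"onwall step would instead introduce an $e^{CT}$ factor, so absorbing via Poincar\'e is essential here.

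The uniform bound furnishes a subsequence with $u_m\rightharpoonup u$ weakly in $L^2(0,T;H_0^1(U^\eps))$. Testing the Galerkin equations against $\phi(t)w_k$ with $\phi\in C^1[0,T]$, $\phi(T)=0$, integrating by parts in time and letting $m\goto\infty$, I would verify that $u$ satisfies \eqref{e:weak for} for every $v\in H_0^1(U^\eps)$, by density of $\bigcup_m\spann\sett{w_1,\dots,w_m}$ in $H_0^1(U^\eps)$; the boundary term at $t=0$ identifies the initial condition $u(0)=g^\eps$. From $\inpro{u_t}{v}_{L^2(U^\eps)}=\inpro{f^\eps}{v}_{L^2(U^\eps)}-a(u,v;U^\eps)$ together with $\abs{a(u,v;U^\eps)}\le\norm{u}{H^1(U^\eps)}\norm{v}{H^1(U^\eps)}$ and $\abs{\inpro{f^\eps}{v}_{L^2(U^\eps)}}\le\norm{f^\eps}{L^2(U^\eps)}\norm{v}{H^1(U^\eps)}$, one obtains $\norm{u_t}{H^{-1}(U^\eps)}\lesssim\norm{u}{H^1(U^\eps)}+\norm{f^\eps}{L^2(U^\eps)}$, so $u_t\in L^2(0,T;H^{-1}(U^\eps))$; the Gelfand-triple embedding $H_0^1(U^\eps)\hookrightarrow L^2(U^\eps)\hookrightarrow H^{-1}(U^\eps)$ then yields $u\in C^0([0,T];L^2(U^\eps))$.

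For uniqueness, the difference $w$ of two solutions solves the homogeneous problem with $f^\eps\equiv0$ and $g^\eps\equiv0$; testing with $v=w(t)$, which is legitimate since $w_t\in L^2(0,T;H^{-1}(U^\eps))$ and then $\inpro{w_t}{w}_{L^2(U^\eps)}=\tfrac12\tfrac{d}{dt}\norm{w}{L^2(U^\eps)}^2$, gives $\tfrac12\tfrac{d}{dt}\norm{w}{L^2(U^\eps)}^2+\snorm{w}{H^1(U^\eps)}^2=0$, whence $w\equiv0$. I expect the main obstacle to lie in the limit passage of the third paragraph: identifying the weak limit as a genuine solution, establishing the regularity $u_t\in L^2(0,T;H^{-1}(U^\eps))$, and in particular recovering the continuity in time and the initial datum, all of which rest on the Lions--Magenes lemma for the time-derivative pairing rather than on any routine computation.
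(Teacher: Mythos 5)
Your proof is correct, but it is worth noting that the paper does not actually prove this lemma at all: it simply cites \cite[Page~366]{QuaVal97} (Quarteroni--Valli's treatment of parabolic variational problems), so your self-contained Faedo--Galerkin argument is a genuine substitute rather than a reproduction. The substance of both routes is the same variational machinery -- Galerkin approximation, a uniform energy bound, weak compactness, and the Lions--Magenes lemma to recover $u\in C^0([0,T];L^2(U^\eps))$ and to justify testing with $u$ itself -- but your write-up adds one point that the citation glosses over and that the lemma explicitly claims: the constant in \eqref{e:ene est} is independent of $T$. The textbook energy estimate is usually closed with Gr\"onwall and therefore carries an $e^{CT}$ factor; you correctly observe that absorbing the lower-order term $\tfrac{1}{2C_P}\norm{u_m}{L^2(U^\eps)}^2$ into $\tfrac12\snorm{u_m}{H^1(U^\eps)}^2$ via Lemma~\ref{l:Poincare inequality} (available because $U^\eps\Subset U_+$ is bounded uniformly in $\eps$) removes the Gr\"onwall step entirely and yields the $T$-uniform constant. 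The only step you leave implicit is how the pointwise-in-$t$ bound $\norm{u^\eps(t)}{L^2(U^\eps)}^2$ survives the passage to the limit; the cleanest fix is to note that once $u_t^\eps\in L^2(0,T;H^{-1}(U^\eps))$ is established, the energy identity can be rerun directly on $u^\eps$ by taking $v=u^\eps(t)$ in \eqref{e:weak for}, giving \eqref{e:ene est} for the limit without any lower-semicontinuity argument. This is routine and does not constitute a gap.
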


\begin{lemma}\label{l:unique Dir 2}
Given $f^\eps\in L^2(Q_\cT^{\eps})$ and 
$g^\eps\in 
H_0^1(U^\eps)$ for 
all sufficiently small $\eps>0$,  the 
solution 
$u^\eps$ 
to~\eqref{e:weak for} belongs to $L^\infty(0,T; 
H_0^1(U^\eps))\cap H^1(0,T; L^2(U^\eps))$ and 
satisfies 
\begin{equation}\label{e:ene est 2}
\begin{aligned}
\sup_{t\in (0,T)}
\norm{u^\eps(t)}{H^1(U^\eps)}^2 
+
\int_0^T 
\norm{u_t^\eps(t)}{L^2(U^\eps)}^2\,dt
&
\lesssim 
\norm{g^\eps}{H^1(U^\eps)}^2 
+
\int_0^T 
\norm{f^\eps(t)}{L^2(U^\eps)}^2\,dt,
\end{aligned}
\end{equation}
where the constant implicitly included in the 
above 
inequality is independent of $T$.
\end{lemma}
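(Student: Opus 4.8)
The plan is to prove the improved regularity and the bound~\eqref{e:ene est 2} by the Faedo--Galerkin method, using as test function the time derivative $u_t^\eps$ rather than $u^\eps$ itself. Existence and uniqueness of $u^\eps$, together with the basic regularity $u^\eps\in L^2(0,T;H_0^1(U^\eps))\cap C^0([0,T];L^2(U^\eps))$, are already guaranteed by Lemma~\ref{l:unique Dir}, so it suffices to establish the stronger estimate~\eqref{e:ene est 2}; the asserted membership in $L^\infty(0,T;H_0^1(U^\eps))\cap H^1(0,T;L^2(U^\eps))$ is then a direct consequence of that estimate.

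First I would fix $\eps$ and introduce a Galerkin basis $\{w_k\}_{k\ge 1}$ of $H_0^1(U^\eps)$ consisting of eigenfunctions of the Dirichlet Laplacian on $U^\eps$, which are orthogonal both in $L^2(U^\eps)$ and with respect to the bilinear form $a(\cdot,\cdot;U^\eps)$ of~\eqref{e:a bilinear}. Writing $u_m^\eps(t)=\sum_{k=1}^m d_k^m(t)\,w_k$ for the Galerkin approximation solving the finite-dimensional analogue of~\eqref{e:weak for}, I would test with $v=\partial_t u_m^\eps(t)\in\spann\{w_1,\dots,w_m\}$. The $a$-orthogonality of the basis makes the identity $a(u_m^\eps,\partial_t u_m^\eps;U^\eps)=\tfrac12\tfrac{d}{dt}\,\snorm{u_m^\eps}{H^1(U^\eps)}^2$ transparent at the finite-dimensional level, and combining it with Cauchy--Schwarz and Young on the right-hand side yields the differential inequality
\[
\tfrac12\norm{\partial_t u_m^\eps(t)}{L^2(U^\eps)}^2
+\tfrac12\frac{d}{dt}\snorm{u_m^\eps(t)}{H^1(U^\eps)}^2
\le\tfrac12\norm{f^\eps(t)}{L^2(U^\eps)}^2 .
\]
Integrating from $0$ to $t$ gives
\[
\int_0^t\norm{\partial_t u_m^\eps(\tau)}{L^2(U^\eps)}^2\,d\tau
+\snorm{u_m^\eps(t)}{H^1(U^\eps)}^2
\le\snorm{u_m^\eps(0)}{H^1(U^\eps)}^2
+\int_0^t\norm{f^\eps(\tau)}{L^2(U^\eps)}^2\,d\tau .
\]

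Next I would bound the initial term by $\snorm{u_m^\eps(0)}{H^1(U^\eps)}\le\snorm{g^\eps}{H^1(U^\eps)}$, since $u_m^\eps(0)$ is the $H_0^1$-projection of $g^\eps$, and invoke Lemma~\ref{l:Poincare inequality} so that the seminorm controls the full $H^1$ norm on $H_0^1(U^\eps)$. Taking the supremum over $t\in(0,T)$ then furnishes a bound uniform in $m$; because coercivity of $a$ (again Poincar\'e) leaves no lower-order term, no Gr\"onwall factor is generated and the constant is independent of $T$, as claimed. Finally I would pass to the limit $m\to\infty$: the uniform estimates give weak-$*$ convergence of $u_m^\eps$ in $L^\infty(0,T;H_0^1(U^\eps))$ and weak convergence of $\partial_t u_m^\eps$ in $L^2(0,T;L^2(U^\eps))$, and lower semicontinuity of the norms under these convergences preserves~\eqref{e:ene est 2} in the limit, while uniqueness from Lemma~\ref{l:unique Dir} identifies the limit with $u^\eps$.

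The main obstacle is that the formal choice $v=u_t^\eps$ is not directly admissible, since one has no a priori control of $\partial_t u^\eps$ in $H_0^1(U^\eps)$; the role of the Galerkin scheme is precisely to legitimize both this test function and the time-differentiation identity at the finite-dimensional level, where $u_m^\eps$ is smooth in $t$, before passing to the limit. A secondary technical point is to ensure that the projection of $g^\eps$ onto $\spann\{w_1,\dots,w_m\}$ converges in $H_0^1(U^\eps)$ rather than merely in $L^2(U^\eps)$, which is exactly where the hypothesis $g^\eps\in H_0^1(U^\eps)$---stronger than the $L^2$ datum required in Lemma~\ref{l:unique Dir}---is indispensable.
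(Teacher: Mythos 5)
Your argument is correct, but note that the paper does not prove this lemma at all: it is stated as a known result with a citation to \cite[Page 366]{QuaVal97}, so there is no internal proof to compare against. What you have written is essentially the standard textbook derivation of that cited result (the improved-regularity energy estimate via Faedo--Galerkin, testing with $\partial_t u_m^\eps$, as in Evans or Quarteroni--Valli), and the details are sound: the eigenfunction basis of the Dirichlet Laplacian makes the $L^2$-projection of $g^\eps$ coincide with the $a$-orthogonal projection, which is exactly what justifies $\snorm{u_m^\eps(0)}{H^1(U^\eps)}\le\snorm{g^\eps}{H^1(U^\eps)}$ and the convergence of the initial data in $H_0^1(U^\eps)$; the absence of any Gr\"onwall step correctly yields a constant independent of $T$; and weak/weak-$*$ lower semicontinuity transfers the uniform bound to the limit. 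You also correctly identify the two genuine technical points --- the inadmissibility of $v=u_t^\eps$ at the continuous level and the role of the hypothesis $g^\eps\in H_0^1(U^\eps)$ --- so your self-contained proof is a legitimate substitute for the paper's citation.
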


The results in Lemmas~\ref{l:unique Dir} 
and~\ref{l:unique Dir 2},
and~\eqref{fg eps1} yield the following lemma.
\begin{lemma}\label{l:9Sep1d}
Assume that the conditions~\eqref{fg eps cond} 
and the assumptions in Lemma~\ref{l:unique Dir 2} 
are satisfied. There hold 
\begin{subequations}\label{9Sep1c}
\begin{align}
\sup_{t\in(0,T)}
\norm{u^\eps(t)}{L^2(U^\eps)}^2
+
\norm{u^\eps}{L^2(0,T;H^1(U^\eps))}^2
\lesssim 
M_3^2,
\label{9Sep1a}
\\
\sup_{t\in(0,T)}
\norm{u^\eps(t)}{H^1(U^\eps)}^2
+
\norm{u_t^\eps}{L^2(0,T; L^2(U^\eps))}^2 
\lesssim 
M_4^2,
\label{9Sep1b}
\end{align}
\end{subequations}
where $M_3$ and $M_4$ are positive 
constands depending on $f^0$ and $g^0$.
\end{lemma}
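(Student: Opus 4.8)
The plan is to combine the two energy estimates of Lemmas~\ref{l:unique Dir} and~\ref{l:unique Dir 2} with the uniform-in-$\eps$ data bounds recorded in~\eqref{fg eps1}, while making sure that the implicit constants do not deteriorate as $\eps\goto 0$. The structure is essentially bookkeeping: the right-hand sides of the two energy estimates are exactly $\norm{g^\eps}{\cdot}$ and $\int_0^T\norm{f^\eps(\tau)}{\cdot}^2\,d\tau$, and~\eqref{fg eps1} bounds these by quantities depending only on $f^0$ and $g^0$.

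First I would read off from~\eqref{feps 1} and~\eqref{geps 1} the bounds
\[
\int_0^T \norm{f^\eps(\tau)}{L^2(U^\eps)}^2\,d\tau = \norm{f^\eps}{L^2(0,T;L^2(U^\eps))}^2 \le M_1^2, \qquad \norm{g^\eps}{L^2(U^\eps)} \le \norm{g^\eps}{H^1(U^\eps)} \le M_2,
\]
so that the right-hand sides of both~\eqref{e:ene est} and~\eqref{e:ene est 2} are controlled by $M_1^2 + M_2^2$ independently of $\eps$.

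For~\eqref{9Sep1a} I would apply~\eqref{e:ene est} of Lemma~\ref{l:unique Dir} at a generic $t\in[0,T]$. Since its right-hand side is non-decreasing in $t$ and bounded above by its value at $t=T$, this gives $\norm{u^\eps(t)}{L^2(U^\eps)}^2 \lesssim M_1^2 + M_2^2$ for every $t$, hence the same bound for the supremum, as well as $\int_0^T \norm{u^\eps(\tau)}{H^1(U^\eps)}^2\,d\tau \lesssim M_1^2 + M_2^2$; adding the two and setting $M_3^2$ equal to a suitable multiple of $M_1^2+M_2^2$ yields~\eqref{9Sep1a}. For~\eqref{9Sep1b} the estimate~\eqref{e:ene est 2} of Lemma~\ref{l:unique Dir 2} already has precisely the left-hand side required, so inserting the same data bounds and defining $M_4^2$ analogously gives the claim directly.

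The one point that requires genuine care---and the main obstacle---is that the constants hidden in $\lesssim$ in~\eqref{e:ene est} and~\eqref{e:ene est 2} are a priori domain-dependent, whereas the conclusion asserts a bound uniform in $\eps$. For the Dirichlet problem these estimates are driven by the coercivity of $a(\cdot,\cdot;U^\eps)$ on $H_0^1(U^\eps)$, which by Lemma~\ref{l:Poincare inequality} is governed by the Poincar\'e constant of $U^\eps$. Extending any $w\in H_0^1(U^\eps)$ by zero to $U_+$ shows that this Poincar\'e constant is dominated by that of $U_+$, uniformly in $\eps$, thanks to the inclusion $U^\eps\Subset U_+$ from~\eqref{D D eps1}; hence the coercivity constants stay bounded below uniformly over $\eps\le\eps_0$. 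Consequently the implicit constants in~\eqref{e:ene est} and~\eqref{e:ene est 2} may be chosen independent of $\eps$, so that $M_3$ and $M_4$ depend only on $f^0$ and $g^0$ through $M_1$ and $M_2$. I would make this uniformity explicit before summing the estimates.
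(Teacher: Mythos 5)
Your proposal is correct and follows exactly the route the paper intends: the paper gives no explicit proof, stating only that the lemma follows from Lemmas~\ref{l:unique Dir} and~\ref{l:unique Dir 2} together with~\eqref{fg eps1}, which is precisely your bookkeeping argument. Your additional observation that the implicit constants are uniform in $\eps$ because the Poincar\'e constant of $U^\eps$ is dominated by that of $U_+$ (via zero extension and~\eqref{D D eps1}) is a point the paper leaves tacit, and making it explicit strengthens rather than departs from the argument.
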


In this paper, we shall frequently use the 
following results.

\begin{lemma}\label{l:9Oct1a}
Let $v^\eps, w^\eps \in L^2(0,T;L^2(U))$ satisfy
\[
\lim_{\eps\goto 0}
\norm{v^\eps}{L^2(0,T;L^2(U))}
=
0
\quad 
\text{and}
\quad 
\norm{w^\eps}{L^2(0,T;L^2(U))}
\le 
M
\
\text{for sufficiently small } \eps.
\]
There holds 
\[
\lim_{\eps\goto 0}
\int_0^t
\int_U 
v^\eps(\tau,\vecx)\, 
w^\eps(\tau,\vecx)\,d\vecx\,d\tau
=
0
\quad 
\forall 
t\in [0,T].
\]
\end{lemma}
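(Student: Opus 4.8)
The plan is to recognize the space--time integral as an $L^2$ inner product of $v^\eps$ and $w^\eps$ over the cylinder $(0,t)\times U$ and to control it by the Cauchy--Schwarz inequality. Since the only hypotheses available are a vanishing $L^2$-norm for $v^\eps$ and a uniform bound for $w^\eps$, a single Hölder-type estimate should suffice, and the work is entirely routine.

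First I would fix $t\in[0,T]$ and apply the Cauchy--Schwarz inequality in $L^2\big((0,t)\times U\big)$ to obtain
\[
\left| \int_0^t \int_U v^\eps(\tau,\vecx)\, w^\eps(\tau,\vecx)\,d\vecx\,d\tau \right|
\le
\norm{v^\eps}{L^2(0,t;L^2(U))}\,\norm{w^\eps}{L^2(0,t;L^2(U))}.
\]
Because $t\le T$ and the time integrand defining the norm is nonnegative, the $L^2(0,t;L^2(U))$-norm of any function is dominated by its $L^2(0,T;L^2(U))$-norm; thus the right-hand side is at most $\norm{v^\eps}{L^2(0,T;L^2(U))}\,\norm{w^\eps}{L^2(0,T;L^2(U))}$. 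The key feature of this bound is that it no longer depends on $t$.

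Then I would invoke the two assumptions directly: for sufficiently small $\eps$ the second factor is at most $M$, while the first factor tends to zero as $\eps\goto 0$. Consequently the whole expression is bounded by $M\,\norm{v^\eps}{L^2(0,T;L^2(U))}$, which converges to $0$. Since this estimate is uniform in $t$, the desired limit holds simultaneously for every $t\in[0,T]$, completing the proof. I do not anticipate any genuine obstacle here; the only point worth emphasizing is the monotonicity of the time-integrated norm, which allows the restriction to $(0,t)$ to be replaced by the full interval $(0,T)$ and thereby yields a conclusion independent of $t$.
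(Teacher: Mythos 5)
Your proposal is correct and follows essentially the same route as the paper: apply the Cauchy--Schwarz inequality on $(0,t)\times U$, dominate the restricted norms by the $L^2(0,T;L^2(U))$-norms, and bound the result by $M\,\norm{v^\eps}{L^2(0,T;L^2(U))}\to 0$. The uniformity in $t$ that you highlight is implicit in the paper's one-line estimate as well.
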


\begin{proof}
We have 
\begin{align}
\abs{ 
\int_0^t
\int_U 
v^\eps(\tau,\vecx)\, 
w^\eps(\tau,\vecx)\,d\vecx\,d\tau
}
&
% \le 
% \int_0^t
% \int_U 
% \abs{ 
% v^\eps(\tau,\vecx)\, w^\eps(\tau,\vecx)}
% \,d\vecx\,d\tau
% \notag
% \\
% &
% \le 
% \int_0^t
% \norm{v^\eps(\tau)}{L^2(U)}
% \norm{w^\eps(\tau)}{L^2(U)}
% \,d\tau
% \notag
% \\
% &
% \le
% \brac{ 
% \int_0^t
% \norm{v^\eps(\tau)}{L^2(U)}^2
% \,d\tau
% }^{1/2}
% \brac{ 
% \int_0^t
% \norm{w^\eps(\tau)}{L^2(U)}^2
% \,d\tau
% }^{1/2}
% \notag
% \\
% &
\le
\norm{v^\eps}{L^2(0,T;L^2(U))}
\norm{w^\eps}{L^2(0,T;L^2(U))}
\notag
\\
&
\le 
M\norm{v^\eps}{L^2(0,T;L^2(U))}.
\notag
\end{align}
Letting $\eps$ go to zero, we obtain the desired 
equality.
\end{proof}

\begin{lemma}\label{l:fv conv}
Let $v^\eps\in L^\infty(U)$ and $w^\eps\in 
L^2(0,T;L^2(U))$ satisfy 
\[
\lim_{\eps\goto 0}
\norm{v^\eps}{L^\infty(U)} = 0
\quad 
\text{and}
\quad 
\norm{w^\eps}{L^2(0,T;L^2(U))}
\le M
\quad 
\text{for sufficiently small } \eps.
\]
There holds 
\[
\lim_{\eps\goto 0}
\int_0^t 
\int_U
v^\eps(\vecx) w^\eps(\tau,\vecx)\,d\vecx\,d\tau
=
0
\quad 
\forall t\in [0,T].
\]
\end{lemma}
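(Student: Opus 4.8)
The plan is to reduce this lemma to the preceding one, Lemma~\ref{l:9Oct1a}, by regarding the time-independent factor $v^\eps$ as an element of $L^2(0,T;L^2(U))$ that happens to be constant in $\tau$. The only new ingredient needed is that $U$ is a bounded domain, so that the embedding $L^\infty(U)\hookrightarrow L^2(U)$ holds with an explicit finite constant; this is the case for all domains considered in the paper.

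First I would define $\tilde v^\eps(\tau,\vecx):= v^\eps(\vecx)$ for all $\tau\in(0,T)$, a function constant in time. Because $U$ is bounded, I would estimate
\[
\norm{\tilde v^\eps}{L^2(0,T;L^2(U))}^2
=
\int_0^T\int_U \abs{v^\eps(\vecx)}^2\,d\vecx\,d\tau
\le
T\,\abs{U}\,\norm{v^\eps}{L^\infty(U)}^2,
\]
where $\abs{U}$ denotes the (finite) Lebesgue measure of $U$. Taking square roots and invoking the hypothesis $\norm{v^\eps}{L^\infty(U)}\goto 0$ as $\eps\goto 0$, I conclude that $\norm{\tilde v^\eps}{L^2(0,T;L^2(U))}\goto 0$.

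With this in hand, the two hypotheses of Lemma~\ref{l:9Oct1a} are satisfied by the pair $(\tilde v^\eps, w^\eps)$: the first factor tends to zero in $L^2(0,T;L^2(U))$, while the second obeys the uniform bound $\norm{w^\eps}{L^2(0,T;L^2(U))}\le M$ by assumption. Since $\tilde v^\eps(\tau,\vecx)=v^\eps(\vecx)$, the double integral here coincides with the one treated in Lemma~\ref{l:9Oct1a}, so applying that result immediately yields
\[
\lim_{\eps\goto 0}
\int_0^t\int_U v^\eps(\vecx)\,w^\eps(\tau,\vecx)\,d\vecx\,d\tau
=
0
\quad\forall t\in[0,T].
\]

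I do not anticipate any genuine obstacle; the argument is essentially a bookkeeping reduction to the previous lemma. The sole point deserving a moment of care is the use of the boundedness of $U$ to pass from the $L^\infty$-control of $v^\eps$ to an $L^2$-bound with the finite constant $(T\abs{U})^{1/2}$. If one prefers a self-contained proof avoiding the reduction, the same estimate follows directly: bounding $\abs{\int_0^t\int_U v^\eps w^\eps}$ by $\norm{v^\eps}{L^\infty(U)}\int_0^t\int_U\abs{w^\eps}$ and then applying the Cauchy--Schwarz inequality in the $(\tau,\vecx)$ variables gives the upper bound $(T\abs{U})^{1/2}\,M\,\norm{v^\eps}{L^\infty(U)}$, which vanishes as $\eps\goto 0$.
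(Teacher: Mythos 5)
Your proof is correct and matches the paper, whose entire proof is the remark that the argument is the same as for Lemma~\ref{l:9Oct1a}; your reduction via the time-constant extension $\tilde v^\eps$ (using that $U$ is bounded) and your direct Cauchy--Schwarz estimate both make that one-line claim precise. No gaps.
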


\begin{proof}
The proof can be done in the same manner as the 
proof in Lemma~\ref{l:9Oct1a}.
\end{proof}

\begin{lemma}\label{l:Oct2e}
Let $v^\eps, v^0\in L^\infty(U)$ and $w^\eps, 
w^0\in L^2(0,T;L^2(U))$ satisfy
\begin{align}
\lim_{\eps\goto 0}
\norm{v^\eps - v^0}{L^\infty(U)} 
=
0
\quad 
\text{and}
\quad 
\lim_{\eps\goto 0}
\norm{w^\eps - w^0}{L^2(0,T;L^2(U))} 
=
0.
\label{9Oct2e}
\end{align}
There holds 
\begin{align}
\lim_{\eps\goto 0}
\norm{v^\eps w^\eps - v^0 w^0}{L^2(0,T;L^2(U))} 
=
0.
\label{9Oct2f}
\end{align}
\end{lemma}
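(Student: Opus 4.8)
The plan is to reduce everything to the continuity of multiplication in the relevant norms. The single inequality driving the argument is the product bound
\[
\norm{vw}{L^2(0,T;L^2(U))} \le \norm{v}{L^\infty(U)}\,\norm{w}{L^2(0,T;L^2(U))},
\]
valid whenever $v\in L^\infty(U)$ depends only on the spatial variable and $w\in L^2(0,T;L^2(U))$; it follows at once by extracting $\esssup_{\vecx}\abs{v(\vecx)}$ from the space--time integral of $\abs{v(\vecx)}^2\abs{w(\tau,\vecx)}^2$.

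With this in hand, I would split the product difference via the usual add-and-subtract step,
\[
v^\eps w^\eps - v^0 w^0 = v^\eps\,(w^\eps - w^0) + (v^\eps - v^0)\,w^0,
\]
apply the triangle inequality, and bound each term by the product inequality above, obtaining
\[
\norm{v^\eps w^\eps - v^0 w^0}{L^2(0,T;L^2(U))}
\le
\norm{v^\eps}{L^\infty(U)}\norm{w^\eps - w^0}{L^2(0,T;L^2(U))}
+
\norm{v^\eps - v^0}{L^\infty(U)}\norm{w^0}{L^2(0,T;L^2(U))}.
\]

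It then remains to let $\eps\goto 0$. In the first summand the factor $\norm{w^\eps - w^0}{L^2(0,T;L^2(U))}$ tends to zero by hypothesis, while $\norm{v^\eps}{L^\infty(U)}$ stays bounded (for instance by $\norm{v^0}{L^\infty(U)}+1$ once $\eps$ is small enough, using the assumed $L^\infty$-convergence of $v^\eps$), so this product vanishes. In the second summand $\norm{w^0}{L^2(0,T;L^2(U))}$ is a fixed finite constant while $\norm{v^\eps - v^0}{L^\infty(U)}\goto 0$, so that term vanishes too, and the claim follows. I do not expect any genuine obstacle here: this is a routine bilinear continuity estimate, and the only point worth a word is the uniform boundedness of $\norm{v^\eps}{L^\infty(U)}$, which is an immediate consequence of the $L^\infty$-convergence of $v^\eps$ to $v^0$.
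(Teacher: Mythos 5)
Your proof is correct and follows essentially the same route as the paper: an add--and--subtract decomposition, the triangle inequality, the product bound $\norm{vw}{L^2(0,T;L^2(U))}\le\norm{v}{L^\infty(U)}\norm{w}{L^2(0,T;L^2(U))}$, and the observation that one of the two families of norms in each term stays bounded. The only (immaterial) difference is that you split as $v^\eps(w^\eps-w^0)+(v^\eps-v^0)w^0$, so you need boundedness of $\norm{v^\eps}{L^\infty(U)}$, whereas the paper splits as $(v^\eps-v^0)w^\eps+v^0(w^\eps-w^0)$ and uses boundedness of $\norm{w^\eps}{L^2(0,T;L^2(U))}$; both facts follow at once from the assumed convergences.
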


\begin{proof}
Applying the triangle inequality, we have 
\begin{align}
\norm{v^\eps w^\eps - v^0 w^0}{L^2(0,T;L^2(U))}^2
&
\lesssim 
\norm{\brac{v^\eps - v^0} 
w^\eps}{L^2(0,T;L^2(U))}^2 
+
\norm{v^0\brac{w^\eps - w^0} 
}{L^2(0,T;L^2(U))}^2 
\notag
\\
&
\le 
\norm{v^\eps-v^0}{L^\infty(U)}^2 
\norm{w^\eps}{L^2(0,T;L^2(U))}^2 
+
\norm{v^0}{L^\infty}^2 
\norm{w^\eps-w^0}{L^2(0,T;L^2(U))}^2.
\notag
\end{align}
Letting $\eps$ go to zero and 
noting~\eqref{9Oct2e}, we obtain~\eqref{9Oct2f}.
\end{proof}

The following lemma states the convergence of 
$u^\eps$ and its derivative 
$u_t^\eps$ to $u^0$ and $u_t^0$, respectively.

\begin{lemma}\label{l:u u0 1}
Assume that the assumptions in 
Lemma~\ref{l:unique 
Dir 2} 
are satisfied.
Let $u^\eps$ and $u^0$ be solutions 
to~\eqref{e:sh1} 
and~\eqref{e:sh2}, respectively.
There holds 
\begin{subequations}\label{e:u u0 1}
\begin{align}
\lim_{\eps\goto 0}
\sup_{t\in [0,T]}
\norm{(u^\eps\dcirc \cT^\eps-u^0)(t)}{L^2(U^0)}^2
&
=
0,
\label{lim 1}
\\
\lim_{\eps\goto 0}
\norm{u^\eps\dcirc 
\cT^\eps-u^0}{L^2(0,T;H^1(U^0))}
&
=
0,
\label{lim 2}
\\
\lim_{\eps\goto 0}
\norm{ 
u_t^\eps\dcirc \cT^\eps- u_t^0
}{L^2(0,T;H^{-1}(U^0))}^2d\tau
&
=
0.
\label{lim 3}
\end{align}
\end{subequations}

\end{lemma}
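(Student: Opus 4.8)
The plan is to transport the perturbed problem onto the fixed reference domain $U^0$ via the change of variables $\vecy=\cT^\eps(\vecx)$ and then run a standard parabolic energy estimate on the difference. First I would set $w^\eps:=u^\eps\dcirc\cT^\eps$ and note that $v\mapsto v\circ\cT^\eps$ is a bijection of $H_0^1(U^\eps)$ onto $H_0^1(U^0)$, so that $w^\eps\in H_0^1(U^0)$. Testing~\eqref{e:weak for} against $v=\phi\circ(\cT^\eps)^{-1}$ with $\phi\in H_0^1(U^0)$, using $\nabla_\vecy u^\eps=J_{\cT^\eps}^{-\top}\nabla_\vecx w^\eps$ and the definition $A(\eps,\cdot)=\ga(\eps,\cdot)J_{\cT^\eps}^{-1}J_{\cT^\eps}^{-\top}$ of Lemma~\ref{l:T eps prop}, turns the weak formulation into the weighted problem on $U^0$,
\begin{equation*}
\int_{U^0} w_t^\eps(t)\,\phi\,\ga(\eps,\cdot)\,d\vecx
+\int_{U^0} A(\eps,\cdot)\nabla w^\eps(t)\cdot\nabla\phi\,d\vecx
=\int_{U^0}(f^\eps\dcirc\cT^\eps)(t)\,\phi\,\ga(\eps,\cdot)\,d\vecx,
\end{equation*}
valid for all $\phi\in H_0^1(U^0)$, where I used that $\ga(\eps,\cdot)$ is independent of $t$.

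Next I would subtract the reference weak formulation of~\eqref{e:sh2}. Writing $e^\eps:=w^\eps-u^0\in H_0^1(U^0)$, this gives
\begin{equation*}
\int_{U^0} e_t^\eps\,\ga(\eps,\cdot)\,\phi\,d\vecx
+\int_{U^0} A(\eps,\cdot)\nabla e^\eps\cdot\nabla\phi\,d\vecx
=\langle R^\eps(t),\phi\rangle,
\end{equation*}
where the residual collects the three perturbation terms
\begin{equation*}
\langle R^\eps,\phi\rangle
=\int_{U^0}\big[(f^\eps\dcirc\cT^\eps)\ga(\eps,\cdot)-f^0\big]\phi\,d\vecx
-\int_{U^0} u_t^0(\ga(\eps,\cdot)-1)\phi\,d\vecx
-\int_{U^0}(A(\eps,\cdot)-I)\nabla u^0\cdot\nabla\phi\,d\vecx.
\end{equation*}
Each piece is designed to vanish as $\eps\goto0$: the first because $(f^\eps\dcirc\cT^\eps)\ga(\eps,\cdot)\goto f^0$ in $L^2(0,T;L^2(U^0))$, which follows from~\eqref{feps f} together with $\norm{\ga(\eps,\cdot)-1}{L^\infty(U_+)}\goto0$ (Lemma~\ref{l:9M1a}); the second because of the same bound on $\ga(\eps,\cdot)-1$ while $u_t^0\in L^2(0,T;L^2(U^0))$; and the third because $\norm{A(\eps,\cdot)-I}{L^\infty(U_+)}\goto0$ (Lemma~\ref{l:T eps prop}) while $\nabla u^0\in L^2(0,T;L^2(U^0))$.

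Then I would test the error equation with $\phi=e^\eps(t)$. Since $\ga(\eps,\cdot)$ is time-independent, the parabolic term becomes $\tfrac12\tfrac{d}{dt}\int_{U^0}\ga(\eps,\cdot)|e^\eps|^2\,d\vecx$, which is comparable to $\tfrac12\tfrac{d}{dt}\norm{e^\eps(t)}{L^2(U^0)}^2$ because $\ga(\eps,\cdot)\geq c>0$; and because $A(\eps,\cdot)$ is symmetric positive definite and converges uniformly to $I$, for small $\eps$ the elliptic term is coercive and, via Lemma~\ref{l:Poincare inequality}, controls $\norm{e^\eps}{H^1(U^0)}^2$. Absorbing the gradient contribution of the last residual term by Young's inequality and bounding the other residual contributions with Lemmas~\ref{l:9Oct1a},~\ref{l:fv conv} and~\ref{l:Oct2e}, I would integrate in $t$ and apply Gronwall's inequality. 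The initial value is $e^\eps(0)=g^\eps\circ\cT^\eps-g^0$, which tends to $0$ in $L^2(U^0)$ by~\eqref{geps g}; hence both $\sup_t\norm{e^\eps(t)}{L^2(U^0)}^2$ and $\norm{e^\eps}{L^2(0,T;H^1(U^0))}^2$ tend to zero, which is~\eqref{lim 1} and~\eqref{lim 2}.

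Finally, for~\eqref{lim 3} I note that $e_t^\eps=u_t^\eps\dcirc\cT^\eps-u_t^0$ lies in $L^2(0,T;L^2(U^0))$, with a uniform bound obtained by transporting~\eqref{9Sep1b} of Lemma~\ref{l:9Sep1d} to $U^0$ through the norm equivalence~\eqref{9M1f2}. For $\phi\in H_0^1(U^0)$ I would split $\int_{U^0}e_t^\eps\phi=\int_{U^0}e_t^\eps\ga(\eps,\cdot)\phi-\int_{U^0}e_t^\eps(\ga(\eps,\cdot)-1)\phi$; the first integral is controlled through the error equation by $\norm{\nabla e^\eps}{L^2(U^0)}\norm{\phi}{H^1(U^0)}$ plus the residual terms, and the second by $\norm{\ga(\eps,\cdot)-1}{L^\infty(U_+)}\norm{e_t^\eps}{L^2(U^0)}\norm{\phi}{H^1(U^0)}$. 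Taking the supremum over $\norm{\phi}{H^1(U^0)}\le1$, squaring and integrating in $t$, the right-hand side tends to zero thanks to~\eqref{lim 2}, the residual estimates above, and $\norm{\ga(\eps,\cdot)-1}{L^\infty(U_+)}\goto0$ combined with the uniform $L^2(0,T;L^2(U^0))$ bound on $e_t^\eps$. I expect the main obstacle to be the energy-estimate step: making the variable-coefficient coercivity and the Gronwall argument close uniformly in $\eps$ while genuinely showing every residual term vanishes, and in~\eqref{lim 3} handling the weight $\ga(\eps,\cdot)$ when passing from the natural weighted pairing to the true $H^{-1}(U^0)$ norm, which is precisely why the $L^2$-regularity of the time derivative is needed.
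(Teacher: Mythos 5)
Your proposal is correct and follows the same overall route as the paper: transport the perturbed weak formulation to $U^0$ via $\cT^\eps$, subtract the reference formulation, test the error equation with the error itself, and then recover the $H^{-1}$ convergence of the time derivatives by duality. The one genuine difference is the algebraic splitting of the error equation. The paper keeps the \emph{unweighted} forms $\int_{U^0}(u_t^\eps\dcirc\cT^\eps-u_t^0)\,w$ and $\int_{U^0}\nabla w\cdot\nabla(u^\eps\dcirc\cT^\eps-u^0)$ on the left of~\eqref{e:u eps u D0}, so its right-hand side carries the factors $u_t^\eps\dcirc\cT^\eps(\ga(\eps,\cdot)-1)$ and $(A(\eps,\cdot)-I)\nabla(u^\eps\dcirc\cT^\eps)$, which involve the $\eps$-dependent solution and therefore require the uniform a priori bounds~\eqref{9Sep1c} of Lemma~\ref{l:9Sep1d} (via Lemmas~\ref{l:9Oct1a} and~\ref{l:fv conv}) to be driven to zero. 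You instead keep the weighted forms $\int_{U^0}\ga(\eps,\cdot)\,e_t^\eps\phi$ and $\int_{U^0}A(\eps,\cdot)\nabla e^\eps\cdot\nabla\phi$ on the left, paying for this with a uniform-coercivity argument ($\ga\ge c>0$ from~\eqref{e:vbn 4} and $A\to I$ from~\eqref{e:st 1}), and in exchange your residual involves only the fixed data $f^0$, $u_t^0$, $\nabla u^0$, so its decay needs nothing beyond $\norm{\ga(\eps,\cdot)-1}{L^\infty}\goto 0$, $\norm{A(\eps,\cdot)-I}{L^\infty}\goto 0$ and the regularity of $u^0$; no Gronwall is actually needed once the residual is absorbed by Young's inequality. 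The uniform bound on $u_t^\eps\dcirc\cT^\eps$ in $L^2(0,T;L^2(U^0))$ still enters your argument, but only in the final step for~\eqref{lim 3}, where you correctly identify that the weight $\ga(\eps,\cdot)$ must be peeled off before passing to the genuine $H^{-1}(U^0)$ norm --- a point the paper sidesteps by its choice of decomposition. Both arguments are sound; yours localises the use of the a priori estimates more sharply, while the paper's keeps the left-hand side $\eps$-independent and hence avoids the coercivity discussion.
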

 
\begin{proof}
The first equation in~\eqref{e:weak for} gives 
\begin{align}
\int_{U^\eps}
u_t^\eps(t,\vecx)\, v^\eps(\vecx)\,d{\vecx} 
+
\int_{U^\eps}
\nabla u^\eps(t,\vecx) \cdot \nabla 
v^\eps(\vecx)\,d{\vecx}
=
\int_{U^\eps}
f^\eps(t,\vecx)\, v^\eps(\vecx)\,d{\vecx}
\quad 
v^\eps\in 
C_0^\infty(U^\eps) 
\label{D eps 1}
\end{align}
for $0\le t\le T$.
In particular,
\begin{align}
\int_{U^0}
u_t^0(t,\vecx)\, w(\vecx)\,d{\vecx} 
+
\int_{U^0}
\nabla u^0(t,\vecx) \cdot \nabla 
w(\vecx)\,d{\vecx}
=
\int_{U^0}
f^0(t,\vecx)\, w(\vecx)\,d{\vecx}
\quad 
\forall w\in C_0^\infty(U^0). 
\label{D 2}
\end{align}
Using the change of variables $\vecx = 
\cT^\eps(\vecy)$
in~\eqref{D eps 1}, we have 
\begin{align}
\int_{U^0} 
u_t^\eps\dot{\circ}\cT^\eps(t,\vecy)\,
v^\eps{\circ} \cT^\eps(\vecy)\,
\gamma(\eps,\vecy)
\,d\vecy
&
+
\int_{U^0}
[\nabla (v^\eps{\circ} \cT^\eps)(\vecy)]^T\,
A(\eps,\vecy)\,
\nabla(u^\eps\dot{\circ}\cT^\eps)(t,\vecy)\,d\vecy
\notag
\\
&
=
\int_{U^0}
f^\eps\dot{\circ}\cT^\eps(t,\vecy)\,
v^\eps{\circ} \cT^\eps(\vecy)\,
\gamma(\eps,\vecy)\,
d\vecy.
\notag
\end{align}
Since $C_0^\infty(U^\eps)$ is dense in 
$H_0^1(U^\eps)$,
this is true for all $v^\eps\in H_0^1(U^\eps)$. 
Thus, 
for all $w\in 
H_0^1(U^0)$ there holds
\begin{align}
\int_{U^0} 
u_t^\eps\dot{\circ}\cT^\eps(t,\vecy)\,
w(\vecy)\,
\gamma(\eps,\vecy)
\,d\vecy
&
+
\int_{U^0}
[\nabla w(\vecy)]^T\,
A(\eps,\vecy)\,
\nabla(u^\eps\dot{\circ}\cT^\eps)(t,\vecy)\,d\vecy
\notag
\\
&
=
\int_{U^0}
f^\eps\dot{\circ}\cT^\eps(t,\vecy)\,
w(\vecy)\,
\gamma(\eps,\vecy)\,
d\vecy.
\label{D 1}
\end{align}
Subtracting~\eqref{D 2} from~\eqref{D 1} we 
deduce 
\begin{align}
\int_{U^0}
\big(
u_t^\eps\dot{\circ}\cT^\eps
-
u_t^0
\big)(t,\vecy)\,
 w(\vecy)\,d\vecy
\
+
&
\int_{U^0}
[\nabla w(\vecy)]^T \,
\nabla\brac{u^\eps\dot{\circ}\cT^\eps - 
u^0}(t,\vecy)
\,d\vecy
\notag
\\
=
&
- 
\int_{U^0}
u_t^\eps\dot{\circ}\cT^\eps(t,\vecy) \,
w(\vecy) 
\brac{ 
\gamma(\eps,\vecy) - 1
}\,
d\vecy
\notag
\\
&
-
\int_{U^0}
[\nabla w(\vecy)]^T \,
\left(
A(\eps,\vecy) - I
\right)
\,
\nabla\brac{u^\eps\dot{\circ}\cT^\eps}(t,\vecy)\,
d\vecy
\notag 
\\
&
+
\int_{U^0}
\brap{
f^\eps\dot{\circ}\cT^\eps(t,\vecy)
\, \gamma(\eps,\vecy) 
-
f^0(t,\vecy)
}\,
w(\vecy)\,d\vecy.
\label{e:u eps u D0}
\end{align}
For any $t\in [0,T]$, we choose in~\eqref{e:u eps 
u D0} 
$w(\vecy) = 
\brac{u^\eps\dot{\circ}\cT^\eps- u^0}(t,\vecy)$ 
to 
obtain

\begin{align}
\frac12 
\frac{d}{dt}
\int_{U^0} 
\left[ 
\big(u^\eps\dot{\circ}\cT^\eps
\right.
&
\left. 
-
u^0\big)(t,\vecy)
\right]^2 
d\vecy
+
\int_{U^0}
\norm{\nabla\brac{u^\eps\dot{\circ}\cT^\eps - 
u^0}(t,\vecy)}{}^2\,d\vecy
\notag
\\
=
&
-\int_{U^0} 
u_t^\eps\dot{\circ}\cT^\eps(t,\vecy)\,
\brac{u^\eps\dot{\circ}\cT^\eps-u^0}(t,\vecy)\,
\brac{\gamma(\eps,\vecy)-1}\,d\vecy
\notag
\\
&
-\int_{U^0} 
\Big[
\nabla 
\brac{
u^\eps\dot{\circ}\cT^\eps - u^0
}(t,\vecy)
\Big]^T
\big( A(\eps,\vecy)-I\big) 
\,\nabla\brac{u^\eps\dot{\circ}\cT^\eps}(t,\vecy)
\,d\vecy
\notag
\\
&
+
\int_{U^0} 
\left[ 
f^\eps\dot{\circ}\cT^\eps(t,\vecy) 
\gamma(\eps,\vecy) 
- 
f^0(t,\vecy)
\right]
\brac{
u^\eps\dot{\circ}\cT^\eps - u^0
}
(t,\vecy)\,d\vecy.
\label{e:A1}
\end{align}
Integrating both sides of~\eqref{e:A1} over 
$[0,t]$ for any 
$t\in [0,T]$ 
and noting the initial conditions 
$u^\eps\dot{\circ}\cT^\eps(0,\vecy) = 
g^\eps{\circ}\cT^\eps(\vecy)$ and 
$u^0(0,\vecy) = g^0(\vecy)$ for 
all $\vecy\in U^0$,
we 
obtain
\begin{comment}
\begin{align}
\frac12 
\int_{U^0} 
\Big(
\big[ 
\big(u^\eps\dot{\circ}\cT^\eps
&
-
u^0\big)(t,\vecy)
\big]^2 
-
\big[
\big(
u^\eps\dot{\circ}\cT^\eps
-
u^0\big)(0,\vecy)
\big]^2 
\Big)
d\vecy
\notag
\\
&
+
\int_0^t
\int_{U^0}
\norm{\nabla\brac{u^\eps\dot{\circ}\cT^\eps - 
u^0}(\tau,\vecy)}{}^2\,d\vecy\,d\tau
\notag
\\
=
&
-
\int_0^t
\int_{U^0} 
u_t^\eps\dot{\circ}\cT^\eps(\tau,\vecy)\,
\brac{u^\eps\dot{\circ}\cT^\eps-u^0}(\tau,\vecy)\,
\brac{\gamma(\eps,\vecy)-1}\,d\vecy\,d\tau
\notag
\\
&
-
\int_0^t
\int_{U^0} 
\Big[
\nabla 
\brac{
u^\eps\dot{\circ}\cT^\eps - u^0
}(\tau,\vecy)
\Big]^T
\brac{A(\eps,\vecy)-I}
\nabla\brac{u^\eps\dot{\circ}\cT^\eps}(\tau,
\vecy)\,
d\vecy\,d\tau
\notag
\\
&
+
\int_0^t
\int_{U^0} 
\left[ 
f^\eps\dot{\circ}\cT^\eps(\tau,\vecy) 
\gamma(\eps,\vecy) 
- 
f^0(\tau,\vecy)
\right]
\brac{ 
u^\eps\dot{\circ}\cT^\eps - u^0
}
(\tau,\vecy)\,d\vecy\,d\tau.
\label{e:A2}
\end{align}
The initial conditions of $u^\eps$ and $u^0$ give 
$u^\eps\dot{\circ}\cT^\eps(0,\vecy) = 
g^\eps{\circ}\cT^\eps(\vecy)$ and 
$u^0(0,\vecy) = g^0(\vecy)$ for 
all $\vecy\in U^0$. This together 
with~\eqref{e:A2} implies 
\end{comment}
\begin{align}
\frac12 
\int_{U^0} 
[ 
(u^\eps\dot{\circ}\cT^\eps
&
-
u^0)(t,\vecy)
]^2 
\,d\vecy
+
\int_0^t
\int_{U^0}
\norm{\nabla\brac{u^\eps\dot{\circ}\cT^\eps - 
u^0}(\tau,\vecy)}{}^2\,d\vecy\,d\tau
\notag
\\
=
&\
\frac12 
\int_{U^0}
[ 
g^\eps{\circ}\cT^\eps(\vecy)
-
g^0(\vecy)
]^2 
\,d\vecy
\notag
\\
&
-
\int_0^t
\int_{U^0} 
u_t^\eps\dot{\circ}\cT^\eps(\tau,\vecy)\,
 \brac{u^\eps\dot{\circ}\cT^\eps - 
u^0}{(}\tau,\vecy) \,
\brac{\gamma(\eps,\vecy)-1}\,d\vecy\,d\tau
\notag
\\
&
-
\int_0^t
\int_{U^0} 
\Big[
\nabla 
\brac{
u^\eps\dot{\circ}\cT^\eps - u^0
}(\tau,\vecy)
\Big]^T
\big( A(\eps,\vecy)-I\big) 
\nabla\brac{u^\eps\dot{\circ}\cT^\eps}(\tau,
\vecy)\,d\vecy\,
d\tau
\notag
\\
&
+
\int_0^t
\int_{U^0} 
\left[ 
f^\eps\dot{\circ}\cT^\eps(\tau,\vecy)
\gamma(\eps,\vecy) 
- 
f^0(\tau,\vecy)
\right]
\brac{
u^\eps\dot{\circ}\cT^\eps - u^0
}
(\tau,\vecy)\,d\vecy\,d\tau.
\label{e:A3}
\end{align}
Applying  Lemma~\ref{l:Poincare inequality} and 
the triangle inquality, 
we derive 
\begin{align}
\left\| (u^\eps\dot{\circ}\cT^\eps 
\right. 
&
\left.
-\ u^0)
(t)
\right\|_{L^2(U^0)}^2
+
\int_0^t 
\norm{(u^\eps\dot{\circ}\cT^\eps - 
u^0)(\tau)}{H^1(U^0)}^2\,d\tau
\lesssim
\norm{g^\eps{\circ} \cT^\eps - g^0}{L^2(U^0)}^2
\notag
\\
&
+
\abs{
\int_0^t
\int_{U^0} 
u_t^\eps\dot{\circ}\cT^\eps(\tau,\vecy)\,
 \brac{u^\eps\dot{\circ}\cT^\eps - 
u^0}{(}\tau,\vecy) \,
\brac{\gamma(\eps,\vecy)-1}\,d\vecy\,d\tau
}
\notag
\\
&
+
\abs{
\int_0^t
\int_{U^0} 
\Big[
\nabla 
\brac{
u^\eps\dot{\circ}\cT^\eps - u^0
}(\tau,\vecy)
\Big]^T
\big( A(\eps,\vecy)-I\big) 
\nabla\brac{u^\eps\dot{\circ}\cT^\eps}(\tau,
\vecy)\,d\vecy\,
d\tau
}
\notag
\\
&
+
\abs{
\int_0^t
\int_{U^0} 
\left[ 
f^\eps\dot{\circ}\cT^\eps(\tau,\vecy)
\gamma(\eps,\vecy) 
- 
f^0(\tau,\vecy)
\right]
\brac{
u^\eps\dot{\circ}\cT^\eps - u^0
}
(\tau,\vecy)\,d\vecy\,d\tau
}.
\label{e:A18}
\end{align}
Noting~\eqref{feps f}, the result in 
Lemma~\ref{l:9M1a} and applying triangle 
inequality we deduce 
\begin{align}
\lim_{\eps\goto 0}
\norm{f^\eps\dcirc \cT^\eps \gamma(\eps,\cdot) - 
f^0}{L^2(0,T;L^2(U^0))}
=
0.
\label{9Oct1b}
\end{align}
Letting $\eps$ go to zero, noting~\eqref{geps g}, 
\eqref{9M1h}, \eqref{e:st 1}, \eqref{9Oct1b}, 
\eqref{9Sep1c} and applying Lemmas~\ref{l:9Oct1a} 
and~\ref{l:fv conv}, we obtain
\begin{align}
\lim_{\eps\goto 0}
\sup_{t\in [0,T]}
\norm{(u^\eps\dcirc \cT^\eps-u^0)(t)}{L^2(U^0)}
&
=
0
\notag
\\
\lim_{\eps\goto 0}
\norm{u^\eps\dcirc 
\cT^\eps-u^0}{L^2(0,T;H^1(U^0))}
&
=
0.
\label{esi8}
\end{align}
We shall next prove that 
$\displaystyle\lim_{\eps\goto 0} 
\norm{u_t^\eps{\circ}\cT^\eps - u_t^0}{L^2(0,T; 
H^{-1}(U^0))}=0$.
From~\eqref{e:u eps u D0}, for any $\tau\in 
[0,T]$ we have 
\begin{align}
&
\int_{U^0}
\left[
u_t^\eps\dot{\circ}\cT^\eps(\tau,\vecy) 
\right.
-
\left.
u_t^0(\tau,\vecy)
\right]
w(\vecy)\,d\vecy
\notag
\\
\le
&
\abs{\int_{U^0}
[\nabla w(\vecy)]^T \,
\nabla\brac{u^\eps\dot{\circ}\cT^\eps - 
u^0}(t,\vecy)
\,d\vecy}
+
\abs{ 
\int_{U^0}
u_t^\eps\dot{\circ}\cT^\eps(t,\vecy) \,
w(\vecy) 
\brac{ 
\gamma(\eps,\vecy) - 1
}\,
d\vecy
}
\notag
\\
+
&
\abs{ 
\int_{U^0}
[\nabla w(\vecy)]^T \,
\left(
A(\eps,\vecy) - I
\right)
\,
\nabla\brac{u^\eps\dot{\circ}\cT^\eps}(t,\vecy)\,
d\vecy
}
+
\abs{
\int_{U^0}
\brap{
f^\eps\dot{\circ}\cT^\eps(t,\vecy)
\, \gamma(\eps,\vecy) 
-
f^0(t,\vecy)
}\,
w(\vecy)\,d\vecy
}.
\notag
\end{align}
This gives
\begin{align}
&
\int_{U^0}
\left[
u_t^\eps\dot{\circ}\cT^\eps(\tau,\vecy) 
\right.
-
\left.
u_t^0(\tau,\vecy)
\right]
w(\vecy)\,d\vecy
\notag
\\
\le\
&
\norm{w}{H^1(U^0)}
\Big{(}
\norm{(u^\eps\dot{\circ}\cT^\eps - 
u^0)(\tau)}{H^1(U^0)}
+
\norm{\gamma(\eps,\cdot) - 1}{L^\infty(U^0)}
\norm{u_t^\eps{\circ}\cT^\eps(\tau)}{L^2(U^0)}
\notag
\\
&
+
\norm{A(\eps,\cdot) - I}{L^\infty(U^0)}
\norm{u^\eps\dot{\circ}\cT^\eps(\tau)}{H^1(U^0)}
+
\norm{f^\eps{\dcirc}\cT^\eps(\tau) 
\gamma(\eps,\cdot) 
-
f^0(\tau)
}{L^2(U^0)}
\Big{)}.
\label{e:A16}
\end{align}
This is true for all $w\in H_0^1(U^0)$. Thus, for 
any $\tau\in 
[0,T]$, there holds
\begin{align}
\norm{ 
(u_t^\eps\dcirc \cT^\eps- u_t^0)(\tau)
}{H^{-1}(U^0)}
&
\lesssim 
\norm{(u^\eps\dot{\circ}\cT^\eps - 
u^0)(\tau)}{H^1(U^0)}
\notag
\\
&
+
\norm{\gamma(\eps,\cdot) - 1}{L^\infty(U^0)}
\norm{u_t^\eps\dcirc\cT^\eps(\tau)}{L^2(U^0)}
\notag 
\\
&
+
\norm{A(\eps,\cdot) - I}{L^\infty(U^0)}
\norm{u^\eps\dcirc \cT^\eps(\tau)}{H^1(U^0)}
\notag 
\\
&
+
\norm{f^\eps{\dcirc}\cT^\eps(\tau) 
\gamma(\eps,\cdot) 
-
f^0(\tau)
}{L^2(U^0)}.
\label{esi5}
\end{align}
Squaring up both sides, using the Cauchy--Schwarz
inequality and then integrating over 
$[0,T]$, we 
obtain 
\begin{align}
\norm{ 
u_t^\eps\dcirc \cT^\eps- u_t^0
}{L^2(0,T;H^{-1}(U^0))}^2
&
\lesssim 
\norm{u^\eps\dot{\circ}\cT^\eps - 
u^0}{L^2(0,T;H^1(U^0))}^2
\notag 
\\
&
+
\norm{\gamma(\eps,\cdot) - 1}{L^\infty(U^0)}^2
\norm{u_t^\eps\dcirc 
\cT^\eps}{L^2(0,T;L^2(U^0))}^2
\notag 
\\
&
+
\norm{A(\eps,\cdot) - I}{L^\infty(U^0)}^2
\norm{u^\eps\dcirc\cT^\eps}{L^2(0,T;H^1(U^0))}^2
\notag
\\
&
+
\norm{f^\eps{\dcirc}\cT^\eps
\gamma(\eps,\cdot) 
-
f^0
}{L^2(0,T;L^2(U^0))}^2.
\notag
\end{align}
It follows from~\eqref{esi8}, \eqref{9M1h}, 
\eqref{e:st 1}, \eqref{9Sep1c} and~\eqref{9Oct1b} 
that 
\[
\lim_{\eps\goto 0}
\norm{ 
(u_t^\eps\dcirc \cT^\eps- u_t^0)(\tau)
}{L^2(0,T;H^{-1}(U^0))}
=
0,
\]
finishing the proof of the theorem.
\end{proof}

\begin{lemma}
Let $f^\eps\in L^2(Q_T^\eps)$ satisfy 
\begin{align}
\lim_{\eps\goto 0}
\norm{\frac{f^\eps\dcirc \cT^\eps - 
f^0}{\eps}-\nabla f^0\cdot V}{L^2(0,T;L^2(U^0))}
=
0.
\label{9Oct2a}
\end{align}
Then, there holds 
\begin{align}
\lim_{\eps\goto 0}
\norm{
\frac{
f^\eps{\dcirc} \cT^\eps
\, \gamma(\eps,\cdot) 
-
f^0
}{\eps}
-
\divv\left(V f^0\right)
}{L^2(0,T;L^2(U^0))}
&
=
0.
\label{9Oct2b}
\end{align}
\end{lemma}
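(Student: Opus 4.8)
The plan is to reduce the claim to the two limits already furnished by Lemma~\ref{l:9M1a} together with the product-convergence Lemma~\ref{l:Oct2e}, exploiting the elementary identity $\divv\brac{V f^0} = f^0\,\divv V + V\cdot\nabla f^0$. The natural starting point is the purely algebraic decomposition
\begin{align}
\frac{f^\eps\dcirc\cT^\eps\,\gamma(\eps,\cdot) - f^0}{\eps}
=
\gamma(\eps,\cdot)\,\frac{f^\eps\dcirc\cT^\eps - f^0}{\eps}
+
f^0\,\frac{\gamma(\eps,\cdot) - 1}{\eps},
\notag
\end{align}
which comes from writing $f^\eps\dcirc\cT^\eps\,\gamma - f^0 = \gamma\brac{f^\eps\dcirc\cT^\eps - f^0} + f^0\brac{\gamma - 1}$. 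Subtracting $\divv\brac{Vf^0} = V\cdot\nabla f^0 + f^0\,\divv V$ and grouping the terms pairwise, it then suffices to prove that the two summands on the right converge, in $L^2(0,T;L^2(U^0))$, to $V\cdot\nabla f^0$ and $f^0\,\divv V$, respectively.

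For the first summand I would apply Lemma~\ref{l:Oct2e} with the spatial factors $v^\eps = \gamma(\eps,\cdot)$ and $v^0 = 1$, and the time-dependent factors $w^\eps = \brac{f^\eps\dcirc\cT^\eps - f^0}/\eps$ and $w^0 = V\cdot\nabla f^0$. The hypothesis $\norm{v^\eps - v^0}{L^\infty(U^0)}\goto 0$ is exactly~\eqref{9M1h}, while $\norm{w^\eps - w^0}{L^2(0,T;L^2(U^0))}\goto 0$ is precisely the standing assumption~\eqref{9Oct2a}; moreover $w^0 = V\cdot\nabla f^0$ belongs to $L^2(0,T;L^2(U^0))$, being the limit of $w^\eps$ in that space. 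Lemma~\ref{l:Oct2e} then yields $\gamma(\eps,\cdot)\brac{f^\eps\dcirc\cT^\eps - f^0}/\eps \goto V\cdot\nabla f^0$ in $L^2(0,T;L^2(U^0))$.

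For the second summand I would invoke Lemma~\ref{l:Oct2e} once more, this time with $v^\eps = \brac{\gamma(\eps,\cdot) - 1}/\eps$, $v^0 = \divv V$, and $w^\eps = w^0 = f^0$. Here $\norm{v^\eps - v^0}{L^\infty(U^0)}\goto 0$ is the second conclusion~\eqref{9Oct2c} of Lemma~\ref{l:9M1a}, with $\divv V\in L^\infty(U^0)$ because $V\in W^{1,\infty}(U_+)$, and the factor $w^\eps = w^0 = f^0\in L^2(0,T;L^2(U^0))$ is trivially convergent. This gives $f^0\,\brac{\gamma(\eps,\cdot)-1}/\eps \goto f^0\,\divv V$ in $L^2(0,T;L^2(U^0))$. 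Adding the two limits and using $V\cdot\nabla f^0 + f^0\,\divv V = \divv\brac{Vf^0}$ delivers~\eqref{9Oct2b}. The argument is essentially routine once the decomposition is fixed; the only point requiring genuine care is matching each factor to the correct slot in Lemma~\ref{l:Oct2e}, the key observation being that all of the $\eps$-dependence of $f^\eps$ is absorbed into the quotient $\brac{f^\eps\dcirc\cT^\eps - f^0}/\eps$, which is already controlled by~\eqref{9Oct2a}, so that no estimate beyond Lemmas~\ref{l:9M1a} and~\ref{l:Oct2e} is needed.
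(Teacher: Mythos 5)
Your proof is correct and follows essentially the same route as the paper: both split the quotient algebraically into two product terms and conclude via the limits \eqref{9M1h}, \eqref{9Oct2c} of Lemma~\ref{l:9M1a}, the hypothesis~\eqref{9Oct2a}, and the product-convergence lemma. The only (immaterial) difference is the grouping --- the paper writes the error as $\frac{\gamma(\eps,\cdot)-1}{\eps}(f^\eps\dcirc\cT^\eps)-f^0\divv V$ plus a term controlled directly by~\eqref{9Oct2a} (which forces it to first record that $f^\eps\dcirc\cT^\eps\goto f^0$ in $L^2(0,T;L^2(U^0))$), whereas you pair $\gamma(\eps,\cdot)$ with the difference quotient and $\frac{\gamma(\eps,\cdot)-1}{\eps}$ with $f^0$; both groupings use exactly the same ingredients.
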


\begin{proof}
We first note that if~\eqref{9Oct2a} is 
satisfied, there holds
\begin{align}
\lim_{\eps\goto 0}
\norm{f^\eps\dcirc \cT^\eps - 
f^0}{L^2(0,T;L^2(U^0))}
=
0.
\label{9Oct2d}
\end{align}
The triangle inequality gives
\begin{align}
\Big{\|}
\frac{
f^\eps{\dcirc} \cT^\eps
\, \gamma(\eps,\cdot)
-
f^0
}{\eps}
-
\divv\left(V f^0\right)
\Big{\|}_{L^2(0,T; L^2(U^0))}
&
\lesssim
\norm{ 
\frac{\gamma(\eps,\cdot)-1}{\eps}
(f^\eps{\dcirc}\cT^\eps) 
-
f^0\divv V
}{L^2(0,T; L^2(U^0))}
\notag
\\
&
+
\norm{ 
\frac{ 
f^\eps{\dcirc} \cT^\eps - f^0
}{\eps}
-
V\cdot \nabla f^0
}{L^2(0,T; L^2(U^0))}.
\notag
\end{align}
Applying Lemma~\ref{l:fv conv}, and 
noting~\eqref{9Oct2d} and~\eqref{9Oct2c},
we prove that
the first norm on the right hand side of the 
above inequality converges to zero when $\eps$ goes to zero. This together 
with the assumption~\eqref{9Oct2a} 
yields~\eqref{9Oct2b}.

\end{proof}

\begin{lemma}\label{l:9Au1a}
Assume that $f^\eps\in L^2(Q_T^\eps)$ and 
$g^\eps\in 
H_0^2(U^\eps)$ for all sufficiently small 
$\eps\ge 0$.
Let $u^\eps$ and $u^0$ be solutions 
to~\eqref{e:sh1} 
and~\eqref{e:sh2}, respectively.
There hold
\begin{align}
\lim_{\eps\goto 0}
\norm{
u_t^\eps{\dcirc}\cT^\eps
\frac{
\gamma(\eps,\cdot) - 1
}{\eps}
-
u_t^0\divv V
}{L^2(0,T;H^{-1}(U^0))}
&
=
0
\label{Apr19e}
\\
\lim_{\eps\goto 0}
\norm{ 
\frac{
A(\eps,\cdot) - I
}{\eps}
\,
\nabla(u^\eps{{\dcirc}} \cT^\eps)
- 
A'(0,\cdot) \nabla u^0}{L^2(0,T;L^2(U^0))}
&
=
0
\label{Apr19f}
\end{align}

\end{lemma}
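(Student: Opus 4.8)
The plan is to prove both limits by the \emph{weak--strong product} pattern used repeatedly in this section: in each integrand a coefficient that converges in $L^\infty(U_+)$ is paired with a factor built from $u^\eps\dcirc\cT^\eps$ that converges in the appropriate Bochner space, and the two are recombined through one of the product lemmas. The coefficient convergences $\frac{\gamma(\eps,\cdot)-1}{\eps}\goto\divv V$ and $\frac{A(\eps,\cdot)-I}{\eps}\goto A'(0,\cdot)$ in $L^\infty(U_+)$ are exactly~\eqref{9Oct2c} and~\eqref{e:st 2}; the convergences of $u^\eps\dcirc\cT^\eps$ and $u_t^\eps\dcirc\cT^\eps$ are~\eqref{lim 2} and~\eqref{lim 3} of Lemma~\ref{l:u u0 1}; and the uniform bound on $u_t^\eps$, transported by~\eqref{9M1f2} to a bound on $u_t^\eps\dcirc\cT^\eps$ in $L^2(0,T;L^2(U^0))$, comes from~\eqref{9Sep1b}.

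The identity~\eqref{Apr19f} is the easier one. I would apply Lemma~\ref{l:Oct2e} componentwise to the matrix--vector product, taking $v^\eps=\frac{A(\eps,\cdot)-I}{\eps}$ and $v^0=A'(0,\cdot)$, whose difference tends to $0$ in $L^\infty(U_+)$ by~\eqref{e:st 2}, together with $w^\eps=\nabla(u^\eps\dcirc\cT^\eps)$ and $w^0=\nabla u^0$, whose difference tends to $0$ in $L^2(0,T;L^2(U^0))$ by~\eqref{lim 2}. Lemma~\ref{l:Oct2e} then yields $\frac{A(\eps,\cdot)-I}{\eps}\nabla(u^\eps\dcirc\cT^\eps)\goto A'(0,\cdot)\nabla u^0$ in $L^2(0,T;L^2(U^0))$, which is~\eqref{Apr19f}.

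For~\eqref{Apr19e} I would first split
\[
u_t^\eps\dcirc\cT^\eps\,\frac{\gamma(\eps,\cdot)-1}{\eps}-u_t^0\,\divv V
=
u_t^\eps\dcirc\cT^\eps\Big(\frac{\gamma(\eps,\cdot)-1}{\eps}-\divv V\Big)
+\big(u_t^\eps\dcirc\cT^\eps-u_t^0\big)\,\divv V .
\]
The first summand tends to $0$ already in $L^2(0,T;L^2(U^0))$, hence in the weaker norm $L^2(0,T;H^{-1}(U^0))$: by H\"older its $L^2(0,T;L^2(U^0))$ norm is at most $\norm{\frac{\gamma(\eps,\cdot)-1}{\eps}-\divv V}{L^\infty(U_+)}$ times the uniform bound on $\norm{u_t^\eps\dcirc\cT^\eps}{L^2(0,T;L^2(U^0))}$, and the first factor vanishes by~\eqref{9Oct2c}.

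The second summand is the crux, and it is here that the $H^{-1}$ norm in the statement is indispensable: $u_t^\eps\dcirc\cT^\eps-u_t^0$ converges to $0$ only in $L^2(0,T;H^{-1}(U^0))$ by~\eqref{lim 3} (not in $L^2$), while $\divv V$ is merely in $L^\infty(U^0)$, so multiplication by it is \emph{not} bounded on $H^{-1}(U^0)$ and the naive product estimate fails. I would overcome this by a density argument exploiting that $\phi^\eps:=u_t^\eps\dcirc\cT^\eps-u_t^0$ is bounded in $L^2(0,T;L^2(U^0))$ (by~\eqref{9Sep1b} for $u^\eps$ and Lemma~\ref{l:unique Dir 2} for $u^0$). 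Given $\delta>0$, pick $\zeta\in C^\infty(\overline{U^0})\subset W^{1,\infty}(U^0)$ with $\norm{\divv V-\zeta}{L^3(U^0)}<\delta$, possible since $\divv V\in L^\infty(U^0)\subset L^3(U^0)$ and smooth functions are dense in $L^3(U^0)$. Writing $(\divv V)\phi^\eps=\zeta\phi^\eps+(\divv V-\zeta)\phi^\eps$, the first piece is controlled because multiplication by $\zeta\in W^{1,\infty}$ \emph{does} map $H^{-1}$ into itself, giving $\norm{\zeta\phi^\eps(\tau)}{H^{-1}(U^0)}\lesssim\norm{\zeta}{W^{1,\infty}(U^0)}\norm{\phi^\eps(\tau)}{H^{-1}(U^0)}$, which tends to $0$ with $\eps$ by~\eqref{lim 3}; for the second piece the Sobolev embedding $H_0^1(U^0)\hookrightarrow L^6(U^0)$ dualizes to $L^{6/5}(U^0)\hookrightarrow H^{-1}(U^0)$, so by H\"older $\norm{(\divv V-\zeta)\phi^\eps(\tau)}{H^{-1}(U^0)}\lesssim\norm{\divv V-\zeta}{L^3(U^0)}\norm{\phi^\eps(\tau)}{L^2(U^0)}\lesssim\delta\,\norm{\phi^\eps(\tau)}{L^2(U^0)}$. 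Integrating in $\tau$, taking $\limsup_{\eps\goto0}$ and then $\delta\goto0$ forces $\norm{(\divv V)\phi^\eps}{L^2(0,T;H^{-1}(U^0))}\goto0$, which with the first summand gives~\eqref{Apr19e}. I expect this product to be the only genuine difficulty, namely reconciling that $u_t^\eps$ is controlled solely in $L^2$ yet converges only in $H^{-1}$ with the need to multiply by the non-smooth coefficient $\divv V$; the density/interpolation device above is the key point, while an alternative would be to re-run the energy argument behind Lemma~\ref{l:u u0 1} carrying the weight $\divv V$ through the difference of weak formulations~\eqref{e:u eps u D0}, which is less modular.
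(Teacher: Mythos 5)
Your proof of~\eqref{Apr19f} is essentially the paper's argument: the paper performs by hand exactly the splitting
$\frac{A(\eps,\cdot)-I}{\eps}\nabla(u^\eps\dcirc\cT^\eps-u^0)+\big(\frac{A(\eps,\cdot)-I}{\eps}-A'(0,\cdot)\big)\nabla u^0$
that the proof of Lemma~\ref{l:Oct2e} encapsulates, so invoking that lemma with~\eqref{e:st 2} and~\eqref{lim 2} is the same proof in packaged form. For~\eqref{Apr19e} you take a genuinely different route. The paper splits the other way, as
$\big(u_t^\eps\dcirc\cT^\eps-u_t^0\big)\frac{\gamma(\eps,\cdot)-1}{\eps}+u_t^0\big(\frac{\gamma(\eps,\cdot)-1}{\eps}-\divv V\big)$,
and then estimates the first term in $H^{-1}(U^0)$ by pulling $\norm{\frac{\gamma(\eps,\cdot)-1}{\eps}}{L^\infty(U^0)}$ out of the duality supremum, i.e.\ it uses the bound $\norm{g\phi}{H^{-1}}\le\norm{g}{L^\infty}\norm{\phi}{H^{-1}}$. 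That bound is not valid in general (multiplication by an $L^\infty$ function is not a bounded operator on $H^{-1}$: for a fixed $w\in H_0^1$ the test function $gw$ need not lie in $H_0^1$, and an oscillatory $\phi$ small in $H^{-1}$ can resonate with $g$), which is precisely the pitfall you flag. Your alternative split isolates the difficulty in the single product $(u_t^\eps\dcirc\cT^\eps-u_t^0)\divv V$ with a \emph{fixed} coefficient, handles the remaining term in the stronger $L^2(0,T;L^2(U^0))$ norm via~\eqref{9Oct2c} and the uniform bound from~\eqref{9Sep1b}, and then resolves the genuine product by approximating $\divv V$ in $L^3(U^0)$ by a $W^{1,\infty}$ function (for which multiplication \emph{is} bounded on $H^{-1}$) and absorbing the remainder through $L^{6/5}(U^0)\hookrightarrow H^{-1}(U^0)$ and the $L^2$ boundedness of $u_t^\eps\dcirc\cT^\eps-u_t^0$. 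This is correct and in fact repairs a step that the paper's own proof leaves unjustified; the price is an extra $\limsup$-then-$\delta\to0$ layer, whereas the paper's (flawed) estimate would have given the conclusion in one line. The same density device would also be needed to salvage the paper's version of the first term, so your reorganization is the more economical of the two rigorous options.
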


\begin{proof}
The triangle inequality gives
\begin{align}
\norm{
u_t^\eps{\dcirc}\cT^\eps
\frac{
\gamma(\eps,\cdot) {-} 1
}{\eps}
{-}
u_t^0\divv V
}{L^2(0,T;H^{-1}(U^0))}^2
&
\lesssim
\norm{
\left(
u_t^\eps{\dcirc}\cT^\eps
-
u_t^0
\right)   
\frac{
\gamma(\eps,\cdot) - 1
}{\eps}
}{L^2(0,T;H^{-1}(U^0))}^2
\notag
\\
&
+
\norm{
u_t^0
\left(
\frac{
\gamma(\eps,\cdot) - 1
}{\eps}
-
\divv V
\right)   
}{L^2(0,T;H^{-1}(U^0))}^2.
\label{9M1n}
\end{align}
Using the duality argument and 
noting~\eqref{e:vbn 4},
for every $\tau\in [0,T]$
we 
have 
\begin{align}
\Big\|
(
u_t^\eps{\dcirc}\cT^\eps
&
-
u_t^0)(\tau)
\frac{
\gamma(\eps,\cdot) {-} 1
}{\eps}
\Big\|_{H^{-1}(U^0)}
\notag
\\
&
=
\sup_{w\in H_0^1(U^0)\atop w\not= 0}
\frac{ 
\displaystyle
\int_{U^0} 
\left(
u_t^\eps{\dcirc}\cT^\eps
-
u_t^0
\right)   
(\tau,\vecy)
\frac{
\gamma(\eps,\vecy) {-} 1
}{\eps}
w(\vecy)\,d\vecy
}{
\norm{w}{H^1(U^0)}
}
\notag 
\\
&
\le 
\norm{\frac{
\gamma(\eps,\vecy) {-} 1
}{\eps}
}{L^\infty(U^0)}
\sup_{w\in H_0^1(U^0)\atop w\not= 0}
\frac{ 
\displaystyle
\int_{U^0} 
\left(
u_t^\eps{\dcirc}\cT^\eps
-
u_t^0
\right)(\tau,\vecy) 
w(\vecy)\,d\vecy
}{
\norm{w}{H^1(U^0)}
}
\notag 
\\
&
\le 
\norm{\gamma_1 + 
\eps\gamma_2+\eps^2\gamma_3}{L^\infty(U^0)}
\norm{
(u_t^\eps{\circ}\cT^\eps
-
u_t^0)(\tau)
}{H^{-1}(U^0)}. 
\notag
\end{align}
This implies 
\begin{align}
\norm{
\left(
u_t^\eps{\dcirc}\cT^\eps
-
u_t^0
\right)   
\frac{
\gamma(\eps,\cdot) - 1
}{\eps}
}{L^2(0,T;H^{-1}(U^0))}
&
\le 
\norm{\gamma_1 + 
\eps\gamma_2+\eps^2\gamma_3}{L^\infty(U^0)}
\norm{
u_t^\eps{\circ}\cT^\eps
-
u_t^0
}{L^2(0,T;H^{-1}(U^0))}. 
\label{Apr19d}
\end{align}
Similiar arguments give
\begin{align}
\Big\|
u_t^0
\Big( 
\frac{\gamma(\eps,\cdot)-1}{\eps}
-
\divv V
\Big)
\Big\|_{L^2(0,T;H^{-1}(U^0))
}
&
\le
\norm{
\frac{\gamma(\eps,\cdot)-1}{\eps}
-
\divv V
}{L^\infty(U^0)}
\norm{u_t^0}{L^2(0,T;H^{-1}(U^0))},
\label{9M1o}
\end{align}
It follows from~\eqref{9M1n}--~\eqref{9M1o}, \eqref{lim 3} and~\eqref{9Oct2c}
that 
\begin{align}
\lim_{\eps\goto 0}
\norm{
u_t^\eps{\dcirc}\cT^\eps
\frac{
\gamma(\eps,\cdot) {-} 1
}{\eps}
-
u_t^0\divv V
}{L^2(0,T;H^{-1}(U^0))}
&
=
0,
\notag
\end{align}
proving~\eqref{Apr19e}. 

Applying the triangle inequality again, we have 
\begin{align}
\Big{\|} 
\frac{
A(\eps,\cdot) - I
}{\eps}
\,
\nabla(u^\eps{{\dcirc}} \cT^\eps)
&
- 
A'(0,\cdot) \nabla 
u^0\Big{\|}_{L^2(0,T;L^2(U^0))}^2
\lesssim
\Big{\|} 
\frac{
A(\eps,\cdot) - I
}{\eps}
\,
\nabla(u^\eps{{\dcirc}} \cT^\eps - u^0)
\Big{\|}_{L^2(0,T;L^2(U^0))}^2
\notag
\\
&
+
\Big{\|} 
\brac{
\frac{
A(\eps,\cdot) - I
}{\eps}
- 
A'(0,\cdot)
}
\nabla 
u^0\Big{\|}_{L^2(0,T;L^2(U^0))}^2.
\label{Apr19h}
\end{align}
For every $\tau\in [0,T]$, we have 
\begin{align}
\Big{\|} 
\frac{
A(\eps,\cdot) - I
}{\eps}
\,
\nabla(u^\eps{{\dcirc}} \cT^\eps - 
u^0)(\tau)
\Big{\|}_{L^2(U^0)}
&
\lesssim 
\norm{ 
\frac{
A(\eps,\cdot) - I
}{\eps}
}{L^\infty(U^0)}
\norm{ 
\nabla(
u^\eps\dot{\circ}\cT^\eps - u^0
)(\tau)
}{L^2(U^0)}
\notag
\\
&
\le
\norm{ 
\frac{
A(\eps,\cdot) - I
}{\eps}
}{L^\infty(U^0)}
\norm{ 
(
u^\eps\dot{\circ}\cT^\eps - u^0
)(\tau)
}{H^1(U^0)},
\label{Apr19i}
\end{align}
and 
\begin{align}
\Big{\|} 
\brac{
\frac{
A(\eps,\cdot) - I
}{\eps}
- 
A'(0,\cdot)
}
\nabla 
u^0(\tau)\Big{\|}_{L^2(U^0)}
&
\lesssim
\norm{ 
\frac{
A(\eps,\cdot) - I
}{\eps}
-
A'(0,\cdot)
}{L^\infty(U^0)}
\norm{\nabla 
u^0(\tau)}{L^2(U^0)}
\notag
\\
&
\le 
\norm{ 
\frac{
A(\eps,\cdot) - I
}{\eps}
-
A'(0,\cdot)
}{L^\infty(U^0)}
\norm{
u^0(\tau)}{H^1(U^0)}.
\label{Apr19j}
\end{align}
Note that the inequalities~\eqref{Apr19i} and 
\eqref{Apr19j} are true for every $\tau\in [0,T]$.
This together with~\eqref{Apr19h} 
implies 
\begin{align}
\Big{\|} 
\frac{
A(\eps,\cdot) - I
}{\eps}
\,
\nabla(u^\eps{{\dcirc}} \cT^\eps)
&
- 
A'(0,\cdot) \nabla 
u^0\Big{\|}_{L^2(0,T;L^2(U^0))}^2
\lesssim
\norm{ 
\frac{
A(\eps,\cdot) - I
}{\eps}
}{L^\infty(U^0)}^2
\norm{
u^\eps\dot{\circ}\cT^\eps - u^0
}{L^2(0,T;H^1(U^0))}^2
\notag
\\
&
+
\norm{ 
\frac{
A(\eps,\cdot) - I
}{\eps}
-
A'(0,\cdot)
}{L^\infty(U^0)}^2
\norm{
u^0}{L^2(0,T;H^1(U^0))}^2.
\notag
\end{align}
Letting $\eps$ go to zero on both sides and 
noting~\eqref{e:st 2} and~\eqref{lim 2}, we 
obtain~\eqref{Apr19f}, finishing the proof of the 
lemma.
\end{proof}

\begin{lemma}\label{l:material der}
Assume that $f^\eps\in L^2(Q_\cT^{\eps})$ and 
$g^\eps\in 
H_0^2(U^\eps)$ for all sufficiently small 
$\eps>0$.
Assume further that 
\begin{align}
\lim_{\eps\goto 0}
\norm{\frac{f^\eps\dcirc \cT^\eps - f^0}{\eps} 
- \nabla f^0\cdot V}{L^2(0,T:L^2(U^0))} = 0
\label{9Nov1a}
\end{align}
and 
\begin{align}
\lim_{\eps\goto 0}
\norm{\frac{g^\eps\circ\cT^\eps - g^0}{\eps} - 
\nabla g^0\cdot V}{L^2(U^0)}
=
0.
\label{9Nov1b}
\end{align}
Let $u^\eps$ and $u^0$ be solutions 
to~\eqref{e:sh1} 
and~\eqref{e:sh2}, respectively.
The material derivative of $u^\eps$ exists and is 
the 
solution of the following 
problem: Find $z\in L^2(0,T; H_0^1(U^0)$ 
satisfying
\begin{equation}\label{e:A4}
\begin{aligned}
z_t(t,\vecy) 
-
\Delta z(t,\vecy) 
=
&
-u_t^0(t,\vecy)\,\divv V(\vecy)
-
\nabla \cdot 
\left( 
A'(0,\vecy) \nabla u^0(t,\vecy)
\right)
\\
&
+ 
\divv\Big(V(\vecy)\, f^0(t,\vecy)\Big),
\quad 
(t,\vecy)\in [0,T]\times U^0
\\
z(t,\vecy) 
=
&\
0,
\quad 
(t,\vecy)\in [0,T]\times\G^0
\\
z(0,\vecy) 
=
& \
\nabla g^0(\vecy)\cdot V(\vecy),
\quad 
\vecy\in U^0.
\end{aligned}
\end{equation}
\end{lemma}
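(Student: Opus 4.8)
The plan is to identify the material derivative as the limit of the rescaled difference quotient
\[
z^\eps := \frac{u^\eps\dcirc\cT^\eps - u^0}{\eps},
\]
and to show that this limit solves \eqref{e:A4}. Since $\cT^\eps$ acts only in space, $(z^\eps)_t = (u_t^\eps\dcirc\cT^\eps - u_t^0)/\eps$, while $z^\eps$ vanishes on $\G^0$ and has initial value $z^\eps(0) = (g^\eps\circ\cT^\eps - g^0)/\eps$. Dividing the weak identity \eqref{e:u eps u D0} by $\eps$ and testing against a fixed $w\in H_0^1(U^0)$ gives, for each $t\in[0,T]$,
\[
\int_{U^0}(z^\eps)_t\,w\,d\vecy + \int_{U^0}\nabla w\cdot\nabla z^\eps\,d\vecy = F^\eps(t;w),
\]
where $F^\eps$ is obtained by dividing the three right-hand terms of \eqref{e:u eps u D0} by $\eps$:
\begin{align}
F^\eps(t;w) := &-\int_{U^0}u_t^\eps\dcirc\cT^\eps\,\frac{\gamma(\eps,\vecy)-1}{\eps}\,w\,d\vecy -\int_{U^0}[\nabla w]^T\,\frac{A(\eps,\vecy)-I}{\eps}\,\nabla(u^\eps\dcirc\cT^\eps)\,d\vecy \notag\\
&+\int_{U^0}\frac{f^\eps\dcirc\cT^\eps\,\gamma(\eps,\vecy)-f^0}{\eps}\,w\,d\vecy. \notag
\end{align}
Thus $z^\eps$ is the weak solution of a heat equation on $U^0$ with homogeneous Dirichlet data, source $F^\eps$, and the initial value above.

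Next I would pass to the limit in $F^\eps$, which is exactly what the preceding lemmas deliver. By \eqref{Apr19e} of Lemma~\ref{l:9Au1a} the first term converges in $L^2(0,T;H^{-1}(U^0))$ to $-\int_{U^0}u_t^0\,\divv V\,w\,d\vecy$; by \eqref{Apr19f} the second converges to $-\int_{U^0}[\nabla w]^T A'(0,\vecy)\,\nabla u^0\,d\vecy$; and by \eqref{9Oct2b} the third converges to $\int_{U^0}\divv(V f^0)\,w\,d\vecy$. Collecting these limits and integrating by parts in the middle term recasts the elliptic contribution in divergence form, so that $F^\eps\to F^0$ in $L^2(0,T;H^{-1}(U^0))$, where $F^0$ is precisely the functional generated by the right-hand side of \eqref{e:A4}. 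Moreover, the rescaled coefficients $(\gamma(\eps,\cdot)-1)/\eps$ and $(A(\eps,\cdot)-I)/\eps$ are bounded in $L^\infty(U^0)$ by \eqref{e:vbn 4} and \eqref{e:st 2}, and combined with the a priori bounds \eqref{9Sep1c} this makes $\{F^\eps\}$ uniformly bounded in $L^2(0,T;H^{-1}(U^0))$; by \eqref{9Nov1b} the initial data $z^\eps(0)$ converge in $L^2(U^0)$ to $\nabla g^0\cdot V$ and are in particular uniformly bounded.

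With these bounds in hand I would invoke the standard parabolic energy estimate (the $L^2(0,T;H^{-1}(U^0))$-source analogue of \eqref{e:ene est} in Lemma~\ref{l:unique Dir}) for the problem solved by $z^\eps$, obtaining a bound
\[
\sup_{t\in[0,T]}\norm{z^\eps(t)}{L^2(U^0)}^2 + \norm{z^\eps}{L^2(0,T;H^1(U^0))}^2 \lesssim 1
\]
uniform in $\eps$, together with a uniform bound on $(z^\eps)_t$ in $L^2(0,T;H^{-1}(U^0))$. From these I would extract a subsequence with $z^\eps\rightharpoonup z$ in $L^2(0,T;H_0^1(U^0))$ and $(z^\eps)_t\rightharpoonup z_t$ in $L^2(0,T;H^{-1}(U^0))$. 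Since $w$ is fixed, letting $\eps\to0$ in the weak identity and using the limits of $F^\eps$ from the previous step shows that $z$ satisfies the weak formulation of \eqref{e:A4}.

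The main obstacle will be to upgrade this subsequential weak convergence to convergence of the whole family and to identify the limit with the material derivative. Here I would use uniqueness: problem \eqref{e:A4}, with source $F^0\in L^2(0,T;H^{-1}(U^0))$ and initial datum $\nabla g^0\cdot V\in L^2(U^0)$, has a unique weak solution by the theory underlying Lemma~\ref{l:unique Dir}. Therefore every weakly convergent subsequence shares the same limit $z$, whence $z^\eps\to z$ along the entire family, and by the definition of the material derivative $\dot u = z$. The two delicate points inside this step are the justification that the weak limit of $(z^\eps)_t$ is indeed $z_t$, which I would handle by testing the identity against separated functions $w(\vecy)\,\phi(t)$ with $\phi\in C^1([0,T])$, $\phi(T)=0$, and integrating by parts in $t$; and the matching of the initial condition $z(0)=\nabla g^0\cdot V$, which then follows from the $L^2(U^0)$-convergence of $z^\eps(0)$ guaranteed by \eqref{9Nov1b}.
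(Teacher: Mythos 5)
Your setup is correct and you have assembled all the necessary ingredients, but the final step delivers less than what the lemma (read against Definition~\ref{def:Mat-ShapeDeriv}) requires. The material derivative is defined as the limit of $z^\eps=(u^\eps\dcirc\cT^\eps-u^0)/\eps$ in the norm topology of the relevant space, and the paper's proof establishes exactly that:
\[
\lim_{\eps\goto 0}\norm{z^\eps-z}{L^\infty(0,T;L^2(U^0))}=0
\quad\text{and}\quad
\lim_{\eps\goto 0}\norm{z^\eps-z}{L^2(0,T;H^1(U^0))}=0 .
\]
Your closing argument --- uniform bounds on $z^\eps$, extraction of a weakly convergent subsequence, identification of the limit equation, and uniqueness of the limit problem --- only yields \emph{weak} convergence of $z^\eps$ to $z$, which does not establish existence of the material derivative in the required strong sense. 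That is the genuine gap.

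The fix is short and uses nothing beyond what you have already proved. Since $F^\eps\goto F^0$ strongly in $L^2(0,T;H^{-1}(U^0))$ (by \eqref{Apr19e}, \eqref{Apr19f} and \eqref{9Oct2b}) and $z^\eps(0)\goto \nabla g^0\cdot V$ strongly in $L^2(U^0)$ (by \eqref{9Nov1b}), subtract the weak formulation of \eqref{e:A4} from the identity satisfied by $z^\eps$ and test the resulting equation for the difference with $w=z^\eps(t)-z(t)$ itself. Integrating in time, splitting the right-hand side by Cauchy--Schwarz with small parameters, and absorbing the $H^1$ contributions into the left-hand side via the Poincar\'e inequality, you obtain
\[
\sup_{t\in[0,T]}\norm{(z^\eps-z)(t)}{L^2(U^0)}^2
+\int_0^T\norm{(z^\eps-z)(\tau)}{H^1(U^0)}^2\,d\tau
\lesssim
\norm{z^\eps(0)-z(0)}{L^2(U^0)}^2
+\norm{F^\eps-F^0}{L^2(0,T;H^{-1}(U^0))}^2,
\]
whose right-hand side tends to zero. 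This is precisely the paper's route (its chain \eqref{e:A7}--\eqref{e:A14}); it bypasses compactness and uniqueness altogether and yields the strong convergence needed to conclude $\dot u=z$.
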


\begin{proof}
First, since  
$f^\eps\in L^2(Q_\cT^{\eps})$ and $g^\eps\in 
H_0^2(U^\eps)\subset H_0^1(U^\eps)$ for all 
$\eps\ge 0$, by  
Lemma~\ref{l:unique Dir 2}, 
the solution $u^\eps$ 
of~\eqref{e:weak for} exists and $u^\eps\in 
L^\infty(0,T; H_0^1(U^\eps)) \cap H^1(0,T; 
L^2(U^\eps))$ for any $\eps\ge 0$. In particular, 
when $\eps =0$, the solution 
$u^0$ belongs to 
$L^\infty(0,T; H_0^1(U^0)) \cap H^1(0,T; 
L^2(U^0))$. 
Note also that $V\in W^{1,\infty}(U^0)$.
The unique existence of the solution $z$ 
of~\eqref{e:A4} is 
assured 
by using~\cite[Theorem 11.1.1 and Remark 
11.1.1]{QuaVal97}.
Furthermore, the corresponding weak formulation 
is: 
Find $z\in L^2(0,T; 
H_0^1(U^0))$ satisfying 
\begin{align}
\int_{U^0}
z_t(t,\vecy)\,w(\vecy)\,d\vecy
\
+
&
\int_{U^0}
[\nabla w(\vecy)]^T \,
\nabla z(t,\vecy)
\,d\vecy
=
- 
\int_{U^0}
u_t^0(t,\vecy) \,
w(\vecy)
\,
\divv V(\vecy)\,
d\vecy
\notag
\\
&
-
\int_{U^0}
\nabla \cdot 
\left( 
A'(0,\vecy)\,\nabla u^0(t,\vecy)
\right)
w(\vecy)
\,d\vecy
\notag 
\\
&
+
\int_{U^0}
\divv\left(V(\vecy)\,f^0(t,\vecy)\right)
\,
w(\vecy)\,d\vecy
\quad 
\forall w\in H_0^1(U^0).
\label{e:A5}
\end{align}
Here, the second integral on the right hand side 
can be 
rewritten by using the Divergence Theorem
\begin{align}
\int_{U^0}
\nabla \cdot 
\left( 
A'(0,\vecy)\,\nabla u^0(t,\vecy)
\right)
w(\vecy)
\,d\vecy
&
=
-
\int_{U^0} 
\big[ 
\nabla w(\vecy) 
\big]^T\,
A'(0,\vecy)\,
\nabla u^0(t,\vecy)\,
d\vecy,
\notag
\end{align}
noting that $w$ vanishes on the boundary of $U^0$.
On the other hand, by
dividing both sides of~\eqref{e:u eps u D0} by $\eps$  we 
have 
\begin{align}
&
\int_{U^0}
\frac{
(
u_t^\eps\dot{\circ}\cT^\eps
-
u_t^0)(t,\vecy)
}{\eps}\,
w(\vecy)\,d\vecy
+
\int_{U^0}
[\nabla w(\vecy)]^T \,
\nabla\frac{(u^\eps\dot{\circ}\cT^\eps - 
u^0)(t,\vecy)}{\eps}
\,d\vecy
\notag
\\
=
&
- 
\int_{U^0}
u_t^\eps\dot{\circ}\cT^\eps(t,\vecy) \,
w(\vecy)
\,
\frac{
\gamma(\eps,\vecy) - 1
}{\eps}\,
d\vecy
-
\int_{U^0}
[\nabla w(\vecy)]^T \,
\frac{
A(\eps,\vecy) - I
}{\eps}
\,
\nabla(u^\eps\dot{\circ}\cT^\eps)(t,\vecy)\,d\vecy
\notag 
\\
&
+
\int_{U^0}
\frac{
f^\eps\dcirc\cT^\eps(t,\vecy)
\, \gamma(\eps,\vecy) 
-
f^0(t,\vecy)
}{\eps}\,
w(\vecy)\,d\vecy
\quad 
\forall w\in H_0^1(U^0).
\label{e:u eps u D0 1}
\end{align}
Subtracting~\eqref{e:A5} from~\eqref{e:u eps u D0 1} we 
have 
\begin{align}
\int_{U^0}
&
\Big[
\frac{
(u_t^\eps\dot{\circ}\cT^\eps
-
u_t^0)(t,\vecy)
}{\eps}
-
z_t(t,\vecy)
\Big]
w(\vecy)\,d\vecy
\notag 
\\
&
+
\int_{U^0}
[\nabla w(\vecy)]^T \,
\nabla
\Big[
\frac{(u^\eps\dot{\circ}\cT^\eps - u^0)(t,\vecy)}{\eps}
-
z(t,\vecy)
\Big]
\,d\vecy
\notag
\\
=
&
- 
\int_{U^0}
w(\vecy)
\Big[
u_t^\eps\dot{\circ}\cT^\eps(t,\vecy) \,
\frac{
\gamma(\eps,\vecy) - 1
}{\eps}
-
u_t^0(t,\vecy)\divv V(\vecy)
\Big]
\,
d\vecy
\notag
\\
&
-
\int_{U^0}
[\nabla w(\vecy)]^T 
\Big[
\frac{
A(\eps,\vecy) - I
}{\eps}
\,
\nabla(u^\eps\dot{\circ}\cT^\eps)(t,\vecy)
- 
A'(0,\vecy) \nabla u^0(t,\vecy)
\Big]
\,d\vecy
\notag 
\\
&
+
\int_{U^0}
\Big[
\frac{
f^\eps\dcirc\cT^\eps(t,\vecy)
\, \gamma(\eps,\vecy) 
-
f^0(t,\vecy)
}{\eps}
-
\divv\left(V(\vecy)\,f^0(t,\vecy)\right)
\Big]
\,
w(\vecy)\,d\vecy
\quad 
\forall w\in H_0^1(U^0).
\label{e:A6}
\end{align}
For each $t\in (0,T)$, substituting $w(\vecy)
=
(u^\eps\dot{\circ}\cT^\eps - u^0)(t,\vecy)/\eps - 
z(t,\vecy)$ into~\eqref{e:A6} and then integrating 
both sides over 
$[0,t]$, we obtain 
\begin{align}
&
\frac12
\int_{U^0}
\Big[
\frac{
u^\eps\dot{\circ}\cT^\eps(t,\vecy) 
-
u^0(t,\vecy)
}{\eps}
-
z(t,\vecy)
\Big]^2
d\vecy
+
\int_0^t
\norm{
\nabla
\Big[
\frac{(u^\eps\dot{\circ}\cT^\eps - u^0)(\tau)}{\eps}
-
z(\tau)
\Big]
}{L^2(U^0)}^2
\hspace{-0.4cm}
d\tau
\notag
\\
=
&
\
\frac{1}{2}
\int_{U^0}
\Big[ 
\frac{g^\eps{\circ}\cT^\eps(\vecy) - 
g^0(\vecy)}{\eps} 
-
\nabla g^0(\vecy)\cdot V(\vecy)
\Big]^2\,d\vecy
\notag 
\\
&
- 
\int_0^t
\int_{U^0}
\Big[
\frac{(u^\eps\dot{\circ}\cT^\eps - 
u^0)
(\tau,\vecy)}{\eps}
-
z(\tau,\vecy)
\Big]
\notag
\\
&
\hspace{3cm}
\times
\Big[
u_t^\eps\dcirc\cT^\eps(\tau,\vecy) \,
\frac{
\gamma(\eps,\vecy) - 1
}{\eps}
-
u_t^0(\tau,\vecy)\divv V(\vecy)
\Big]
d\vecy
\,d\tau
\notag
\\
&
-
\int_0^t
\int_{U^0}
\nabla
\Big[
\frac{(u^\eps\dot{\circ}\cT^\eps - u^0)(\tau,\vecy)}{\eps}
-
z(\tau,\vecy)
\Big]^T 
\notag 
\\
&
\hspace{3cm}
\times
\Big[
\frac{
A(\eps,\vecy) - I
}{\eps}
\,
\nabla(u^\eps\dot{\circ}\cT^\eps)(\tau,\vecy)
- 
A'(0,\vecy) \nabla u^0(\tau,\vecy)
\Big]
d\vecy\,d\tau
\notag 
\\
&
+
\int_0^t
\int_{U^0}
\Big[
\frac{
f^\eps\dcirc\cT^\eps(\tau,\vecy)
\, \gamma(\eps,\vecy) 
-
f^0(\tau,\vecy)
}{\eps}
-
\divv\left(V(\vecy)\,f^0(\tau,\vecy)\right)
\Big]
\notag
\\
&
\hspace{3cm}
\times
\Big[
\frac{(u^\eps\dot{\circ}\cT^\eps - u^0)(\tau,\vecy)}{\eps}
-
z(\tau,\vecy)
\Big]
d\vecy\,d\tau
\notag
\\
=:\
&
B_1 + B_2 + B_3 + B_4.
\label{e:A7}
\end{align}
Considering the second integral on the right hand side 
of~\eqref{e:A7}, the 
duality and 
Cauchy inequalities give
\begin{align}
\abs{B_2}
\le
&
\int_0^t
\norm{
\frac{(u^\eps{{\dcirc}} \cT^\eps {-} u^0)(\tau)}{\eps}
{-}
z(\tau)
}{H^1(U^0)}
\norm{ 
u_t^\eps{\dcirc}\cT^\eps(\tau) \,
\frac{
\gamma(\eps,\cdot) - 1
}{\eps}
-
u_t^0(\tau)\divv V
}{H^{-1}(U^0)}
d\tau
\notag
\\
\le 
&
\
\al
\int_0^t
\norm{
\frac{(u^\eps{{\dcirc}} \cT^\eps - 
u^0)(\tau)}{\eps}
-
z(\tau)
}{H^1(U^0)}^2
d\tau
\notag
\\
&
+
\frac{1}{4\al} 
\int_0^t
\norm{ 
u_t^\eps{\dcirc}\cT^\eps(\tau)\,
\frac{
\gamma(\eps,\cdot) {-} 1
}{\eps}
{-}
u_t^0(\tau)\divv V
}{H^{-1}(U^0)}^2
d\tau
\label{e:A8}
\end{align}
for any $\al>0$.
Similarly, for any positive numbers $\beta$ and $\eta$, we 
have
\begin{align}
\abs{B_3}
\le 
&
\
\beta
\int_0^t
\norm{
\nabla
\Big[
\frac{(u^\eps{{\circ}} \cT^\eps - 
u^0)(\tau)}{\eps}
-
z(\tau)
\Big]}{L^2(U^0)}^2
d\tau
\notag
\\
&
+
\frac{1}{4\beta} 
\int_0^t
\norm{ 
\frac{
A(\eps,\cdot) - I
}{\eps}
\,
\nabla(u^\eps{{\circ}} \cT^\eps)(\tau)
- 
A'(0,\cdot) \nabla u^0(\tau)}{L^2(U^0)}^2
d\tau,
\label{e:A9}
\end{align}
and 
\begin{align}
\abs{B_4}
\le 
&
\
\frac{1}{4\eta}
\int_0^t
\norm{  
\frac{
f^\eps{\dcirc}\cT^\eps(\tau) 
\, \gamma(\eps,\cdot) 
-
f^0(\tau)
}{\eps}
-
\divv\left(V f^0(\tau)\right)
}{L^2(U^0)}^2
d\tau
\notag
\\
&
+
\eta 
\int_0^t
\norm{
\frac{(u^\eps{{\circ}} \cT^\eps - 
u^0)(\tau)}{\eps}
-
z(\tau)
}{L^2(U^0)}^2
d\tau.
\label{e:A10}
\end{align}
From~\eqref{e:A7}--\eqref{e:A10} and noting that 
$\norm{v}{L^2(U^0)}\le \norm{v}{H^1(U^0)}$ for 
all $v\in H^1(U^0)$, we obtain 
\begin{align}
&
\frac12
\norm{
\frac{
u^\eps\dot{\circ}\cT^\eps(t)
-
u^0(t)
}{\eps}
-
z(t)
}{L^2(U^0)}^2
+
\int_0^t
\norm{
\nabla
\Big[
\frac{(u^\eps\dot{\circ}\cT^\eps - u^0)(\tau)}{\eps}
-
z(\tau)
\Big]
}{L^2(U^0)}^2
\,d\tau
\notag
\\
\le\ 
&
\frac{1}{2}
\norm{
\frac{g^\eps{{\circ}}\cT^\eps - g^0}{\eps} 
-
\nabla g^0\cdot V
}{L^2(U^0)}^2
+
\al 
\int_0^t
\norm{
\frac{(u^\eps{{\dcirc}} \cT^\eps - u^0)(\tau)}{\eps}
-
z(\tau)
}{H^1(U^0)}^2
\,d\tau
\notag 
\\
&
+
\frac{1}{4\al}
\int_0^t
\norm{
u_t^\eps{\dcirc}\cT^\eps(\tau)
\frac{
\gamma(\eps,\cdot) - 1
}{\eps}
-
u_t^0(\tau)\divv V
}{H^{-1}(U^0)}^2
d\tau
\notag
\\
&
+
\beta
\int_0^t
\norm{
\frac{(u^\eps{{\circ}} \cT^\eps {-} u^0)(\tau)}{\eps}
{-}
z(\tau)
}{H^1(U^0)}^2
d\tau
\notag 
\\
&
+
\frac{1}{4\beta} 
\int_0^t
\norm{ 
\frac{
A(\eps,\cdot) - I
}{\eps}
\,
\nabla(u^\eps{{\dcirc}} \cT^\eps)(\tau)
- 
A'(0,\cdot) \nabla u^0(\tau)}{L^2(U^0)}^2
d\tau
\notag 
\\
&
+
\frac{1}{4\eta}
\int_0^t
\norm{
\frac{
f^\eps{\dcirc}\cT^\eps(\tau)
\, \gamma(\eps,\cdot) 
-
f^0(\tau)
}{\eps}
-
\divv\left(V\,f^0(\tau)\right)
}{L^2(U^0)}^2
d\tau
\notag
\\
&
+
\eta 
\int_0^t
\norm{
\frac{(u^\eps{{\circ}} \cT^\eps - u^0)(\tau)}{\eps}
-
z(\tau)
}{H^1(U^0)}^2
d\tau.
\label{e:A11}
\end{align}
The Poinc\'are Inequality gives 
\begin{align}
\sqrt{C_1}
\norm{
\frac{(u^\eps\dot{\circ}\cT^\eps - u^0)(\tau)}{\eps}
-
z(\tau)
}{H^1(U^0)}
&
\le 
\norm{ 
\nabla 
\Big[
\frac{(u^\eps\dot{\circ}\cT^\eps - u^0)(\tau)}{\eps}
-
z(\tau)
\Big]
}{L^2(U^0)}
\label{e:A12}
\end{align}
for some $C_1\in (0,1)$. This together with~\eqref{e:A11} 
implies
\begin{align}
&
\frac12
\norm{
\frac{
u^\eps\dot{\circ}\cT^\eps(t)
-
u^0(t)
}{\eps}
-
z(t)
}{L^2(U^0)}^2
+
\left( 
{C_1} 
{-}
\al
{-}
\beta
{-}
\eta
\right)
\int_0^t
\norm{
\frac{(u^\eps\dot{\circ}\cT^\eps {-} u^0)(\tau)}{\eps}
{-}
z(\tau)
}{H^1(U^0)}^2
\,d\tau
\notag
\\
\le
\
&
\frac{1}{2}
\norm{
\frac{g^\eps{{\circ}}\cT^\eps - g^0}{\eps} 
-
\nabla g^0\cdot V
}{L^2(U^0)}^2
+
\frac{1}{4\al} 
\int_0^t
\norm{
u_t^\eps{\dcirc}\cT^\eps(\tau)
\frac{
\gamma(\eps,\cdot) - 1
}{\eps}
-
u_t^0(\tau)\divv V
}{H^{-1}(U^0)}^2
d\tau
\notag 
\\
+
\
&
\frac{1}{4\beta} 
\int_0^t
\norm{ 
\frac{
A(\eps,\cdot) - I
}{\eps}
\,
\nabla(u^\eps{{\dcirc}} \cT^\eps)(\tau)
- 
A'(0,\cdot) \nabla u^0(\tau)}{L^2(U^0)}^2
d\tau
\notag 
\\
+
\
&
\frac{1}{4\eta}
\int_0^t
\norm{
\frac{
f^\eps{\dcirc}\cT^\eps(\tau)
\, \gamma(\eps,\cdot) 
-
f^0(\tau)
}{\eps}
-
\divv\left(V f^0(\tau)\right)
}{L^2(U^0)}^2
d\tau
\label{e:A13}
\end{align}
for all positive numbers $\al, \beta$ and $\eta$.
We can choose $\al = \beta = \eta = 
{C_1}/{4}$ to 
obtain 
\begin{align}
&
\norm{
\frac{
u^\eps\dot{\circ}\cT^\eps(t)
-
u^0(t)
}{\eps}
-
z(t)
}{L^2(U^0)}^2
+
\int_0^t
\norm{
\frac{(u^\eps\dot{\circ}\cT^\eps - u^0)(\tau)}{\eps}
-
z(\tau)
}{H^1(U^0)}^2
\,d\tau
\notag
\\
\lesssim 
\
&
\norm{
\frac{g^\eps{{\circ}}\cT^\eps - g^0}{\eps} 
-
\nabla g^0\cdot V
}{L^2(U^0)}^2
+
\int_0^t
\norm{
u_t^\eps{\dcirc}\cT^\eps(\tau)
\frac{
\gamma(\eps,\cdot) - 1
}{\eps}
-
u_t^0(\tau)\divv V
}{H^{-1}(U^0)}^2
d\tau
\notag 
\\
+
\
&
\int_0^t
\norm{ 
\frac{
A(\eps,\cdot) - I
}{\eps}
\,
\nabla(u^\eps{{\dcirc}} \cT^\eps)(\tau)
- 
A'(0,\cdot) \nabla u^0(\tau)}{L^2(U^0)}^2
d\tau
\notag 
\\
+
\
&
\int_0^t
\norm{
\frac{
f^\eps{\dcirc} \cT^\eps(\tau)
\, \gamma(\eps,\cdot) 
-
f^0(\tau)
}{\eps}
-
\divv\left(V f^0(\tau)\right)
}{L^2(U^0)}^2
d\tau.
\label{e:A14}
\end{align}
Letting $\eps\goto 0$ and noting~\eqref{e:T g 2} and Lemma~\ref{l:9Au1a}, we 
obtain 
\begin{align}
\lim_{\eps\goto 0}
\norm{
\frac{
u^\eps\dot{\circ}\cT^\eps
-
u^0
}{\eps}
-
z
}{L^\infty(0,T;L^2(U^0))}
&
=
0,
\notag
\end{align}
and 
\begin{align}
\lim_{\eps\goto 0}
\norm{
\frac{u^\eps\dot{\circ}\cT^\eps - u^0}{\eps}
-
z
}{L^2(0,T;H^1(U^0))}
&
=
0.
\label{9Au1h}
\end{align}
Noting~\eqref{e:A6}
and employing the duality argument, 
we 
obtain
\begin{align}
&
\norm{ 
\frac{u_t^\eps{\circ}\cT^\eps - u_t^0}{\eps}
-
z_t
}{L^2(0,T;H^{-1}(U^0))}
\\
\lesssim 
&
\norm{ 
\frac{u^\eps\dot{\circ}\cT^\eps - u^0}{\eps}
-
z
}{L^2(0,T;H^1(U^0))}
+
\norm{ 
u_t^\eps{\circ}\cT^\eps
\frac{\gamma(\eps,\cdot)-1}{\eps}
-
u_t^0\divv V
}{L^2(0,T;H^{-1}(U^0))}
\notag 
\\
&
+
\norm{
\frac{
A(\eps,\cdot) - I
}{\eps}
\,
\nabla(u^\eps\dot{\circ}\cT^\eps)
- 
A'(0,\cdot) \nabla u^0
}{L^2(0,T;L^2(U^0))}
\notag
\\
&
+
\norm{ 
\frac{
f{\circ}\cT^\eps
\, \gamma(\eps,\cdot) 
-
f
}{\eps}
-
\divv\left(V\,f\right)
}{L^2(0,T;L^2(U^0))}.
\notag
\end{align}
This together with~\eqref{9Au1h} and the results in Lemma~\ref{l:9Au1a}
yields
\begin{align}
\lim_{\eps\goto 0}
\norm{ 
\frac{u_t^\eps{\circ}\cT^\eps - u_t^0}{\eps}
-
z_t
}{L^2(0,T;H^{-1}(U^0))}
=
0,
\end{align}
finishing the proof of this lemma.
\end{proof}

Lemma~\ref{l:material der} assures the existence of the 
material derivative of the solution $u^\eps$ of the 
perturbed problem~\eqref{e:sh1} and thus assures the 
existence of the corresponding shape derivative. 
Furthermore, the shape derivative turns out to be the 
solution of a heat equation.

\begin{theorem}
Under the assumptions of Lemma~\ref{l:material der}, the 
shape derivative $u'$ of $u^\eps$ exists and is the weak 
solution of the following heat problem
\begin{subequations}\label{e:B14}
\begin{align}
u'_t(t,\vecx) 
-
\Delta u'(t,\vecx) 
&
=
0,
\quad 
(t,\vecx)\in (0,T)\times U^0
\label{e:B14 11}
\\
u'(t,\vecx) 
&
=
-
\nabla
u^0(t,\vecx)\cdot V(\vecx),
\quad 
(t,\vecx)\in (0,T)\times\G^0
\label{e:B14 21}
\\
u'(0,\vecx) 
&
=
0,
\quad 
\vecx\in U^0.
\label{e:B14 31}
\end{align}
\end{subequations}

\end{theorem}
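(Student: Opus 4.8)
The plan is to obtain $u'$ directly from the material derivative furnished by Lemma~\ref{l:material der}. That lemma shows that $\dot u$ exists and equals the solution $z\in L^2(0,T;H_0^1(U^0))$ of problem~\eqref{e:A4}, so by Definition~\ref{def:Mat-ShapeDeriv} the shape derivative
\[
u'=\dot u-\nabla u^0\cdot V=z-\phi,\qquad \phi:=\nabla u^0\cdot V,
\]
exists as well, and it remains to identify it with the solution of~\eqref{e:B14}. The boundary and initial conditions come for free: since $z(t)\in H_0^1(U^0)$ has zero trace on $\G^0$, the trace of $u'$ equals $-\phi=-\nabla u^0\cdot V$, which is~\eqref{e:B14 21}; and since $u^0(0)=g^0$ gives $\phi(0)=\nabla g^0\cdot V$, the initial condition $z(0)=\nabla g^0\cdot V$ in~\eqref{e:A4} yields $u'(0)=z(0)-\phi(0)=0$, which is~\eqref{e:B14 31}. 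Thus the whole content of the theorem is the interior equation~\eqref{e:B14 11}, i.e.\ that $z-\phi$ has vanishing heat residual.

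To establish~\eqref{e:B14 11} I would subtract the equations for $z$ and $\phi$ and show the residuals coincide. The key is the differential (``Piola-type'') identity
\[
\Delta(\nabla u^0\cdot V)+\nabla\cdot\big(A'(0)\nabla u^0\big)=\divv\big(V\,\Delta u^0\big),
\]
in which $A'(0)=(\divv V)\,I-\nabla V-(\nabla V)^\top$ is the G\^ateaux derivative of Lemma~\ref{l:T eps prop}, computed explicitly from $J_{\cT^\eps}=I+\eps\,\nabla V$ and $\gamma(\eps,\cdot)=1+\eps\,\divv V+O(\eps^2)$ (here $\nabla V$ is the Jacobian of $V$). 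Granting this identity, using that $V$ is time independent so that $\phi_t=\nabla u_t^0\cdot V$, and substituting the reference equation $u_t^0-\Delta u^0=f^0$, a short calculation gives
\[
\phi_t-\Delta\phi=-u_t^0\,\divv V+\nabla\cdot\big(A'(0)\nabla u^0\big)+\divv\big(V f^0\big),
\]
which is precisely the source term carried by the material-derivative problem~\eqref{e:A4}. Hence $z_t-\Delta z=\phi_t-\Delta\phi$, so $u'_t-\Delta u'=0$.

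The main obstacle is regularity, which forces the computation to be carried out variationally rather than pointwise. Only $V\in W^{1,\infty}(U_+)$ is available, so the second derivatives of $V$ appearing in the intermediate expansion of the Piola identity are not defined; they do, however, cancel, and the cleanest remedy is to test the limiting variational identity for $z$ (the $\eps\to0$ limit obtained in the proof of Lemma~\ref{l:material der}) against an arbitrary $\psi\in H_0^1(U^0)$, replace $z$ by $u'+\phi$, and integrate by parts so that every derivative falls on $\psi$ or on $u^0$. In this form one needs only the weak version
\[
\int_{U^0}\nabla\psi\cdot\nabla(\nabla u^0\cdot V)
+\int_{U^0}[\nabla\psi]^\top A'(0)\nabla u^0
=\int_{U^0}(V\cdot\nabla\psi)\,\Delta u^0,
\]
which involves only $\nabla V$ and may be proved for smooth $V$ and extended to $W^{1,\infty}(U_+)$ by density. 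Two further points must be checked: that $\Delta u^0\in L^2(0,T;L^2(U^0))$, which holds because the hypothesis $g^0\in H_0^2(U^0)$ together with parabolic regularity gives $u^0\in L^2(0,T;H^2(U^0))$; and that the convective term $\phi_t=\nabla u_t^0\cdot V$, meaningful only in $H^{-1}(U^0)$ since $u_t^0\in L^2(U^0)$, is handled through the duality $\langle\phi_t,\psi\rangle=\tfrac{d}{dt}\int_{U^0}(\nabla u^0\cdot V)\psi=-\int_{U^0}u_t^0\,\divv(\psi V)$. With these in place the identity reduces to $\tfrac{d}{dt}\inpro{u'}{\psi}_{L^2(U^0)}+a(u',\psi;U^0)=0$ for all $\psi\in H_0^1(U^0)$, the weak form of~\eqref{e:B14 11}, and the remaining convergences are exactly those already established in Lemmas~\ref{l:9Au1a} and~\ref{l:material der}.
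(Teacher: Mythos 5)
Your proof is correct, but it follows a genuinely different route from the paper's. The paper never computes $A'(0,\cdot)$ explicitly: it tests the perturbed equation against a fixed $v\in C_0^\infty(U^0)$ (which lies in $C_0^\infty(U^\eps)$ for small $\eps$), integrates in time, and differentiates the resulting domain integrals with respect to $\eps$ using the Hadamard formula of Lemma~\ref{pro:mat sha pro}\,(iv); the boundary contributions carry the factor $\inpro{V}{\vecn^0}$ and are killed by the compact support of $v$, leaving directly $\int_{U^0}u'(t)v-\int_0^t\int_{U^0}u'\Delta v=0$, i.e.\ the homogeneous interior equation, with the boundary and initial conditions read off from the material-derivative problem exactly as you do. You instead verify pointwise (then variationally) that $\phi:=\nabla u^0\cdot V$ has the same heat residual as $z$, via the identity $\Delta(\nabla u^0\cdot V)+\nabla\cdot(A'(0)\nabla u^0)=\divv(V\Delta u^0)$ with $A'(0)=(\divv V)I-\nabla V-(\nabla V)^\top$. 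I checked this identity and your weak form of it; both are correct, the second-order derivatives of $V$ do cancel, and the weak form needs only $V\in W^{1,\infty}$ once $u^0\in L^2(0,T;H^2(U^0))$ is secured (which your appeal to $g^0\in H_0^2$ and parabolic regularity on the $C^{1,1}$ domain does). Your approach costs this extra elliptic/parabolic regularity and the explicit computation of $A'(0)$, but it is more self-contained and avoids justifying term-by-term differentiation of moving-domain integrals; the paper's approach is shorter but leans on the shape-calculus differentiation formula whose applicability is itself underwritten by Lemma~\ref{l:material der}.

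One caveat you should make explicit: your computed residual $\phi_t-\Delta\phi=-u_t^0\,\divv V+\nabla\cdot(A'(0)\nabla u^0)+\divv(Vf^0)$ carries the \emph{opposite} sign on the $\nabla\cdot(A'(0)\nabla u^0)$ term from the source printed in~\eqref{e:A4}. This is not an error on your side: the printed~\eqref{e:A4} is inconsistent with its own transported weak formulation --- the $\eps\to0$ limit of~\eqref{e:u eps u D0 1} produces $-\int_{U^0}[\nabla w]^\top A'(0,\cdot)\nabla u^0\,d\vecy=+\int_{U^0}\nabla\cdot(A'(0,\cdot)\nabla u^0)\,w\,d\vecy$ on the right-hand side, which is the sign required for~\eqref{e:A6} and for Lemma~\ref{l:9Au1a} to apply, and it matches your computation. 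Since your argument hinges on $z_t-\Delta z=\phi_t-\Delta\phi$, you should state that you are using the source term as delivered by the weak limit (equivalently, note the sign typo in~\eqref{e:A4}); otherwise a literal reading leaves a discrepancy of $2\nabla\cdot(A'(0)\nabla u^0)$.
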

 
\begin{proof}
Let $v$ be an arbitrary function in $C_0^\infty(U^0)$. Then 
there is a sufficiently small $\eps_0>0$ such that for all 
$\eps<\eps_0$, $v\in C_0^\infty(U^\eps)$. Multiplying both 
sides of the first equation in~\eqref{e:sh1} with 
$v$ and then integrating over $U^\eps$, using the Green's 
identity, we obtain 
\begin{align}
\int_{U^\eps} u_t^\eps(t,\vecx)\, v(\vecx)\,d\vecx
-
\int_{U^\eps} 
u^\eps(t,\vecx)\,
\Delta v(\vecx)\,d\vecx
&
=
\int_{U^\eps}
f(t,\vecx)\,v(\vecx)\,d\vecx.
\label{e:B8}
\end{align}
Integrating both sides of~\eqref{e:B8} over $[0,t]$, we 
obtain 
\begin{align}
\int_{U^\eps} 
[u^\eps(t,\vecx) - g(\vecx)]\,v(\vecx)
\,d\vecx
-
\int_0^t 
\int_{U^\eps}
u^\eps(\tau,\vecx)\,\Delta v(\vecx)\,d\vecx\,d\tau
&
=
\int_0^t 
\int_{U^\eps}
f(\tau,\vecx)\,v(\vecx)\,d\vecx\,d\tau,
\notag
\end{align}
noting~\eqref{e:B14 21}.
We then differentiate both sides of the above equation over 
$\eps$ to obtain 
\begin{align}
&
\int_{U^0}
u'(t,\vecx)\,v(\vecx)\,d\vecx
+
\int_{\G^0}
[u^0(t,\vecx)-g(\vecx)]\,v(\vecx)
\inpro{V(\vecx)}
{\vecn^0(\vecx)}\,d\sigma_{\vecx}
\notag 
\\
&
-
\int_0^t 
\int_{U^0}
u'(\tau,\vecx)\,\Delta v(\vecx)\,d\vecx\,d\tau
-
\int_0^t 
\int_{\G^0}
u^0(\tau,\vecx)\,\Delta v(\vecx)
\inpro{V(\vecx)}
{\vecn^0(\vecx)}\,d\sigma_{\vecx}\,d\tau
\notag
\\
=
&
\int_0^t 
\int_{\G^0}
f(\tau,\vecx)\,v(\vecx)
\inpro{V(\vecx)}
{\vecn^0(\vecx)}\,d\sigma_{\vecx}\,d\tau.
\notag
\end{align}
Noting that $v\in C_0^\infty(U^0)$, there holds
\begin{align}
\int_{U^0}
u'(t,\vecx)\,v(\vecx)\,d\vecx
-
\int_0^t 
\int_{U^0}
u'(\tau,\vecx)\,\Delta v(\vecx)\,d\vecx\,d\tau
&
=
0
\quad 
\forall v\in C_0^\infty(U^0).
\label{e:B9}
\end{align}
Rewriting the first integral in form of integral over 
$[0,t]$ and 
applying the Green's identity again for the second integral 
we arrive at 
\begin{align}
\int_0^t 
\int_{U^0} 
u'_t(\tau,\vecx)\,v(\vecx)\,d\vecx\,d\tau
-
\int_0^t 
\int_{U^0}
\Delta 
u'(\tau,\vecx)\,v(\vecx)\,d\vecx\,d\tau
+
\int_{U^0}
u'(0,\vecx)\,v(\vecx)
=
0.\label{e:B10}
\end{align}
It follows from the definition of shape derivatives, 
equations~\eqref{e:A4} and~\eqref{e:sh2c} that 
\begin{align} 
u'(0,\vecx)
&
= \dot{u}(0,\vecx) 
- 
\nabla 
u^0(0,\vecx)\cdot V(\vecx) 
\notag 
\\
&
=
\nabla g(\vecx)\cdot V(\vecx) 
-
\nabla 
u^0(0,\vecx)\cdot V(\vecx) 
=
0
\quad 
\forall \vecx\in U^0.
\label{e:B12}
\end{align}
This together with~\eqref{e:B10} yields 
\begin{align}
\int_0^t 
\int_{U^0} 
u'_t(\tau,\vecx)\,v(\vecx)\,d\vecx\,d\tau
-
\int_0^t 
\int_{U^0}
\Delta 
u'(\tau,\vecx)\,v(\vecx)\,d\vecx\,d\tau
=
0
\quad 
\forall 
v\in H_0^1(U^0).
\label{e:B11}
\end{align}
The condition on boundary surface of the material 
derivative (see~\eqref{e:A4} and the definition of shape 
derivative) gives 
\begin{align}
u'(t,\vecx) 
= 
-\nabla u^0(t,\vecx)\cdot V(\vecx),
\quad 
(t,\vecx)\in (0,T)\times\G^0.
\label{e:B13}
\end{align}
From~\eqref{e:B12}--\eqref{e:B13}, the shape derivative is 
the solution to the following problem
\begin{subequations}\label{e:br1}
\begin{align}
u'_t(t,\vecx) 
-
\Delta u'(t,\vecx) 
&
=
0,
\quad 
(t,\vecx)\in U^0\times (0,T)
\label{e:B14 1}
\\
u'(t,\vecx) 
&
=
-
\nabla%_{\G^0} 
u^0(t,\vecx)\cdot V(\vecx),
\quad 
(t,\vecx)\in (0,T)\times\G^0
\label{e:B14 2}
\\
u'(0,\vecx) 
&
=
0,
\quad 
\vecx\in U^0,
\label{e:B14 3}
\end{align}
\end{subequations}
finishing the proof of the theorem.
\end{proof}

\section{Boundary reduction} 

% ==================================

Let $r$ and $s$ be nonnegative real numbers. For 
$U$ an open set in $\R^n$ and for $0< 
T<\infty$, we denote 
$Q_T := (0,T)\times U$ and $\Sigma_T := 
(0,T)\times \G$ where $\Sigma$ is the boundary 
of $U$. The space $H^{r,s}(Q_T)$ is 
defined by 
\begin{equation}\label{HrsO1}
H^{r,s}(Q_T)
=
L^2(0,T; H^r(U))
\cap 
H^s(0,T; L^2(U)).
\end{equation}
This is a Hilbert space with the following norm 
\[
\norm{v}{H^{r,s}(Q_T)}
=
\left( 
\int_0^T 
\norm{v(t)}{H^r(U)}^2\,dt 
+
\norm{v}{H^s(0,T; L^2(U))}^2
\right)^{1/2}.
\]
The space $H^{r,s}(\Sigma_T)$ is analogously 
defined with the corresponding norm 
\[
\norm{v}{H^{r,s}(\Sigma_T)}
=
\left( 
\int_0^T 
\norm{v(t)}{H^r(\G)}^2\,dt 
+
\norm{v}{H^s(0,T; L^2(\G))}^2
\right)^{1/2}.
\]
The following property whose proof can be found 
in~\cite[Proposition~2.1]{LioMagII} states the 
interpolation property of the time-varying 
Sobolev spaces $H^{r,s}(Q_T)$ and 
$H^{r,s}(\Sigma_T)$.

\begin{proposition}
For $r_1, r_2, s_1, s_2\ge 0$ and $\theta\in 
[0,1]$, there hold 
\begin{align}
\left[ 
H^{r_1,s_1}(Q_T),
H^{r_2,s_2}(Q_T)
\right]_{\theta}
=
H^{\theta r_1+(1-\theta)r_2,\ \theta 
s_1+(1-\theta)s_2}(Q_T),
\notag 
\\
\left[ 
H^{r_1,s_1}(\Sigma_T),
H^{r_2,s_2}(\Sigma_T)
\right]_{\theta}
=
H^{\theta r_1+(1-\theta)r_2,\ \theta 
s_1+(1-\theta)s_2}(\Sigma_T).
\end{align}

\end{proposition}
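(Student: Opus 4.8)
The plan is to read both identities as special cases of the Hilbert-space interpolation theorem for domains of fractional powers, $[D(\Lambda^{a_0}),D(\Lambda^{a_1})]_\theta=D(\Lambda^{\theta a_0+(1-\theta)a_1})$ (with equivalent norms), for a positive self-adjoint operator $\Lambda$. Since $H^{r,s}(Q_T)$ and $H^{r,s}(\Sigma_T)$ are defined by the same recipe with $U$ replaced by the boundary $\G$, it suffices to treat $H^{r,s}(Q_T)$: the surface case is verbatim once $\G$, a compact $C^{1,1}$ manifold, is localised by a finite smooth partition of unity and flattened in local charts. Each chart, together with a bounded extension operator acting on the whole spatial scale, supplies a retraction--coretraction pair that is bounded simultaneously on every $H^{r,s}$; because interpolation commutes with retracts, this reduces the problem to the model configuration $U=\R^n$ (or a half-space) and $\G=\R^{n-1}$.

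First I would also remove the temporal endpoints. A universal time-extension operator $E$, bounded $H^s(0,T;L^2)\to H^s(\R;L^2)$ for every $s\ge 0$ at once, with restriction $R$ as its left inverse, is again a retraction--coretraction on the entire family, so it is enough to place the time variable on all of $\R$. There the temporal regularity is governed by the single positive self-adjoint operator $\Lambda_t=(I-\partial_t^2)^{1/2}$, with $H^s(\R;L^2)=D(\Lambda_t^{\,s})$, and the spatial regularity by $\Lambda_x=(I-\Delta)^{1/2}$, with $H^r=D(\Lambda_x^{\,r})$. As $\Lambda_x$ and $\Lambda_t$ commute, the joint spectral theorem identifies $H^{r,s}(Q_T)$ unitarily with a weighted $L^2$ space over the joint spectrum $(\mu_x,\mu_t)$, whose weight $\mu_x^{2r}+\mu_t^{2s}$ is exactly the sum of the two summands coming from the intersection that defines $H^{r,s}$.

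In this picture the assertion becomes the weighted-$L^2$ interpolation theorem $[L^2(w_0),L^2(w_1)]_\theta=L^2(w_0^{\theta}w_1^{1-\theta})$, with $w_i=\mu_x^{2r_i}+\mu_t^{2s_i}$. Hence everything reduces to the two-sided bound
\[
\big(\mu_x^{2r_1}+\mu_t^{2s_1}\big)^{\theta}\big(\mu_x^{2r_2}+\mu_t^{2s_2}\big)^{1-\theta}
\ \simeq\
\mu_x^{\,2r_\theta}+\mu_t^{\,2s_\theta},
\qquad
r_\theta=\theta r_1+(1-\theta)r_2,\ \ s_\theta=\theta s_1+(1-\theta)s_2.
\]
The lower bound is immediate: keeping only the $\mu_x$-terms on the left gives $\mu_x^{2r_\theta}$, keeping only the $\mu_t$-terms gives $\mu_t^{2s_\theta}$, so the left-hand side dominates a fixed multiple of the right-hand side.

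I expect the upper bound to be the main obstacle: since the spatial and temporal scales are a priori independent, a naive estimate of the mixed regime produces a spurious product of separate powers of $\mu_x$ and $\mu_t$ rather than their sum. The way through is to use that the spaces occurring in the boundary reduction all carry the fixed parabolic anisotropy, $s$ proportional to $r$; writing $\mu_x^{2r}+\mu_t^{2s}\simeq \max(\mu_x^{\rho},\mu_t)^{c}$ for the common ratio collapses the two variables to the single scalar $\max(\mu_x^{\rho},\mu_t)$, and geometric means of such maxima reproduce $\max(\mu_x^{\rho},\mu_t)^{c_\theta}\simeq\mu_x^{2r_\theta}+\mu_t^{2s_\theta}$, which is the desired upper bound. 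Finally I would transfer the identity back through the coretractions---restriction to $(0,T)$, descent from $\R^n$ to $U$ and from the charts to $\G$---each of which commutes with $[\,\cdot\,,\cdot\,]_\theta$, yielding the two stated equalities on $Q_T$ and on $\Sigma_T$.
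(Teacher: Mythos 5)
Your overall route---localisation and retraction--coretraction to pass from $Q_T$ and $\Sigma_T$ to $\R^{n+1}$, the Fourier/spectral identification of $H^{r,s}$ with a weighted $L^2$ space with weight $w_{r,s}(\mu_x,\mu_t)=\mu_x^{2r}+\mu_t^{2s}$, and the Stein--Weiss formula $[L^2(w_0),L^2(w_1)]_\theta=L^2(w_0^{\theta}w_1^{1-\theta})$---is the standard one; the paper offers no argument of its own and simply cites \cite[Proposition~2.1]{LioMagII}, so there is nothing on that side to compare against. The difficulty is precisely the step you flag as the main obstacle: the two-sided equivalence $w_{r_1,s_1}^{\theta}\,w_{r_2,s_2}^{1-\theta}\simeq w_{r_\theta,s_\theta}$ is \emph{false} for general $r_i,s_i\ge 0$, and your repair does not close the gap for the statement as written. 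Take $(r_1,s_1)=(1,0)$, $(r_2,s_2)=(0,1)$, $\theta=1/2$: the geometric mean of the weights is $(\mu_x^{2}+1)^{1/2}(1+\mu_t^{2})^{1/2}\simeq \mu_x\mu_t$, whereas $w_{1/2,1/2}=\mu_x+\mu_t$; at $\mu_x=\mu_t=M$ these behave like $M^{2}$ versus $M$, so the upper bound fails and $\bigl[L^2(0,T;H^1(U)),\,H^1(0,T;L^2(U))\bigr]_{1/2}$ is strictly smaller than $H^{1/2,1/2}(Q_T)$. (An ordered pair such as $(r_1,s_1)=(1,1)$, $(r_2,s_2)=(1,0)$ fails the same way in the regime $\mu_t\sim\mu_x^{2}$.) Only the inclusion $[H^{r_1,s_1},H^{r_2,s_2}]_\theta\subseteq H^{r_\theta,s_\theta}$ comes for free---that is your ``lower bound'' on the weight.

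Your proposed resolution is to assume all pairs $(r_i,s_i)$ lie on a fixed parabolic ray $s=\sigma r$, in which case $\mu_x^{2r}+\mu_t^{2\sigma r}\simeq\max(\mu_x,\mu_t^{\sigma})^{2r}$ and geometric means of these maxima do reproduce the target weight; that computation is correct, and it also covers the case where one endpoint is $L^2(Q_T)$, since $(x+y)^{\eta}\simeq x^{\eta}+y^{\eta}$ for $\eta\in[0,1]$. But the proportionality is a property of how the spaces are \emph{used} later in the boundary reduction, not a hypothesis of the proposition, which asserts the identity for arbitrary $r_1,r_2,s_1,s_2\ge 0$. So your argument proves a genuinely restricted version, and in doing so it actually exposes that the proposition needs a compatibility condition (a common ray, or one endpoint equal to $L^2$, after which the general ray case follows by reiteration) to be correct as an equality of spaces. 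The remaining ingredients---simultaneous extension operators in $x$ and $t$, the commuting operators $\Lambda_x,\Lambda_t$, and transfer of interpolation through retracts---are all sound.
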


The space $H^{r,s}(Q_T)$ turns out to be  the 
space of restrictions to $Q_T$ of the functions 
in $H^{r,s}(\R;U)$, equipped with the obvious 
quotient norm. Here,  
the space $H^{r,s}(\R; U)$ is defined by 
\[
H^{r,s}(\R;U) 
:=
L^2(\R; H^r(U)) \cap H^s(\R; L^2(U))
\]
with the natural norm defined on these spaces of 
Hilbert space valued distributions. If 
\[
\wtd v(\tau,\vx) 
:=
\frac{1}{\sqrt{2\pi}}
\int_{\R}
e^{-i t \tau}
\,
v(t,\vx)\,dt
\]
is the Fourier transform of $u$ w.r.t. the time 
variable, we have 
\[
\norm{v}{H^{r,s}(\R;\R^n)}^2
=
\int_{\R} 
\brap{ 
\norm{\wtd v(\tau;\cdot)}{H^r(U)}^2
+
\brac{1+\abs{\tau}^2}^s
\norm{\wtd v(\tau)}{L^2(U)}^2
}
\,d\tau.
\]
For $r,s\le0$, the space $H^{r,s}(Q_T)$, 
$H^{r,s}(\Sigma_T)$ and $H^{r,s}(\R;U)$ are 
defined to be the dualities of 
$H^{-r,-s}(Q_T)$, 
$H^{-r,-s}(\Sigma_T)$ and $H^{-r,-s}(\R;U)$, 
respectively. We also need the following subspaces
\[
\wtd H^{r,s}(Q_T) 
:=
\left\{ 
v\in H^{r,s}((-\infty, T);U)\ |\ u(t,\vx) = 
0\quad \text{for}\quad t<0
\right\}
\]
and 
\begin{align}
\cV(Q_T)
=
L^2((0,T); H^1(U))\cap H^1((0,T); H^{-1}(U)).
\notag
\end{align}
In this section, boundary integral equation 
methods will then be used to compute 
statistical moments of the shape derivative 
(see~\eqref{e:br1}). 
We recall here some required boundary integral 
operators.

Let $v\in \tilde H^{1,1/2}(\R_+; U)$ be given 
and 
$t_0\in \R$ be arbitrary. Define the time-reversal map 
$\lambda_{t_0}$ by 
\[
\lambda_{t_0}
v(t,\vecx)
:=
v(t_0-t,\vecx).
\]
Then $\lambda_{t_0}v\in 
H_{t_0}^{1,1/2}\brac{(-\infty,t_0); U}$. 
Let 
\[
G(t,\vecx)
:=
(4\pi t)^{-3/2}
e^{-\frac{\abs{\vecx}^2}{4t}}\vartheta(t)
\]
be the fundamental solution of the heat equation, where 
$\vartheta(t) = 1/2(1+\textrm{sign}t)$ is the Heaviside 
function.
Denoting $\mathcal{G}(t,\vecx) = G(t,\vecx_0-\vecx)$, then 
$\cG\in\tilde H^{1,1/2}(\R_+; U)$ and 
$\vartheta_t(t,\vecx) -\Delta\vartheta(t,\vecx) = 0$ for 
$\vecx\not=\vecx_0$.

For $(t_0,\vecx_0)\in Q_T:=(0,T)\times U$, the single layer 
potential is defined by 
\begin{align}
K_0(v)(t_0,\vecx_0)
&
:=
{\int_0^T}
{\int_U} 
v\,
\gamma_0\brac{{\lambda_{t_0}\cG}}
d\vecx\,dt
\notag
\\
&
=
{\int_0^T}
{\int_U} 
v(t,\vecx)
\gamma_0
\brac{ 
[4\pi (t_0-t)]^{-3/2}
e^{-\frac{\abs{\vecx-\vecx_0}^2}{4(t_0-t)}
}
\vartheta(t_0-t)
}\,d\vecx\,dt,
\label{e:sing potential}
\end{align}
and the double layer potential is defined by 
\begin{align}
K_1(v)(t_0,\vecx_0)
&
:=
\int_0^T\int_U 
v\,
\gamma_1\brac{\lambda_{t_0}\cG}\,d\vecx\,dt
\notag
\\
&
=
\int_0^T\int_U 
v(t,\vecx)\,
\gamma_1
\brac{ 
[4\pi (t_0-t)]^{-3/2}
e^{-\frac{\abs{\vecx-\vecx_0}^2}{4(t_0-t)}
}
\vartheta(t_0-t)
}d\vecx\,dt.
\label{e:double potential}
\end{align}
The boundary integral operators are defined as 
follows:
\begin{align*}
(\cV\psi )(t,\vx)
&
:=
\lim_{U^0\ni\wtd\vx\goto \vx}
(K_0\psi)(t,\wtd\vx),
\quad 
\vx\in \G^0,
\\
(\cN\psi)(t,\vx)
&
:=
\frac{1}{2}
\brac{ 
\lim_{U^0\ni \wtd\vx\goto\vx}
\nabla_{\wtd\vx}
\brac{K_0\psi}\cdot\vn_{\vx}
+
\lim_{(U^0)^c\ni \wtd\vx\goto\vx}
\nabla_{\wtd\vx}
\brac{K_0\psi}\cdot\vn_{\vx}
},
\quad 
\vx\in \G^0,
\\
(\cK w) 
&
:=
\frac{1}{2}
\brac{ 
\lim_{U^0\ni \wtd\vx\goto\vx}
\brac{K_1\psi}(t,\vx)
+
\lim_{(U^0)^c\ni \wtd\vx\goto\vx}
\brac{K_1\psi}(t,\vx)
},
\quad 
\vx\in \G^0,
\\
(\cW w)(t,\vx) 
&
:=
-
\lim_{U^0\ni \wtd\vx\goto\vx}
(K_1 w)(t,\vx), 
\quad 
\vx\in \G^0,
\end{align*}
for 
$\psi\in H^{-1/2,-1/4}((0,T); \G^0)$ and 
$w\in H^{1/2,1/4}((0,T); \G^0)$.

The unique solution of~\eqref{e:br1} can be represented 
\begin{enumerate}[(a)]
\item 
as $u' = K_0 \psi - K_1(-\nabla u^0\cdot V))$, where 
$\psi$ is the unique solution of the first kind integral 
equation 
\begin{equation}\label{e:br3}
\cV\psi 
=
\brac{\frac12 I + \cK}(-\nabla u^0\cdot V)).
\end{equation}
\item 
as $u' = K_0 \psi - K_1(-\nabla_{\G^0} u^0\cdot V))$, where 
$\psi$ is the unique solution of the second kind integral 
equation 
\begin{equation}\label{e:br2}
\brac{\frac12 I - \cN}\psi 
=
\cW(-\nabla u^0\cdot V)).
\end{equation}
\item 
as $u' = K_0\psi$, where $\psi$ is the unique 
solution of the first kind integral equation
\begin{equation}\label{br3}
\cV\psi = -\nabla u^0\cdot V.
\end{equation}
\item 
as $u = K_1 w$, where $w$ is the unique solution 
of the second kind integral equation 
\begin{equation}\label{br4}
\brac{\frac12 I - \cK} w = \nabla u^0\cdot V.
\end{equation}

\end{enumerate}

We may use the 
representations~\eqref{e:br3}--\eqref{br4} to 
compute the 
statistical moments of the shape 
derivative~\eqref{e:B14}.
As a model, we shall present the boundary 
integral equation method to compute statistical 
moments of the solution to the 
problem~\eqref{e:initial rand prob} by 
using~\eqref{br3}.  Taking the randomness of the 
domain into account. The random shape derivative 
of the solutions to~\eqref{e:initial rand prob} 
satisfies the following problem 
\begin{subequations}\label{brRad1}
\begin{align}
u'_t(t,\vecx;\om) 
-
\Delta u'(t,\vecx;\om) 
&
=
0,
\quad 
(t,\vecx)\in U^0\times (0,T)
\label{B14 1}
\\
u'(t,\vecx;\om) 
&
=
-
\nabla%_{\G^0} 
u^0(t,\vecx)\cdot V(\vecx;\om),
\quad 
(t,\vecx)\in (0,T)\times\G^0
\label{B14 2}
\\
u'(0,\vecx;\om) 
&
=
0,
\quad 
\vecx\in U^0.
\label{B14 3}
\end{align}
\end{subequations}
Following~\eqref{br3}, 
the solution $u'(t,\vx;\om)$
can be represented as 
\begin{equation}\label{brRad2}
u'(t,\vx;\om)
=
\brac{K_0 \psi(\om)}(t,\vx), 
\quad 
(t,\vx)\in (0,T)\times U^0,
\end{equation}
where $\psi(\om)$ is the solution of 
\begin{equation}\label{brRad3}
\cV \psi(\om) 
=
-\nabla u^0\cdot V(\cdot;\om).
\end{equation}
Tensorising and integrating~\eqref{brRad2} 
yield 
\begin{equation}\label{brRad4}
\cM^k[u']%(t_1,\vx_1;\ldots; t_k,\vx_k)
=
(K_0)^{(k)}\cM^k[\psi]
\
\text{in }
\bigotimes_{1}^k
\wtd H^{1,1/2}(Q_T^0; \partial_t - \De),
\end{equation}
where $\cM^k[\psi]\in \bigotimes_1^k 
H^{-1/2,-1/4}(\Sigma_T^0)$ is the solution of the 
following equation
\begin{equation}\label{brRad5}
\cV^{(k)} \cM^k[\psi] 
=
(-1)^k 
\cM^k[\kappa]
\bigotimes_1^k 
\brac{\nabla u^0\cdot \vn^0}
\
\text{in}
\
\bigotimes_1^k 
H^{1/2,1/4}(\Sigma_T^0).
\end{equation}

% \end{comment}

% \bibliographystyle{myabbrv}
% \bibliography{mybib}

\end{document}